\definecolor{dblue}{rgb}{0,0,0.45}
\definecolor{red}{rgb}{0.7,0,0} 
\newtheorem{theorem}{Theorem}[section]
\newtheorem{lemma}[theorem]{Lemma}
\newtheorem{corollary}[theorem]{Corollary}
\newtheorem{proposition}[theorem]{Proposition}
\theoremstyle{definition}
\newtheorem{remark}[theorem]{Remark}
\newtheorem{definition}[theorem]{Definition}
\newtheorem{example}[theorem]{Example}
\theoremstyle{remark}
\newcommand{\N}{{\mathbb N}}
\newcommand{\R}{{\mathbb R}}
\newcommand{\cG}{{\mathcal G}}
\def\Rn{{\mathbb{R}^n}}
\def\A{\mathbb A}
\def\i{\infty}
\def\Nn{\mathbb{N}_0}
\def\Lploc{L^p_{\rm loc}(\mathbb{R}^n)}
\def\Lloc{L^1_{\rm loc}(\mathbb{R}^n)}
\DeclareMathOperator*{\supp}{supp}
\begin{document}

\begin{center}
\LARGE Generalized Hardy-Morrey spaces
\end{center}

\centerline{\large Ali Akbulut}
\centerline{\it Department of Mathematics, Ahi Evran University, Kirsehir, Turkey}
\centerline{aakbulut@ahievran.edu.tr}

\vspace{3mm}
\centerline{\large Vagif S. Guliyev}
\centerline{\it Department of Mathematics, Ahi Evran University, Kirsehir, Turkey}
\centerline{\it Institute of Mathematics and Mechanics, Baku, Azerbaijan}
\centerline{vagif@guliyev.com}

%

\vspace{3mm}

\centerline{\large Takahiro Noi}
\centerline{\it Department of Mathematics and Information Science,
Tokyo Metropolitan University}
\centerline{taka.noi.hiro@gmail.com}

\vspace{3mm}

\centerline{\large Yoshihiro Sawano}
\centerline{\it Department of Mathematics and Information Science, Tokyo Metropolitan University}
\centerline{yoshihiro-sawano@celery.ocn.ne.jp}

\date{}

\begin{abstract}
The generalized Morrey space was defined  independetly 
by T. Mizuhara 1991 and E. Nakai  in 1994.
Generalized Morrey space ${\mathcal M}_{p,\phi}({\mathbb R}^n)$
is equipped with a parameter $0<p<\infty$ and a function
$\phi:{\mathbb R}^n \times (0,\infty) \to (0,\infty)$.
Our experience shows that ${\mathcal M}_{p,\phi}({\mathbb R}^n)$
is easy to handle when $1<p<\infty$.
However, when $0<p \le 1$,
the function space ${\mathcal M}_{p,\phi}({\mathbb R}^n)$
is difficult to handle as many examples show.

The aim of this paper is twofold.
One of them is to propose a way to deal with
${\mathcal M}_{p,\phi}({\mathbb R}^n)$
for $0<p \le 1$.
One of them is to propose here a way
to consider the decomposition method of generalized Hardy-Morrey spaces.
We shall obtain some estimates for these spaces
about the Hardy-Littlewood maximal operator.
Especially, the vector-valued estimates obtained in the earlier papers
are refined. The key tool is the weighted Hardy operator.
Much is known about the weighted Hardy operator.
Another aim
is to propose here a way
 to consider the decomposition method of generalized Hardy-Morrey spaces.
Generalized Hardy-Morrey spaces emerged
from generalized Morrey spaces.
By means of the grand maximal operator
and the norm of generalized Morrey spaces,
we can define generalized Hardy-Morrey spaces.
With this culmination, we can easily refine the existing results.
In particular, our results complement
the one the 2014 paper by Iida, the third author and Tanaka;
there was a mistake there.
As an application, we consider bilinear estimates,
which is the \lq \lq so-called" Olsen inequality.
\end{abstract}

\noindent{\bf AMS Mathematics Subject Classification:} $~~$
42B20, 42B25, 42B35

\noindent{\bf Key words:}
generalized Hardy-Morrey spaces, atomic decomposition, maximal operators


\section{Introduction}
\noindent

In this paper,
we are concerned with generalized Hardy-Morrey spaces.
The generalized Morrey space ${\mathcal M}_{p,\phi}({\mathbb R}^n)$
is equipped with a function $\phi$ and a positive parameter $0<p<\infty$.
The generalized Morrey space ${\mathcal M}_{p,\phi}({\mathbb R}^n)$
was defined independetly 
by T. Mizuhara in 1991 \cite{Mi} and E. Nakai  in 1994 \cite{Nakai94}.
Although we can disprove $C^\infty_{\rm c}({\mathbb R}^n)$
is dense in ${\mathcal M}_{p,\phi}({\mathbb R}^n)$,
we are still able to develop
a theory of the function space
$H{\mathcal M}_{p,\phi}({\mathbb R}^n)$
called generalized Hardy-Morrey spaces.

Denote by ${\cG}_p$
the set of all the functions 
$\phi:{\mathbb R}^n \times (0,\infty) \to (0,\infty)$
decreasing in the second variable
such that $t\in(0,\infty) \mapsto t^{\frac{n}{p}}\phi(x,t)\in(0,\infty)$ is almost increasing
uniformly over the first variable $x$, so that there exists a constant $C>0$ such that
\[
\phi(x,r) \le \phi(x,s), \quad
C\phi(x,r)r^{n/p} \ge \phi(x,s)s^{n/p}
\]
for all $x \in {\mathbb R}^n$ and $0<s \le r<\infty$.
All \lq\lq cubes" in $\R^n$ are assumed
to have their sides parallel to the coordinate axes.
Denote by ${\mathcal Q}$ the set of all cubes.
For a cube $Q \in {\mathcal Q}$,
the symbol
$\ell(Q)$ stands for the side-length
of the cube $Q$;
$\ell(Q)\equiv|Q|^{\frac1n}$.
We denote by ${\mathcal Q}({\mathbb R}^n)$
the set of all cubes.
When we are given a cube $Q$,
we use the following abuse of notations
$\phi(Q) \equiv \phi(c(Q),\ell(Q))$,
where $c(Q)$ denotes the center of $Q$.

The generalized Morrey space
${\mathcal M}_{p,\phi}({\mathbb R}^n)$
is defined as the set of all measurable functions
$f$ for which the norm
\[
\|f\|_{{\mathcal M}_{p,\phi}}
\equiv
\sup_{Q \in {\mathcal Q}}
\frac{1}{\phi(Q)}
\left(\frac{1}{|Q|}\int_Q|f(y)|^p\,dy\right)^{\frac1p}
\]
is finite.

Observe that, if
$\phi(x,r)=r^{\frac{n}{p}}$,
then $\mathcal{M}_{p,\phi}(\mathbb{R}^n)=L^p(\mathbb{R}^n)$.
In the special case when $\phi(x,r) \equiv r^{\lambda/p-n/p}$,
we write
${\mathcal M}_{p,\lambda}({\mathbb R}^n)$
instead of
${\mathcal M}_{p,\phi}({\mathbb R}^n)$.
An important observation made by Nakai
is that we can assume that
$\phi$ itself is decreasing
and that
$\phi(t)t^{n/p} \le \phi(T)T^{n/p}$
for all $0<t \le T<\infty$
when $\phi$ is independent of $x$;
see \cite[p. 446]{Nakai00}.
Indeed, in the case when $1 \le p<\infty$,
Nakai established that there exists a function
$\rho$ such that
$\rho$ itself is decreasing,
that
$\rho(t)t^{n/p} \le \rho(T)T^{n/p}$
for all $0<t \le T<\infty$
and that
${\mathcal M}_{p,\phi}({\mathbb R}^n)
={\mathcal M}_{p,\rho}({\mathbb R}^n)$.
See \cite[(1.2)]{SaSuTa11-1} for the case when $0<p \le 1$.
The class ${\mathcal G}_p$
is defined to be the set of all $\phi$
such that
$\phi$ itself is decreasing
and that
$\phi(t)t^{n/p} \le \phi(T)T^{n/p}$
for all $0<t \le T<\infty$.
This assumption will turn out to be natural
even when $\phi$ depends on $x$;
see Section \ref{s2}.

One of the primary aims of this paper
is to prove the following decomposition
result about the functions in generalized Morrey spaces
${\mathcal M}_{1,\phi}({\mathbb R}^n)$:
\begin{theorem}\label{lem:131108-2}
Assume that
$\phi \in {\mathcal G}_1$ and $\eta \in {\mathcal G}_1$ satisfy
\begin{equation}\label{eq:131126-311}
\int_r^\infty \frac{\phi(x,s)}{\eta(x,s)s}\,ds
\le C
\frac{\phi(x,r)}{\eta(x,r)}.
\end{equation}
Assume that
$\{Q_j\}_{j=1}^\infty \subset {\mathcal Q}({\mathbb R}^n)$,
$\{a_j\}_{j=1}^\infty \subset {\mathcal M}_{1,\eta}({\mathbb R}^n)$
and
$\{\lambda_j\}_{j=1}^\infty \subset[0,\infty)$
fulfill
\begin{equation}\label{eq:thm1-1}
\|a_j\|_{{\mathcal M}_{1,\eta}} \le \frac{1}{\eta(\ell(Q_j))}, \quad
{\rm supp}(a_j) \subset Q_j, \quad
\left\|\sum_{j=1}^\infty \lambda_j \chi_{Q_j}
\right\|_{{\mathcal M}_{1,\phi}}
<\infty.
\end{equation}
Then
$\displaystyle f \equiv \sum_{j=1}^\infty \lambda_j a_j$
converges absolutely in $L^1_{\rm loc}({\mathbb R}^n)$ and satisfies
\begin{equation}\label{eq:thm1-2}
\|f\|_{{\mathcal M}_{1,\phi}}
\le C
\left\|\sum_{j=1}^\infty\lambda_j\chi_{Q_j}
\right\|_{{\mathcal M}_{1,\phi}}.
\end{equation}
\end{theorem}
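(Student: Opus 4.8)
The plan is to prove the norm bound \eqref{eq:thm1-2} by testing the $\cM_{1,\phi}$-norm of $f$ against an arbitrary cube $Q\in\cQ$; the absolute convergence in $\Lloc$ will come out of the same estimates applied with a fixed large cube in place of $Q$. Write $g\equiv\sum_{j}\lambda_j\chi_{Q_j}$ and $M\equiv\|g\|_{\cM_{1,\phi}}$. Since $\supp a_j\subset Q_j$, only indices with $Q_j\cap Q\neq\emptyset$ enter $\int_Q|a_j|$, and we split these into $J_{\rm small}\equiv\{j:\ell(Q_j)\le\ell(Q),\ Q_j\cap Q\neq\emptyset\}$ and $J_{\rm large}\equiv\{j:\ell(Q_j)>\ell(Q),\ Q_j\cap Q\neq\emptyset\}$. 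Throughout we use the two elementary consequences of $\phi,\eta\in\cG_1$: monotonicity in the second variable and the doubling estimates $\phi(2Q')\ge c\,\phi(Q')$, $\eta(2Q')\ge c\,\eta(Q')$ for every cube $Q'$, which follow from the almost-increasing property of $t\mapsto t^n\phi(x,t)$.

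The contribution of $J_{\rm small}$ is the easy part. If $j\in J_{\rm small}$ then $Q_j\subset 3Q$, and testing $\|a_j\|_{\cM_{1,\eta}}\le1/\eta(Q_j)$ against the cube $Q_j$ gives $\int_{Q_j}|a_j|\le|Q_j|$. Hence
\[
\frac{1}{|Q|}\sum_{j\in J_{\rm small}}\lambda_j\int_Q|a_j|
\le\frac{1}{|Q|}\sum_{j\in J_{\rm small}}\lambda_j|Q_j|
\le\frac{1}{|Q|}\int_{3Q}g
\le 3^n\,\frac{\phi(3Q)}{\phi(Q)}\,M
\le 3^n M,
\]
because $3Q$ and $Q$ have the same centre and $\phi$ is decreasing in the second variable, so $\phi(3Q)\le\phi(Q)$.

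The heart of the matter is $J_{\rm large}$. Now $Q\subset 3Q_j$ but $Q_j$ is too large to sit inside a fixed dilate of $Q$, so instead I test $\|a_j\|_{\cM_{1,\eta}}\le1/\eta(Q_j)$ against $Q$ itself, obtaining $\frac{1}{|Q|}\int_Q|a_j|\le\eta(Q)/\eta(Q_j)$; it then suffices to bound $\frac{\eta(Q)}{\phi(Q)}\sum_{j\in J_{\rm large}}\lambda_j/\eta(Q_j)$ by $CM$. Group $J_{\rm large}$ dyadically: $J_k\equiv\{j\in J_{\rm large}:2^k\ell(Q)\le\ell(Q_j)<2^{k+1}\ell(Q)\}$, $k\ge0$. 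Each $Q_j$ with $j\in J_k$ meets $Q$ and has side at most $2^{k+1}\ell(Q)$, so it lies in the cube $R_k$ concentric with $Q$ of side $6\cdot2^k\ell(Q)$; moreover $|Q_j|\ge 2^{kn}|Q|$. Consequently
\[
\sum_{j\in J_k}\lambda_j
\le\frac{1}{2^{kn}|Q|}\sum_{j\in J_k}\lambda_j|Q_j\cap R_k|
\le\frac{1}{2^{kn}|Q|}\int_{R_k}g
\le 6^n\,\phi(R_k)\,M .
\]
Since $\eta$ is decreasing, $1/\eta(Q_j)\le C/\eta(2^kQ)$ for $j\in J_k$ (here the varying centres $c(Q_j)$ are absorbed by spatial doubling — see the remark below), and $\phi(R_k)\le\phi(2^kQ)$; therefore
\[
\sum_{j\in J_{\rm large}}\frac{\lambda_j}{\eta(Q_j)}
\le C M\sum_{k\ge0}\frac{\phi(2^kQ)}{\eta(2^kQ)}
\le C M\int_{\ell(Q)}^{\infty}\frac{\phi(c(Q),s)}{\eta(c(Q),s)}\,\frac{ds}{s}
\le C M\,\frac{\phi(Q)}{\eta(Q)} .
\]
The middle step compares the series with the integral using the doubling bounds (on $[2^k\ell(Q),2^{k+1}\ell(Q)]$ one has $\phi(c(Q),s)/\eta(c(Q),s)\ge c\,\phi(2^kQ)/\eta(2^kQ)$), and the last step is precisely the hypothesis \eqref{eq:131126-311} with $x=c(Q)$, $r=\ell(Q)$. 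This bounds the $J_{\rm large}$-part by $CM$; adding the two parts and taking the supremum over $Q$ gives \eqref{eq:thm1-2}. Finally, running the same two estimates with any fixed large cube $Q_0$ in place of $Q$ shows $\sum_j\lambda_j\int_{Q_0}|a_j|<\infty$, which is the claimed absolute convergence of $\sum_j\lambda_j a_j$ in $\Lloc$, so $f$ is well defined.

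The step I expect to be the main obstacle is the $J_{\rm large}$ estimate: it is a discretised boundedness statement for the weighted Hardy operator $h\mapsto\int_r^\infty h(s)\,ds/s$ with weight $\phi/\eta$, and the care is in passing cleanly from the dyadic sum to the integral so that \eqref{eq:131126-311} can be invoked. A secondary nuisance, caused by the genuine $x$-dependence of $\phi$ and $\eta$ (whereas \eqref{eq:131126-311} is pointwise in $x$), is that the centres $c(Q_j)$ for $j\in J_k$ drift away from $c(Q)$ by an amount comparable to the scale $2^k\ell(Q)$; this is handled by the doubling behaviour of functions in $\cG_1$, which lets one replace $\phi(c(Q_j),\cdot)$ and $\eta(c(Q_j),\cdot)$ by $\phi(c(Q),\cdot)$ and $\eta(c(Q),\cdot)$ at comparable scales up to harmless constants.
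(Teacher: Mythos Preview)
Your proof is correct and follows essentially the same strategy as the paper: split indices by whether $Q_j$ is small or large relative to the test cube $Q$, handle the small ones by the trivial bound $\int_{Q_j}|a_j|\le|Q_j|$, and for the large ones test the $\cM_{1,\eta}$-norm of $a_j$ at $Q$ and then invoke the integral condition \eqref{eq:131126-311} after a dyadic summation in scale. The only cosmetic difference is that the paper first reduces both $Q$ and the $Q_j$ to dyadic cubes, so that the ``large'' cubes $Q_j\supset Q$ automatically form a nested chain of dyadic ancestors of $Q$; this makes the passage to \eqref{eq:131126-311} a one-liner, whereas you carry out the equivalent dyadic grouping $J_k$ by hand and use spatial doubling/compatibility of $\phi,\eta\in\cG_1$ to realign the drifting centres $c(Q_j)$ with $c(Q)$. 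Both routes give the same estimate with the same dependence on the constants.
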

The proof of this theorem is not so difficult
and it is given in an early stage of the present paper;
see Section \ref{s10},
where we do not use the Hardy-Littlewood maximal operator
as in \cite{IST14}.
Unlike the case when $p>1$,
when $0<p \le 1$,
${\mathcal M}_{p,\phi}({\mathbb R}^n)$ is a nasty space
as the following example shows.
\begin{example}
Denote by $M$ the Hardy-Littlewood maximal operator.
\begin{enumerate}
\item
Let $\eta:(0,\infty) \to (0,\infty)$ be a function
which is independent of the position $x$.
One defined ${\mathcal M}_{L\log L,\eta}$
by the following norm in \cite{SST12-2};
\[
\|f\|_{{\mathcal M}_{L\log L,\eta}}
\equiv
\sup_{Q \in {\mathcal Q}}
\frac{1}{\eta(\ell(Q))}
\inf\left\{
\lambda>0\,:\,
\int_Q
\frac{|f(x)|}{\lambda}
\log\left(e+\frac{|f(x)|}{\lambda}\right)\,dx \le |Q|
\right\}.
\]
In \cite[Lemma 3.5]{SST12-2},
we proved
$
C^{-1}
\|f\|_{{\mathcal M}_{L\log L,\eta}}
\le
\|Mf\|_{{\mathcal M}_{1,\eta}}
\le C
\|f\|_{{\mathcal M}_{L\log L,\eta}}
$
for all $f \in {\mathcal M}_{L\log L,\eta}({\mathbb R}^n)$.
\item
In \cite[Lemma 3.4]{SST12-2},
we proved
$
C^{-1}
\|f\|_{{\mathcal M}_{1,\eta}}
\le
\|Mf\|_{{\mathcal M}_{p,\eta}}
\le C
\|f\|_{{\mathcal M}_{1,\eta}}
$
for all $f \in {\mathcal M}_{1,\eta}({\mathbb R}^n)$.
\end{enumerate}
\end{example}
From these examples we see that
${\mathcal M}_{p,\phi}({\mathbb R}^n)$
with $p \in (0,1]$ is difficult to handle.
Probably, Theorem \ref{lem:131108-2} paves the way
to deal with such a nasty space.

Another method to handle these nasty spaces
to use the grand maximal operator
and
define generalized Hardy-Morrey spaces.
Let $t>0$ and $f \in L^1({\mathbb R}^n)$.
Then define the heat semigroup by:
\[
e^{t\Delta}f(x) \equiv
\int_{\mathbb{R}^n}
\frac{1}{\sqrt{(4\pi t)^n}}
\exp\left(-\frac{|x-y|^2}{4t}\right)f(y) dy
 \quad (x \in {\mathbb R}^n).
\]
In a well-known method using the duality,
we naturally extend $e^{t\Delta}f$
to the case when $f \in {\mathcal S}'({\mathbb R}^n)$.
Let $0<p \le 1$ and $\phi \in {\mathcal G}_p$.
The generalized Hardy-Morrey space
$H{\mathcal M}_{p,\phi}({\mathbb R}^n)$
is the set of all
$f \in {\mathcal S}'({\mathbb R}^n)$ 
satisfying
$\sup\limits_{t>0}|e^{t\Delta}f(\cdot)|
\in {\mathcal M}_{p,\phi}({\mathbb R}^n)$.
We equip
$H{\mathcal M}_{p,\phi}({\mathbb R}^n)$
with the following norm:
\begin{equation}\label{HM}
\|f\|_{H{\mathcal M}_{p,\phi}}
\equiv
\left\|\sup\limits_{t>0}|e^{t\Delta}f|\right\|_{{\mathcal M}_{p,\phi}}
\quad
(f \in H{\mathcal M}_{p,\phi}({\mathbb R}^n)).
\end{equation}

We define $d_p\equiv n/p-n$ for $0<p \le 1$.
In addition to Theorem \ref{lem:131108-2},
we shall prove the following two theorems in this paper.
\begin{theorem}\label{thm:131108-2}
Let $0<p \le 1$ and $d \ge d_p$.
Let $q$ satisfy
\begin{equation}\label{eq:140327-1111}
q \in [1,\infty] \cap (p,\infty].
\end{equation}
Assume that
$\phi,\eta \in {\mathcal G}_1$ satisfy
\begin{equation}\label{eq:140327-11156}
\int_r^\infty \frac{\phi(x,s)}{\eta(x,s)s}\,ds
\le C
\frac{\phi(x,r)}{\eta(x,r)}
\end{equation}
for $r>0$.
Assume in addition that
$\{Q_j\}_{j=1}^\infty \subset {\mathcal Q}({\mathbb R}^n)$,
$\{a_j\}_{j=1}^\infty \subset {\mathcal M}_{q,\eta}({\mathbb R}^n)$
and
$\{\lambda_j\}_{j=1}^\infty \subset[0,\infty)$
fulfill
\[
\left\|\left(
\sum_{j=1}^\infty (\lambda_j \chi_{Q_j})^p
\right)^{\frac1p}
\right\|_{{\mathcal M}_{p,\phi}}
<\infty
\]
and
\begin{equation}\label{eq:thm1-1-a}
\|a_j\|_{{\mathcal M}_{q,\eta}} \le
\frac{1}{\eta(Q_j)}, \quad
{\rm supp}(a_j) \subset Q_j, \quad
\int_{Q_j}a(x)x^\alpha\,dx=0
\end{equation}
for all $|\alpha| \le d$.
Then
$\displaystyle f\equiv \sum_{j=1}^\infty \lambda_j a_j$
converges in
${\mathcal S}'({\mathbb R}^n)$,
belongs to
$H{\mathcal M}_{p,\phi}({\mathbb R}^n)$
and satisfies
\begin{equation}\label{eq:thm1-2-b}
\|f\|_{H{\mathcal M}_{p,\phi}}
\le C
\left\|\left(
\sum_{j=1}^\infty (\lambda_j \chi_{Q_j})^p
\right)^{\frac1p}
\right\|_{{\mathcal M}_{p,\phi}}.
\end{equation}
\end{theorem}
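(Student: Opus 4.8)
The plan is to establish, in this order: (i) convergence of $f$ in $\mathcal S'(\mathbb R^n)$; (ii) a pointwise estimate for $\sup_{t>0}|e^{t\Delta}f|$ by a ``near'' superposition plus a ``far'' superposition over the cubes $Q_j$; and (iii) the two resulting ${\mathcal M}_{p,\phi}$-bounds, whose sum is \eqref{eq:thm1-2-b}. For (i), fix $\varphi\in\mathcal S(\mathbb R^n)$; I would subtract from $\varphi$ its Taylor polynomial of degree $d$ at $c(Q_j)$ and invoke the moment conditions $\int_{Q_j}a_j x^\alpha\,dx=0$ ($|\alpha|\le d$) to obtain $|\langle a_j,\varphi\rangle|\lesssim\|a_j\|_{L^1}\,\ell(Q_j)^{d+1}\sup_{Q_j}|\nabla^{d+1}\varphi|$; combined with $|\langle a_j,\varphi\rangle|\le\|a_j\|_{L^1}\|\varphi\|_\infty$, the rapid decay of $\varphi$, and $\|a_j\|_{L^1}\le|Q_j|$ (which follows from $\|a_j\|_{{\mathcal M}_{q,\eta}}\le\eta(Q_j)^{-1}$ by Hölder), the sum $\sum_j\lambda_j|\langle a_j,\varphi\rangle|$ is finite once the index set is split into blocks of comparable size and location of $Q_j$ and each block is estimated against $\|(\sum_i(\lambda_i\chi_{Q_i})^p)^{1/p}\|_{{\mathcal M}_{p,\phi}}$ evaluated on a large cube. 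This makes $e^{t\Delta}f=\sum_j\lambda_j e^{t\Delta}a_j$ meaningful in $\mathcal S'(\mathbb R^n)$.

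For (ii) I would treat each atom separately. On the cube $2\sqrt n\,Q_j$ use the classical pointwise domination $\sup_{t>0}|e^{t\Delta}a_j|\lesssim Ma_j$. Off $2\sqrt n\,Q_j$, Taylor-expand the Gauss--Weierstrass kernel $y\mapsto p_t(x-y)$ to order $d$ about $c(Q_j)$, cancel the polynomial part with the moments, and use the Gaussian bounds $|\nabla^{d+1}p_t(z)|\lesssim t^{-(n+d+1)/2}(1+|z|/\sqrt t)^{-N}$, the inequality $|x-\xi|\gtrsim|x-c(Q_j)|$ for $\xi\in Q_j$, and $\|a_j\|_{L^1}\le|Q_j|$; after optimising in $t>0$ this gives $\sup_{t>0}|e^{t\Delta}a_j(x)|\lesssim\bigl(\ell(Q_j)/(\ell(Q_j)+|x-c(Q_j)|)\bigr)^{n+d+1}$ there. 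Summing against $\lambda_j$ we arrive at $\sup_{t>0}|e^{t\Delta}f|\lesssim N+F$, where $N:=\sum_j\lambda_j(Ma_j)\chi_{2\sqrt nQ_j}$ and $F:=\sum_j\lambda_j g_j$ with $g_j:=\bigl(\ell(Q_j)/(\ell(Q_j)+|\cdot-c(Q_j)|)\bigr)^{n+d+1}\chi_{(2\sqrt nQ_j)^c}$. The hypothesis $d\ge d_p$ is used exactly to make $(n+d+1)p>n$.

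The far part $F$ will be handled by a power-trick together with a vector-valued maximal inequality. Since $M\chi_{Q_j}(x)\gtrsim\bigl(\ell(Q_j)/(\ell(Q_j)+|x-c(Q_j)|)\bigr)^n$, one gets $g_j\lesssim(M\chi_{Q_j})^{\theta/p}$ with $\theta:=(n+d+1)p/n>1$, hence, using $p\le1$, $F^p\lesssim\sum_j\lambda_j^p(M\chi_{Q_j})^\theta=\sum_j\bigl(M(\lambda_j^{p/\theta}\chi_{Q_j})\bigr)^\theta$; set $G:=\bigl(\sum_j(M(\lambda_j^{p/\theta}\chi_{Q_j}))^\theta\bigr)^{1/\theta}$, so $F\lesssim G^{\theta/p}$. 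From the identity $\|G^{\theta/p}\|_{{\mathcal M}_{p,\phi}}=\|G\|_{{\mathcal M}_{\theta,\phi^{p/\theta}}}^{\theta/p}$, valid because $\theta>1$ and $\phi^{p/\theta}\in{\mathcal G}_\theta$ (a consequence of $\phi\in{\mathcal G}_1$, since then $t^{n/p}\phi(t)$ is almost increasing), it remains to apply the Fefferman--Stein vector-valued maximal inequality on ${\mathcal M}_{\theta,\phi^{p/\theta}}$ with $f_j=\lambda_j^{p/\theta}\chi_{Q_j}$; this is legitimate precisely because $\theta>1$, and unwinding the powers yields $\|F\|_{{\mathcal M}_{p,\phi}}\lesssim\|(\sum_i(\lambda_i\chi_{Q_i})^p)^{1/p}\|_{{\mathcal M}_{p,\phi}}$, with $\eta$ playing no role.

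The near part $N$ is where the remaining hypotheses enter, and is the main obstacle. Using the boundedness of $M$ on ${\mathcal M}_{q,\eta}$ for $1<q\le\infty$ — and, for $q=1$ (admissible only when $p<1$, so always $q>p$), the weak-type bound $\|Ma_j\|_{{\mathcal M}_{p,\eta}}\lesssim\|a_j\|_{{\mathcal M}_{1,\eta}}$ from the Example — together with $\eta(2\sqrt nQ_j)\approx\eta(Q_j)$ ($\eta\in{\mathcal G}_1$), each $(Ma_j)\chi_{2\sqrt nQ_j}$ is, up to a fixed constant, supported on $2\sqrt nQ_j$ with ${\mathcal M}_{q,\eta}$- (or ${\mathcal M}_{p,\eta}$-) norm $\le\eta(2\sqrt nQ_j)^{-1}$ and carrying no moment condition. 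The estimate for $N$ then reduces to the argument proving Theorem~\ref{lem:131108-2} in its $\ell^p$- and ${\mathcal M}_{q,\eta}$-version — a moment-free atomic decomposition into ${\mathcal M}_{p,\phi}$ — in which \eqref{eq:140327-11156} is exactly what feeds the weighted Hardy operator summing the contributions of all scales. The hard point, and the place where \cite{IST14} was flawed, is the accounting for the $Q_j$ that are \emph{larger} than a given test cube $Q$: one cannot localise to $Q$, and one must instead combine $\lambda_j\le\phi(Q_j)\,\|(\sum_i(\lambda_i\chi_{Q_i})^p)^{1/p}\|_{{\mathcal M}_{p,\phi}}$ with the decay of $s\mapsto\phi(x,s)/\eta(x,s)$ encoded in \eqref{eq:140327-11156} to absorb $\eta(Q)^p\sum_{Q_j\supset Q}\lambda_j^p/\eta(Q_j)^p$ into $\phi(Q)^p\,\|(\sum_i(\lambda_i\chi_{Q_i})^p)^{1/p}\|_{{\mathcal M}_{p,\phi}}^p$. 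Once this is in place, adding the bounds for $N$ and $F$ gives \eqref{eq:thm1-2-b}.
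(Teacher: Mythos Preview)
Your proposal is correct and follows essentially the same route as the paper: the pointwise splitting of the maximal function of each atom into a ``near'' term $\chi_{cQ_j}Ma_j$ and a ``far'' decaying term (the paper cites this as estimate (\ref{eq:150308-71})), the treatment of the far part by the power trick $g_j\lesssim (M\chi_{Q_j})^{\tau}$ with $\tau=(n+d+1)/n$ followed by the vector-valued Fefferman--Stein inequality on ${\mathcal M}_{p\tau,\phi^{1/\tau}}$, and the reduction of the near part to the moment-free decomposition of Theorem~\ref{lem:131108-2} are exactly what the paper does. The only packaging difference is that, for the near part, the paper takes $p$-th powers explicitly---observing that $\|(Ma_j)^p\|_{{\mathcal M}_{1,\eta^p}}\lesssim\eta(Q_j)^{-p}$ and that \eqref{eq:140327-11156} implies its $p$-th power analogue \eqref{eq:140327-1115}---and then invokes Theorem~\ref{lem:131108-2} verbatim with $(\phi^p,\eta^p)$ in place of $(\phi,\eta)$, whereas you re-run the large-cube/small-cube dichotomy by hand; the content is the same. (A minor expository point: the flaw in \cite{IST14} that the paper corrects concerns the \emph{decomposition} Theorem~\ref{thm:2}, not this synthesis result.)
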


\begin{example}
If there exist $u$ and $v$ with $v>u$ such that
$\eta(x,s)=s^{-n/v}$ and that
$\phi(x,s)s^{n/u} \le \phi(x,r)r^{n/u}$
for all $s$ and $r$ with $s \ge r$,
then (\ref{eq:140327-1115}) is satisfied.
\end{example}

Theorem \ref{thm:131108-2} will refine \cite[p. 100 Theorem]{JW}
in that we can postulate a weaker integrability condition on $a_j$
in Theorem \ref{thm:131108-2}.
We shall take its advantage in Section \ref{s5}.

\begin{theorem}\label{thm:2}
Let $L \in {\mathbb N} \cup \{0\}.$
Let $0<p\le 1$ and $f \in H{\mathcal M}_{p,\phi}({\mathbb R}^n)$.
Then under 
\[
\int_r^\infty \frac{\phi(x,s)^p}{\eta(x,s)^ps}\,ds
\le C
\frac{\phi(x,r)^p}{\eta(x,r)^p}
\]
and 
\[
\int_r^\infty \phi(x,s)\frac{ds}{s} \le C
\phi(x,r).
\]
and $\phi \in {\cG}_1$,
there exists a triplet
$\{\lambda_j\}_{j=1}^\infty \subset [0,\infty)$,
$\{Q_j\}_{j=1}^\infty \subset {\mathcal Q}({\mathbb R}^n)$
and
$\{a_j\}_{j=1}^\infty \subset L^\infty({\mathbb R}^n)$
such that
$f=\sum_{j=1}^\infty \lambda_j a_j$
in ${\mathcal S}'({\mathbb R}^n)$ and that,
for all $v>0$
\begin{equation}\label{eq:thm2-1}
|a_j| \le \chi_{Q_j}, \quad
\int_{{\mathbb R}^n}x^\alpha a_j(x)\,dx=0, \quad
\left\|\left(\sum_{j=1}^\infty
(\lambda_j\chi_{Q_j})^{v}
\right)^{1/v}\right\|_{{\mathcal M}_{p,\phi}}
\le C_v\|f\|_{H{\mathcal M}_{p,\phi}}
\end{equation}
for all multi-indices $\alpha$ with $|\alpha| \le L$.
Here the constant $C_v>0$ is independent of $f$.
\end{theorem}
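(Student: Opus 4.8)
The plan is to prove the atomic \emph{decomposition} (as opposed to the synthesis results of Theorems \ref{lem:131108-2} and \ref{thm:131108-2}) by the standard Calder\'on--Zygmund--Fefferman--Stein grand-maximal machinery, adapted to the Morrey scale. First I would fix the grand maximal function $\cM_N f(x) \equiv \sup\{|\varphi_t * f(x)| : t>0,\ \varphi \in \cD_N\}$ for a suitable large $N=N(p,L,n)$, and recall that $\|\cM_N f\|_{\cM_{p,\phi}} \sim \|f\|_{H\cM_{p,\phi}}$; here the equivalence with the heat-semigroup maximal function is classical and one should cite it as part of the preliminaries on $H\cM_{p,\phi}$. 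Set $\Omega_k \equiv \{x : \cM_N f(x) > 2^k\}$ for $k \in \Z$. Each $\Omega_k$ is open, and since $f \in H\cM_{p,\phi}$ its grand maximal function lies in $\cM_{p,\phi} \subset L^1_{\rm loc}$, so $\Omega_k$ is proper; perform a Whitney decomposition $\Omega_k = \bigcup_i Q_{k,i}$ into dyadic-type cubes with bounded overlap and $\ell(Q_{k,i}) \sim \mathrm{dist}(Q_{k,i}, \Omega_k^c)$, and pick an associated partition of unity $\{\zeta_{k,i}\}$. One then runs the Fefferman--Stein Calder\'on-reproducing construction: writing $f = \sum_i b_{k,i} + g_k$ at level $k$, where $b_{k,i} = (f - P_{k,i})\zeta_{k,i}$ with $P_{k,i}$ the projection onto polynomials of degree $\le L$ (with respect to the measure $\zeta_{k,i}\,dx$), one obtains the telescoping identity $f = \sum_k \sum_i (b_{k,i} - \text{correction terms})$ converging in $\cS'$, and the summands $A_{k,i}$ are, up to normalization, atoms: supported in a fixed dilate $c\,Q_{k,i}$, with vanishing moments up to order $L$, and with $\|A_{k,i}\|_\infty \lesssim 2^k$. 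Defining $Q_j$ to be (the dilate of) $Q_{k,i}$, $\lambda_j \equiv c\, 2^k |Q_{k,i}|^{1/?}\cdots$ — more precisely $\lambda_j \equiv C 2^k \eta(Q_j)$ is \emph{not} what is needed here; rather, since the atoms are to satisfy $|a_j| \le \chi_{Q_j}$, we set $\lambda_j \equiv C 2^k$ and $a_j \equiv A_{k,i}/(C2^k)$.

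The heart of the matter — and the step I expect to be the main obstacle — is the coefficient estimate
\[
\left\|\left(\sum_j (\lambda_j \chi_{Q_j})^v\right)^{1/v}\right\|_{\cM_{p,\phi}}
\le C_v \|f\|_{H\cM_{p,\phi}},
\]
\emph{uniformly in $v>0$}. For a \emph{fixed} level $k$, the cubes $\{Q_{k,i}\}_i$ have bounded overlap and are all contained in $\Omega_k$, so $\sum_i (\lambda_j \chi_{Q_j})^v \lesssim (C2^k)^v \chi_{\Omega_k}$; the difficulty is summing over $k$. The classical $H^p(\R^n)$ argument compares $\sum_k 2^{kp}|\Omega_k|$ with $\int (\cM_N f)^p$ via $\sum_k 2^{kp}|\Omega_k \setminus \Omega_{k+1}| \sim \int (\cM_N f)^p$, but in the Morrey setting one cannot simply integrate: one must estimate, for each cube $R \in \cQ$,
\[
\frac{1}{\phi(R)}\left(\frac{1}{|R|}\int_R \Big(\sum_k (2^k)^v \chi_{\Omega_k}(x)\Big)^{p/v}\,dx\right)^{1/p}.
\]
Here I would exploit that $x \in \Omega_k$ iff $\cM_N f(x) > 2^k$, so for fixed $x$ the inner sum $\sum_k (2^k)^v \chi_{\Omega_k}(x)$ is a geometric series bounded by $C_v\, \cM_N f(x)^v$ with $C_v$ depending only on $v$ (and $C_v$ stays bounded as $v \to 0^+$ provided one is slightly careful — this uniform-in-$v$ control is exactly where the two hypotheses $\int_r^\infty \phi(x,s)^p \eta(x,s)^{-p} s^{-1}\,ds \le C\phi(x,r)^p\eta(x,r)^{-p}$ and $\int_r^\infty \phi(x,s)s^{-1}\,ds \le C\phi(x,r)$ enter, allowing one to absorb the overlap of dilated Whitney cubes across scales into a single Morrey bound, much as in the proof of Theorem \ref{lem:131108-2}). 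Raising to the power $p/v$ and integrating over $R$ then yields $\lesssim C_v^{p/v} \int_R \cM_N f^p$, and dividing by $|R|\phi(R)^p$ and taking the supremum over $R$ gives $\lesssim C_v^{1/v} \|\cM_N f\|_{\cM_{p,\phi}} \sim \|f\|_{H\cM_{p,\phi}}$.

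Finally I would check the remaining two atomic properties. That $|a_j| \le \chi_{Q_j}$ and $\int x^\alpha a_j(x)\,dx = 0$ for $|\alpha| \le L$ follow from the construction: the uniform bound $\|A_{k,i}\|_\infty \lesssim 2^k$ on the Whitney pieces is the standard pointwise estimate on $(f-P_{k,i})\zeta_{k,i}$ (using that $f$ agrees, modulo $\cM_N f$-controlled error, with a polynomial of degree $\le L$ on $cQ_{k,i}$ because that cube meets $\Omega_k^c$), and the moment conditions are built in by choosing $P_{k,i}$ as the appropriate $L^2(\zeta_{k,i}\,dx)$-projection and by the self-correcting telescoping sum. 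Convergence of $f = \sum_j \lambda_j a_j$ in $\cS'$ is obtained by pairing against a Schwartz function $\psi$ and using the moment conditions to gain decay $|\langle A_{k,i}, \psi\rangle| \lesssim 2^k \ell(Q_{k,i})^{n+L+1}(1+\mathrm{dist}(Q_{k,i},0))^{-M}$, then summing; the Morrey coefficient bound just proved guarantees this converges. Throughout, the hypothesis $\phi \in \cG_1$ is used to compare $\phi$ at different scales and to pass between $\phi(Q)$ and $\phi(cQ)$ for the dilated Whitney cubes, exactly as in Section \ref{s2}.
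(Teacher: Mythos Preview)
Your outline captures the paper's approach for the case $f \in L^1_{\rm loc}({\mathbb R}^n)$ quite well: level sets $\Omega_k=\{{\mathcal M}f>2^k\}$, Whitney cubes, the telescoping $f=\sum_k(g_{k+1}-g_k)=\sum_{k,i}A_{k,i}$, then $\lambda_j=C2^k$, $a_j=A_{k,i}/(C2^k)$, and for the coefficient bound the pointwise geometric-series estimate $\sum_k(2^k)^v\chi_{\Omega_k}(x)\lesssim_v {\mathcal M}f(x)^v$ followed by taking the ${\mathcal M}_{p,\phi}$ norm. That part is essentially the paper's argument (and, incidentally, that step needs only the $\phi$-condition \eqref{eq:140327-3}, not the $\eta$-condition you invoke there).

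The genuine gap is your claim that $\|A_{k,i}\|_{L^\infty}\lesssim 2^k$. You justify this as ``the standard pointwise estimate on $(f-P_{k,i})\zeta_{k,i}$'', but that estimate is \emph{not} available for a general $f\in{\mathcal S}'({\mathbb R}^n)$: the Calder\'on--Zygmund lemma gives only the maximal-function bounds \eqref{eq:120919-13}--\eqref{eq:120919-14}, while the $L^\infty$ bound \eqref{eq:150302-21} requires $f\in L^1_{\rm loc}({\mathbb R}^n)$. In classical $H^p({\mathbb R}^n)$ one bypasses this by density of $H^p\cap L^1_{\rm loc}$, but for $H{\mathcal M}_{p,\phi}({\mathbb R}^n)$ such density is expected to \emph{fail} (see the discussion after Theorem~\ref{thm:2} in the introduction), so you cannot approximate. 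The paper handles this with an additional layer absent from your proposal: it first proves the result for $f\in L^1_{\rm loc}$, then for general $f$ decomposes each good function $g_J$ (these \emph{are} $L^1_{\rm loc}$) atomically, indexes the atoms by dyadic cubes, and uses weak-$*$ compactness in $L^\infty$ together with a diagonal/subsequence argument to extract limits $a_Q,\lambda_Q$; finally it verifies $f=\sum_Q\lambda_Q a_Q$ in ${\mathcal S}'$ by a careful three-piece splitting in the cube scale (small cubes via moments, large cubes via $\phi(r)\to0$, intermediate cubes via weak-$*$ convergence). Without this compactness step your atoms $a_j$ need not be bounded functions at all, and the conclusion $|a_j|\le\chi_{Q_j}$ cannot be reached.
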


It is not known that
$H{\mathcal M}_{p,\phi}({\mathbb R}^n)
\cap L^1_{\rm loc}({\mathbb R}^n)$
is dense
in $H{\mathcal M}_{p,\phi}({\mathbb R}^n)$.
It seems that 
$H{\mathcal M}_{p,\phi}({\mathbb R}^n)
\cap L^1_{\rm loc}({\mathbb R}^n)$
is not dense
in $H{\mathcal M}_{p,\phi}({\mathbb R}^n)$
as the fact that
${\mathcal M}_{p,\phi}({\mathbb R}^n)
\cap L^\infty_{\rm comp}({\mathbb R}^n)$
is not dense
in ${\mathcal M}_{p,2p}({\mathbb R}^n)$ implies.
Recall that in \cite{Stein1993},
we resorted to density
of $H^p({\mathbb R}^n) \cap L^1_{\rm loc}({\mathbb R}^n)$
to obtain the atomic decomposition
of $H^p({\mathbb R}^n)$.
The difficulty willl cause a disability;
we can prove Theorem \ref{thm:2} only when
$f \in H{\mathcal M}_{p,\phi}({\mathbb R}^n)$.
By using a diagonal argument,
we circumbent this problem;
see (\ref{eq:150302-141}) and (\ref{eq:150302-14}).

Before we go further, let us recall some special cases
related to generalized Morrey spaces.
\begin{example}\
\begin{enumerate}
\item
Generalized Morrey spaces can cover
$L^\infty({\mathbb R}^n)$ spaces
by letting $\phi \equiv 1$.
\item{\rm \cite[Theorem 5.1]{SW}}\label{p1.3}
Let $1<p<\infty$ and $0<\lambda<n$.
Then there exists a positive constant $C_{p,\lambda}$ such that
\begin{equation}\label{eq:150308-1}
\int_B|f(x)|dx\leq C_{p,\lambda}|B|(1+|B|)^{-\frac{1}{p}}
\log\left(e+\frac{1}{|B|}\right)
\|(1-\Delta)^{\lambda/2p}f\|_{L^{p,\lambda}}
\end{equation}
holds for all $f\in L^{p,\lambda}(\R^n)$
with $(1-\Delta)^{\lambda/2p}f \in L^{p,\lambda}(\R^n)$
and for all balls $B$.
See \cite[Section 5]{EGNS} for more details.
In view of the integral kernel of $(1-\Delta)^{-\alpha/2}$
(see \cite{Stein}) and the Adams theorem,
we have
\begin{equation}\label{1.7}
(1-\Delta)^{-\alpha/2}:
L^{p,\lambda}(\R^n) \to L^{q,\lambda}(\R^n)
\end{equation}
is bounded as long as the parameters $p,q,\lambda$ and $\alpha$ satisfy
\[
1 < p,q<\infty, \,
0<\lambda \le n, \,
\frac{1}{q}=\frac{1}{p}-\frac{\alpha}{\lambda}.
\]
However, if $\alpha=\frac{\lambda}{p}$,
the number $q$ not being finite,
the boundedness assertion $(\ref{1.7})$ is no longer true.
Hence $(\ref{eq:150308-1})$ can be considered as a substitute
of $(\ref{1.7})$.

\end{enumerate}
\end{example}

A passage to the Hardy type space from a given function space
is not a mere quest to generality.
Many people have shown that
Hardy spaces $H^p(\mathbb{R}^n)$ ($0<p\leq \infty$)
can be more informative than
Lebesgue spaces $L^p(\mathbb{R}^n)$
when we discuss the boundedness of some operators.
For example, the Riesz transform is bounded
from $H^1(\mathbb{R}^n)$ to $L^1({\mathbb R}^n)$,
although they are not bounded on $L^1(\mathbb{R}^n)$.
One of the earliest real variable definitions of Hardy spaces
were based on the grand maximal operator,
which is discussed in \cite{Stein1993} and references therein.
One can also give an equivalent definition
for Hardy spaces by using the atomic decomposition.
This definition states that
any elements of Hardy spaces can be represented as the series of atoms.
An atom is a compactly supported function
which enjoys the size condition and the cancellation moment condition.
One of the advantages of the atomic decompositions in Hardy spaces
is that we can prove the boundedness of some operators can be verified
only for the collection of atoms.
The concept of the atomic decomposition in Hardy spaces
can be developed to other function spaces.
Some of these works are
the decomposition of Hardy--Morrey spaces \cite{JW},
the decomposition of Hardy spaces with variable exponent \cite{Sa13},
and
the atomic decomposition of Morrey spaces \cite{IST14}.
Motivated by these advantages that Hardy spaces enjoy,
in our current research,
we investigate the atomic decomposition
for generalized Hardy-Morrey spaces,
where we are based on the definition by means of the grand maximal operator.

There are many attempts of obtaining non-smooth atomic decompositions
by using the grand maximal operator
\cite{GHS-pre,IST14,NaSa2012,NaSa-pre2013},
where the authors handled Morrey spaces,
Orlicz spaces and variable exponent Lebesgue spaces.
Unlike Orlicz spaces, variable exponent Lebesgue spaces,
in general we can take a sequence $\{f_j\}_{j=1}^\infty$
of functions such that
\[
f_1 \ge f_2 \ge \cdots \ge f_j \ge f_{j+1} \ge \cdots \to 0, \quad
\inf_{j \in {\mathbb N}}\|f_j\|_{{\mathcal M}_{p,\phi}}>0.
\]
For example, when $0<p<a$,
the sequence
$f_j(x)\equiv\chi_{(j,\infty)}(|x|)|x|^{-n/a}$ does the job.
This makes it more difficult
to look for a good dense space
of ${\mathcal M}_{p,n-a}({\mathbb R}^n)$.
This difficulty prevents us from using (\ref{eq:150302-1})
directly.

We adopt the following notations:
\begin{enumerate}
\item
$\N_0\equiv \{0,1,\ldots\}$.
\item
Let $A,B \ge 0$.
Then $A \lesssim B$ means
that there exists a constant $C>0$
such that $A \le C B$
and
$A \thickapprox B$ stands for
$A \lesssim B \lesssim A$,
where $C$ depends only on the parameters
of importance.
\item
By \lq \lq cube" we mean a compact cube
whose edges are parallel to the coordinate {\it axes}.
The metric ball defined by $\ell^\infty$ is called a {\it cube}.
If a cube has center $x$ and radius $r$,
we denote it by $Q(x,r)$.
{}From the definition of $Q(x,r)$,
its volume is $(2r)^n$.
We write $Q(r)$ instead of $Q({\rm o},r)$,
where ${\rm o}$ denotes the origin.
Given a cube $Q$,
we denote by $c(Q)$ {\it the center of $Q$}
and by $\ell(Q)$ the {\it sidelength of $Q$}:
$\ell(Q)=|Q|^{1/n}$,
where $|Q|$ denotes the volume of the cube $Q$.
\item
Given a cube $Q$ and $k>0$,
$k\,Q$ means {\it the cube concentric to $Q$
with sidelength $k\,\ell(Q)$}.
\item
By a {\it dyadic cube},
we mean a set of the form $2^{-j}m+[0,2^{-j}]^n$
for some $m \in {\mathbb Z}^n$ and $j \in {\mathbb Z}$.
The set of all dyadic cubes will be denoted
by ${\mathcal D}$.
\item
Let $\mathcal{Q}_x(\mathbb{R}^n)$ be a collection
of all cubes that contain $x\in \mathbb{R}^n$.
\item
In the whole paper,
we adopt the following definition of the Hardy-Littlewood maximal operator
to estimate some integrals.
The Hardy-Littlewood maximal operator $M$ is defined by
\begin{equation}\label{eq:140205-101}
Mf(x)\equiv\sup\limits_{Q\in \mathcal{Q}_x (\mathbb{R}^n)}
\frac{1}{|Q|} \int_{Q} |f(y)| dy,
\end{equation}
for a locally integrable function $f$.
\item
Let $0<\alpha<n$.
We define the fractional integral operator
 $I_{\alpha}$
by
\begin{align*}
I_{\alpha}f(x)\equiv\int_{\mathbb{R}^n} \frac{f(y)}{|x-y|^{n-\alpha}} \ dy
\end{align*}
for all suitable functions $f$ on $\R^n$.
\item
Let $0<q\le \infty$.
If $F\equiv \{ f_j\}_{j=-\infty}^{\infty}$
is a sequence of complex-valued Lebesgue measurable functions on $\Rn$ such that
$$
\|F \|_{{\mathcal M}_{p,\phi}(l_{q})}= \| \|F\|_{l_{q}} \|_{{\mathcal M}_{p,\phi}}<\infty,
$$
write $F \in {\mathcal M}_{p,\phi}(l_{q},{\mathbb R}^n)$.
\item
Let $0 < p, \, q \le \infty$.
If $\{f_j\}_{j \in \Nn}$
is a sequence of complex-valued Lebesgue
measurable functions on $\Omega \subseteq \Rn$,
then define;
\begin{gather*}
\left\| \left\{f_j \right\}_{j \in \Nn}\right\|_{l_q(L^p(\Omega))}
\equiv
\left\|\left\{\left\| f_j \right\|_{L^p(\Omega)}\right\}_{j \in \Nn}
\right\|_{l_q}
\end{gather*}
and
\begin{gather*}
\left\| \left\{f_j \right\}_{j \in \Nn}
\right\|_{L^p(\Omega)(l_q)}
\equiv
\left\|\left\| \left\{f_j \right\}_{j \in \Nn} \right\|_{l_q} \right\|_{L^p(\Omega)}.
\end{gather*}
The space ${\mathcal M}_{p,\phi}(l_{q},{\mathbb R}^n)$ stands for the set of all sequences
$\{ f_j \}_{j \in \N_{0}}$ of complex-valued Lebesgue measurable functions
on $\Rn$ for which
$$
\left\| \{ f_j \}_{j \in \Nn} \right\|_{{\mathcal M}_{p,\phi}(l_{q})}
\equiv
\left\| \left\| \{ f_j\}_{j \in \Nn} \right\|_{l_{q}}\right\|_{{\mathcal M}_{p,\phi}}
<\infty.
$$
Similarly denote by $W{\mathcal M}_{p,\phi}(l_{q},{\mathbb R}^n)$
the set of all sequences
$\{ f_j \}_{j \in \N_{0}}$ for which
$$
\left\| \{ f_j \}_{j \in \Nn} \right\|_{W{\mathcal M}_{p,\phi}(l_{q})}
\equiv
\left\| \left\| \{ f_j \}_{j \in \Nn} \right\|_{l_{q}}
\right\|_{W{\mathcal M}_{p,\phi}}
<\infty.
$$
The spaces
$l_{q}\left({\mathcal M}_{p,\phi}({\mathbb R}^n) \right)$
and
$l_{q}\left(W{\mathcal M}_{p,\phi}({\mathbb R}^n) \right)$
can be also
defined similarly by the norms;
$$
\left\| \{ f_j \}_{j \in \Nn} \right\|_{l_{q}\left({\mathcal M}_{p,\phi} \right)}
\equiv
\left\| \left\{ \left\| f_j\right\|_{{\mathcal M}_{p,\phi}}
\right\}_{j \in \Nn} \right\|_{l_{q}}
<\infty
$$
and
$$
\left\| \{ f_j \}_{j \in \Nn}
\right\|_{l_{q}\left(W{\mathcal M}_{p,\phi} \right)}
\equiv
\left\| \left\{ \left\| f_j\right\|_{W{\mathcal M}_{p,\phi}}
\right\}_{j \in \Nn} \right\|_{l_{q}}
<\infty,
$$
respectively.
\end{enumerate}

Finally, to conclude this section,
we briefly describe how we organize the remaining part of this paper.
Sections \ref{s2} collects preliminary facts.
We collect some elementary facts on function spaces
and investigate the Hardy-Littlewood maximal operator
in Section \ref{s2}.
We prove the main theorems in Section \ref{s4}
and apply these main theorems in Section \ref{s5}.

\section{Fundamental structure of function spaces}
\label{s2}

\subsection{Structure of generalized Morrey spaces}

\begin{lemma}
Let 
$\phi:{\mathbb R}^n \times (0,\infty) \to (0,\infty)$
be a function and let $0<p<\infty$.
Then there exists a function 
$\psi:{\mathbb R}^n \times (0,\infty) \to (0,\infty)$
satisfying
\begin{equation}\label{eq:150311-1}
\psi(y,s)s^{n/p} \le \psi(x,r)r^{n/p}
\end{equation}
for all $x,y \in {\mathbb R}^n$ and $r,s>0$ 
with $\|x-y\|_\infty \le r-s$
such that
${\mathcal M}_{p,\phi}({\mathbb R}^n) =
{\mathcal M}_{p,\psi}({\mathbb R}^n)$
with norm coincidence.
\end{lemma}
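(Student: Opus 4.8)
The plan is to produce $\psi$ from $\phi$ by infimizing over all cubes that contain the given one. First I would rewrite the Morrey norm as
\[
\|f\|_{\cM_{p,\phi}}=\sup_{(x,r)}\frac{\|f\|_{L^p(Q(x,r))}}{\phi(x,r)\,|Q(x,r)|^{1/p}},
\]
with the weight $\phi(x,r)$ attached to the cube $Q(x,r)$ of centre $x$, so that the hypothesis $\|x-y\|_\infty\le r-s$ is literally the inclusion $Q(y,s)\subseteq Q(x,r)$; since $|Q(x,r)|^{1/p}=(2r)^{n/p}$ differs from $r^{n/p}$ only by the fixed constant $2^{n/p}$, the conclusion \eqref{eq:150311-1} is exactly the requirement that $(x,r)\mapsto\psi(x,r)\,|Q(x,r)|^{1/p}$ be non-decreasing along inclusion of cubes. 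Accordingly I would set
\[
\psi(x,r)\equiv|Q(x,r)|^{-1/p}\,\inf\bigl\{\,\phi(z,t)\,|Q(z,t)|^{1/p}\ :\ Q(x,r)\subseteq Q(z,t)\,\bigr\}.
\]

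Two of the three assertions are then immediate. Taking $(z,t)=(x,r)$ in the infimum shows $\psi\le\phi$ pointwise and $\psi(x,r)<\infty$; and $\psi$ is strictly positive — so that indeed $\psi:\Rn\times(0,\infty)\to(0,\infty)$ — as soon as the infima above do not vanish, e.g.\ whenever $\phi$ is bounded below on each set $\Rn\times[r,\infty)$ (this is the only case of interest: if some infimum vanished, $\cM_{p,\phi}(\Rn)$ would contain no nonzero bounded function supported in the corresponding cube). For \eqref{eq:150311-1}: if $Q(y,s)\subseteq Q(x,r)$, then every cube $Q(z,t)\supseteq Q(x,r)$ satisfies $Q(z,t)\supseteq Q(y,s)$, so the admissible family in the infimum defining $\psi(y,s)|Q(y,s)|^{1/p}$ contains the one for $\psi(x,r)|Q(x,r)|^{1/p}$; passing to infima gives $\psi(y,s)|Q(y,s)|^{1/p}\le\psi(x,r)|Q(x,r)|^{1/p}$, i.e.\ \eqref{eq:150311-1}.

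It remains to check that the Morrey norms coincide. From $\psi\le\phi$ we get $\|f\|_{\cM_{p,\psi}}\ge\|f\|_{\cM_{p,\phi}}$ at once. For the reverse inequality I would fix $(x,r)$ and use both $\bigl(\psi(x,r)|Q(x,r)|^{1/p}\bigr)^{-1}=\sup\bigl\{(\phi(z,t)|Q(z,t)|^{1/p})^{-1}:Q(z,t)\supseteq Q(x,r)\bigr\}$ and the monotonicity $\|f\|_{L^p(Q(x,r))}\le\|f\|_{L^p(Q(z,t))}$ valid when $Q(x,r)\subseteq Q(z,t)$, to get
\[
\frac{\|f\|_{L^p(Q(x,r))}}{\psi(x,r)\,|Q(x,r)|^{1/p}}\le\sup_{Q(z,t)\supseteq Q(x,r)}\frac{\|f\|_{L^p(Q(z,t))}}{\phi(z,t)\,|Q(z,t)|^{1/p}}\le\|f\|_{\cM_{p,\phi}}.
\]
Taking the supremum over $(x,r)$ yields $\|f\|_{\cM_{p,\psi}}\le\|f\|_{\cM_{p,\phi}}$, hence equality, and therefore $\cM_{p,\phi}(\Rn)=\cM_{p,\psi}(\Rn)$ with coincidence of norms.

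Since the argument is entirely formal, I do not expect a genuine analytic obstacle; the two points I would watch are the bookkeeping of the dimensional factor $2^{n/p}$ linking $\psi(x,r)|Q(x,r)|^{1/p}$ to $\psi(x,r)r^{n/p}$ — this is what makes the norms match on the nose rather than merely up to a multiplicative constant — and the verification that the infimum defining $\psi$ stays strictly positive, so that $\psi$ indeed takes values in $(0,\infty)$; the latter only excludes degenerate weights $\phi$ and causes no loss.
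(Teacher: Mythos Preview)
Your proof is correct and is essentially the same as the paper's: the paper sets $\psi(x,r)=\inf_{y\in\Rn}\inf_{v\ge r+\|x-y\|_\infty}\phi(y,v)(v/r)^{n/p}$, which is precisely your infimum over cubes $Q(z,t)\supseteq Q(x,r)$ written out in coordinates, and the verification of norm coincidence via $\psi\le\phi$ together with the monotonicity of $\|f\|_{L^p(Q)}$ under cube inclusion is identical. Your caveat about the strict positivity of $\psi$ is a legitimate point that the paper leaves implicit.
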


\begin{proof}
Let us set
\begin{equation}\label{eq:150311-3}
\psi(x,r)\equiv
\inf_{y \in {\mathbb R}^n}
\left(
\inf_{v \ge r+\|x-y\|_\infty}\phi(y,v)
\left(\frac{v}{r}\right)^{n/p}
\right) \quad (x \in {\mathbb R}^n, r>0).
\end{equation}
Then we have
$\phi(x,r) \ge \psi(x,r)$ trivially and hence
$\|f\|_{{\mathcal M}_{p,\phi}}
\le
\|f\|_{{\mathcal M}_{p,\psi}}$.
Meanwhile,
\begin{align*}
\|f\|_{{\mathcal M}_{p,\psi}}
&=
\sup_{Q \in {\mathcal Q}}
\frac{1}{\psi(c(Q),\ell(Q))}
\left(\frac{1}{|Q|}\int_Q|f(y)|^p\,dy\right)^{\frac1p}\\
&=
\sup_{Q \in {\mathcal Q}}\sup_{y \in {\mathbb R}^n}
\left(
\sup_{v \ge \ell(Q)+\|x-y\|_\infty}
\frac{1}{\phi(y,v)}
\left(\frac{1}{v^n}\int_Q|f(y)|^p\,dy\right)^{\frac1p}
\right)\\
&\le
\sup_{y \in {\mathbb R}^n}
\left(
\sup_{v \ge \ell(Q)+\|x-y\|_\infty}
\frac{1}{\phi(y,v)}
\left(\frac{1}{v^n}\int_{Q(y,v)}|f(y)|^p\,dy\right)^{\frac1p}
\right)\\
&=\|f\|_{{\mathcal M}_{p,\phi}}.
\end{align*}
Thus, we have
${\mathcal M}_{p,\phi}({\mathbb R}^n)=
{\mathcal M}_{p,\psi}({\mathbb R}^n)$
with norm coincidence.

From the definition of $\psi$,
it is easy to check that we have (\ref{eq:150311-1}).
\end{proof}

\begin{lemma}
Let $0<p<\infty$ and let 
$\phi:{\mathbb R}^n \times (0,\infty) \to (0,\infty)$
be a function satisfying $(\ref{eq:150311-1})$.
Then there exists a function 
$\psi:{\mathbb R}^n \times (0,\infty) \to (0,\infty)$
satisfying 
\begin{equation}\label{eq:150311-2}
\psi(x,r) \le \psi(x,s)
\end{equation}
for all $x \in {\mathbb R}^n$ and $0<s \le r<\infty$
such that
${\mathcal M}_{p,\phi}({\mathbb R}^n) =
{\mathcal M}_{p,\psi}({\mathbb R}^n)$
with norm equivalence.
\end{lemma}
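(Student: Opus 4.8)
The plan is to produce $\psi$ from $\phi$ by a second infimum-type regularization, this time over the radial variable alone, designed to force monotonicity in $r$ while not destroying the scaling control $(\ref{eq:150311-1})$. Concretely, I would set
\[
\psi(x,r)\equiv\inf_{0<s\le r}\phi(x,s)\qquad(x\in{\mathbb R}^n,\ r>0).
\]
By construction $\psi(x,r)\le\phi(x,r)$, and $\psi$ is visibly non-increasing in $r$, so $(\ref{eq:150311-2})$ holds. Since $\psi\le\phi$ pointwise we immediately get $\|f\|_{{\mathcal M}_{p,\phi}}\le\|f\|_{{\mathcal M}_{p,\psi}}$, exactly as in the previous lemma. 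So the whole content is the reverse inequality $\|f\|_{{\mathcal M}_{p,\psi}}\lesssim\|f\|_{{\mathcal M}_{p,\phi}}$, i.e. a uniform lower bound $\psi(x,r)\gtrsim\phi(x,r)$ — which is obviously false in general, hence the regularization must interact with the cube average rather than be a pointwise statement.

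So the real argument goes through the averages. Fix a cube $Q=Q(x,r)$. For any $s\le r$, choose (up to a factor $2$) a near-minimizer so that $\phi(x,s)\le 2\,\psi(x,r)$ is \emph{not} available; instead, pick $s_0\le r$ with $\phi(x,s_0)\le 2\psi(x,r)$. The hypothesis $(\ref{eq:150311-1})$ applied with $y=x$ gives $\phi(x,s_0)s_0^{n/p}\ge\phi(x,r)r^{n/p}$, wait — that inequality points the wrong way; rather $(\ref{eq:150311-1})$ with the roles arranged so that the larger radius sits on the right yields $\psi(x,r)r^{n/p}=\phi(x,s_0)s_0^{n/p}\ge \text{(something comparable to }\phi(x,r)r^{n/p})$ only after using the two-sided structure. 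The clean route: $(\ref{eq:150311-1})$ with $x=y$ says $\phi(x,s)s^{n/p}\le\phi(x,r)r^{n/p}$ for $s\le r$, hence $t^{n/p}\phi(x,t)$ is non-decreasing; combined with $\psi(x,r)=\inf_{s\le r}\phi(x,s)$ one checks $\psi(x,r)\le\phi(x,r)\le\bigl(\tfrac{r}{s}\bigr)^{n/p}\psi(x,r)$ cannot hold uniformly, so the correct claim is the equivalence of the two \emph{Morrey norms}, proved by covering: estimate $\frac{1}{\psi(x,r)}\bigl(\frac1{|Q(x,r)|}\int_{Q(x,r)}|f|^p\bigr)^{1/p}$ by splitting via the near-minimizing radius $s_0$ and enlarging the integration cube from $Q(x,r)$ to $Q(x,s_0)$ when $s_0\ge r$, or shrinking and using $\phi(x,s_0)\approx\psi(x,r)$ together with $s_0^{n/p}/r^{n/p}\le1$ when $s_0\le r$. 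In the end every term is dominated by $\|f\|_{{\mathcal M}_{p,\phi}}$ up to a fixed constant, giving norm equivalence (not coincidence, which is why the statement only claims equivalence).

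I would organize it as: (i) define $\psi$, record $\psi\le\phi$ and monotonicity in $r$, deduce one inequality for free; (ii) prove $\psi$ still satisfies a scaling bound of the form $(\ref{eq:150311-1})$ — or at least the one-variable version $\psi(x,s)s^{n/p}\lesssim\psi(x,r)r^{n/p}$ — which follows because $s\mapsto s^{n/p}\phi(x,s)$ being almost increasing is inherited by the infimum; (iii) for a fixed cube, pick a near-minimizer $s_0$ of $\phi(x,\cdot)$ on $(0,r]$, and bound the $\psi$-normalized average over $Q(x,r)$ by the $\phi$-normalized average over $Q(x,s_0)$ or $Q(x,r)$ as appropriate, absorbing the radius ratios using step (ii); (iv) take the supremum over cubes to conclude $\|f\|_{{\mathcal M}_{p,\psi}}\le C\|f\|_{{\mathcal M}_{p,\phi}}$. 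The main obstacle is step (iii): one must be careful that the near-minimizing radius may be much smaller than $r$, so the cube $Q(x,s_0)$ is tiny and a naive enlargement loses too much — the resolution is that in that regime $\phi(x,s_0)$ itself is small in a way quantified by $(\ref{eq:150311-1})$, and it is precisely the almost-increasing property of $t^{n/p}\phi(x,t)$ that compensates the volume factor $(r/s_0)^{n}$ hidden in passing between the two averages. Getting the bookkeeping of these two competing powers of $r/s_0$ to cancel is the crux; everything else is routine.
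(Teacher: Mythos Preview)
Your definition $\psi(x,r)=\inf_{0<s\le r}\phi(x,s)$ is too weak, and step (iii) cannot be completed as you describe. The hypothesis $(\ref{eq:150311-1})$ with $y=x$ says only that $t\mapsto t^{n/p}\phi(x,t)$ is non-decreasing; this is an \emph{upper} bound on $\phi(x,s)$ for $s\le r$, not a lower bound, so your $\psi$ can collapse to $0$ (take $\phi(x,t)=t$ for small $t$, which satisfies $(\ref{eq:150311-1})$). More to the point, in the non-degenerate regime your ``compensation'' claim is backwards: passing from the average over $Q(x,r)$ to the average over the tiny cube $Q(x,s_0)$ discards the integral over $Q(x,r)\setminus Q(x,s_0)$, and the almost-increasing property gives $\phi(x,s_0)\le (r/s_0)^{n/p}\phi(x,r)$, which is a bound in the wrong direction to absorb the lost volume. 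There is no cancellation to arrange here.

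The paper's proof differs in two linked ways. First, it takes
\[
\psi(x,r)\equiv\inf_{0<s\le r}\ \sup_{y\in Q(x,r)}\phi(y,s),
\]
with a spatial supremum built in. Second, instead of trying to relate $\phi$ at different radii, it uses a pigeonhole argument: for any cube $Q$ and any $s\le\ell(Q)$ there is a subcube $R_Q(s)\subset Q$ of side $s$ with
\[
\Bigl(\tfrac{1}{|R_Q(s)|}\!\int_{R_Q(s)}|f|^p\Bigr)^{1/p}\ge 2^{-n/p}\Bigl(\tfrac{1}{|Q|}\!\int_{Q}|f|^p\Bigr)^{1/p}.
\]
Since $1/\psi(x,r)=\sup_{s\le r}\inf_{y\in Q(x,r)}1/\phi(y,s)$ and $c(R_Q(s))\in Q$, one may choose $y=c(R_Q(s))$ in the infimum and land directly on a $\phi$-normalized average over $R_Q(s)$, bounded by $\|f\|_{\mathcal{M}_{p,\phi}}$. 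Your radial-only infimum lacks the $\sup_y$, so after pigeonholing to a subcube centered at some $y\neq x$ you would still need $\phi(y,s)\lesssim\phi(x,s)$ for $\|x-y\|_\infty$ as large as $r$, which $(\ref{eq:150311-1})$ does not provide.
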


\begin{proof}
Let us set
\[
\psi(x,r) \equiv
\inf_{0<s \le r}\left(\sup_{y \in Q(x,r)}\phi(y,s)\right).
\]
It is easy to see that (\ref{eq:150311-2}) is satisfied.
Then 
\[
\psi(x,r)\le \sup_{y \in Q(x,r)}\phi(y,r)
\le 3^{n/p}\sup_{y \in Q(x,r)}\phi(y,3r)
\] 
from (\ref{eq:150311-1})
and hence
$\|f\|_{{\mathcal M}_{p,\phi}}\le 3^{n/p}\|f\|_{{\mathcal M}_{p,\psi}}$.
Meanwhile,
for all $Q \in {\mathcal Q}$ and $0<s \le \ell(Q)$,
we can find a cube $R=R_Q(s)$ contained in $Q$ 
such that $\ell(R)=s$ and that
\[
\left(\frac{1}{|R|}\int_{R}|f(y)|^p\,dy\right)^{1/p}
\ge 2^{-n/p}
\left(\frac{1}{|Q|}\int_{Q}|f(y)|^p\,dy\right)^{1/p}.
\]
Therefore, it follows that
\begin{align*}
\|f\|_{{\mathcal M}_{p,\psi}}
&=
\sup_{Q \in {\mathcal Q}}
\frac{1}{\psi(c(Q),\ell(Q))}
\left(\frac{1}{|Q|}\int_Q|f(y)|^p\,dy\right)^{\frac1p}\\
&=
\sup_{Q \in {\mathcal Q}}
\sup_{0<s<r}
\left(
\inf_{y \in Q}
\frac{1}{\phi(y,s)}
\left(\frac{1}{|Q|}\int_{Q}|f(y)|^p\,dy\right)^{\frac1p}
\right)\\
&\le 2^{n/p}
\sup_{Q \in {\mathcal Q}}
\sup_{0<s<r}
\left(
\inf_{y \in Q}\frac{1}{\phi(y,s)}
\left(\frac{1}{|R_Q(s)|}\int_{R_Q(s)}|f(y)|^p\,dy\right)^{\frac1p}
\right)\\
&\le 2^{n/p}
\sup_{Q \in {\mathcal Q}}
\sup_{0<s<r}
\left(
\frac{1}{\phi(R_Q(s))}
\left(\frac{1}{|R_Q(s)|}\int_{R_Q(s)}|f(y)|^p\,dy\right)^{\frac1p}
\right)\\
&\le
2^{n/p}\|f\|_{{\mathcal M}_{p,\phi}},
\end{align*}
as was to be shown.
\end{proof}

\begin{lemma}\label{lem:150311-1}
Let $0<p<\infty$ and let 
$\phi:{\mathbb R}^n \times (0,\infty) \to (0,\infty)$
be a function satisfying 
\begin{equation}\label{eq:150311-11}
\phi(x,r) \le \phi(x,s)
\end{equation}
for all $0<s \le r<\infty$ and $x \in {\mathbb R}^n$.
Then there exists a function 
$\psi:{\mathbb R}^n \times (0,\infty) \to (0,\infty)$
satisfying $(\ref{eq:150311-1})$ and $(\ref{eq:150311-2})$
such that
${\mathcal M}_{p,\phi}({\mathbb R}^n) =
{\mathcal M}_{p,\psi}({\mathbb R}^n)$
with norm coincidence.
\end{lemma}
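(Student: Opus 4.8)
The plan is to show that no new construction is required for this lemma: the function $\psi$ already furnished by the first of the two preceding lemmas simultaneously satisfies both conditions, once we bring in the extra hypothesis (\ref{eq:150311-11}). First I would invoke that first lemma — which assumes nothing on $\phi$ — to obtain the function $\psi$ defined in (\ref{eq:150311-3}),
\[
\psi(x,r) \equiv
\inf_{y \in {\mathbb R}^n}
\left(
\inf_{v \ge r+\|x-y\|_\infty}\phi(y,v)
\left(\frac{v}{r}\right)^{n/p}
\right) \qquad (x \in {\mathbb R}^n,\ r>0).
\]
By that lemma, $\psi$ satisfies (\ref{eq:150311-1}) and ${\mathcal M}_{p,\phi}({\mathbb R}^n)={\mathcal M}_{p,\psi}({\mathbb R}^n)$ with norm coincidence. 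Therefore the whole statement is proved as soon as I check that this same $\psi$ also satisfies (\ref{eq:150311-2}), and this is the only place where (\ref{eq:150311-11}) is used.

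To establish (\ref{eq:150311-2}), fix $x \in {\mathbb R}^n$ and $0<s\le r<\infty$; I want $\psi(x,r)\le\psi(x,s)$. It suffices to match every pair $(y,v)$ admissible in the infimum defining $\psi(x,s)$ (that is, $y\in{\mathbb R}^n$ and $v\ge s+\|x-y\|_\infty$) by an admissible pair $(y,v')$ for $\psi(x,r)$ with $\phi(y,v')(v'/r)^{n/p}\le\phi(y,v)(v/s)^{n/p}$, and then pass to the infimum. If $v\ge r+\|x-y\|_\infty$, I take $v'=v$, and the desired inequality holds because $s\le r$ forces $(v/r)^{n/p}\le(v/s)^{n/p}$. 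If instead $s+\|x-y\|_\infty\le v<r+\|x-y\|_\infty$, I take $v'\equiv r+\|x-y\|_\infty$; then $v<v'$, so (\ref{eq:150311-11}) gives $\phi(y,v')\le\phi(y,v)$, while $v'/r=1+\|x-y\|_\infty/r\le 1+\|x-y\|_\infty/s\le v/s$ gives $(v'/r)^{n/p}\le(v/s)^{n/p}$; multiplying these yields the claim.

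I do not expect a genuine obstacle here: the substance of the lemma is simply that the construction in the first lemma is compatible with monotonicity of $\phi$ in its second argument. The only point that needs a little care is the second case above, where the scale $v$ that is (nearly) optimal for $\psi(x,s)$ may fail to be admissible for $\psi(x,r)$; this is repaired by raising $v$ to the admissibility threshold $r+\|x-y\|_\infty$ and absorbing the loss through the decrease of $\phi$. Taking the infimum over all admissible $(y,v)$ then delivers $\psi(x,r)\le\psi(x,s)$, i.e.\ (\ref{eq:150311-2}), and, together with (\ref{eq:150311-1}) and the norm coincidence inherited from the first lemma, this completes the proof.
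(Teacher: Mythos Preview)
Your proof is correct and follows essentially the same route as the paper: you take the same $\psi$ from (\ref{eq:150311-3}), invoke the first lemma for (\ref{eq:150311-1}) and the norm coincidence, and then verify (\ref{eq:150311-2}) using the monotonicity hypothesis (\ref{eq:150311-11}). The only cosmetic difference is in that last verification: the paper handles it in one stroke via the multiplicative substitution $v\mapsto (R'/R)v$, whereas you split into the two cases $v\ge r+\|x-y\|_\infty$ and $v<r+\|x-y\|_\infty$ and push $v$ up to the admissibility threshold in the second case; both arguments use (\ref{eq:150311-11}) at the same point and are equivalent in substance.
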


\begin{proof}
Let us define $\psi$ by (\ref{eq:150311-3}).
Then as we have seen,
$\psi:{\mathbb R}^n \times (0,\infty) \to (0,\infty)$
satisfies $(\ref{eq:150311-1})$
and
${\mathcal M}_{p,\phi}({\mathbb R}^n) =
{\mathcal M}_{p,\psi}({\mathbb R}^n)$
with norm coincidence.
It remains to check (\ref{eq:150311-2}).
Let $R<R'$.
Then,
from (\ref{eq:150311-11}),
we obtain
\begin{align*}
\psi(x,R')
&=
\inf_{y \in {\mathbb R}^n}
\left(
\inf_{v \ge R'+\|x-y\|_\infty}\phi(y,v)
\left(\frac{v}{R'}\right)^{n/p}
\right)\\
&\le
\inf_{y \in {\mathbb R}^n}
\left(
\inf_{v \ge R'+R'\|x-y\|_\infty/R}\phi(y,v)
\left(\frac{v}{R'}\right)^{n/p}
\right)\\
&=
\inf_{y \in {\mathbb R}^n}
\left(
\inf_{v \ge R+\|x-y\|_\infty}\phi(y,R'v/R)
\left(\frac{v}{R}\right)^{n/p}
\right)\\
&\le
\inf_{y \in {\mathbb R}^n}
\left(
\inf_{v \ge R+\|x-y\|_\infty}\phi(y,v)
\left(\frac{v}{R}\right)^{n/p}
\right)=\psi(x,R).
\end{align*}
This proves (\ref{eq:150311-2}).
\end{proof}

The following compatibility condition:
\begin{equation}\label{eq:compatibility}
\phi(x,r) \sim \phi(y,r) \quad (|x-y| \le r)
\end{equation}
can be naturally postulated.
\begin{proposition}
Let $0<p<\infty$ and let 
$\phi:{\mathbb R}^n \times (0,\infty) \to (0,\infty)$
be a function
satisfying $(\ref{eq:150311-1})$ and $(\ref{eq:150311-2})$.
Then $\phi$ satisfies $(\ref{eq:compatibility})$.
\end{proposition}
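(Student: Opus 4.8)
The plan is to derive both inequalities implicit in the relation $\phi(x,r)\sim\phi(y,r)$ directly from $(\ref{eq:150311-1})$ and $(\ref{eq:150311-2})$ (read with $\phi$ in place of $\psi$), the only trick being to dilate by a factor $2$ so as to create the slack needed to meet the hypothesis $\|x-y\|_\infty \le r-s$ of $(\ref{eq:150311-1})$. Since the condition $|x-y|\le r$ is symmetric in $x$ and $y$, it suffices to prove $\phi(x,r)\le C\phi(y,r)$.

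First I would fix $x,y\in\mathbb{R}^n$ and $r>0$ with $|x-y|\le r$, so that $\|x-y\|_\infty\le r=2r-r$. Applying $(\ref{eq:150311-1})$ with the pair $(x,r)$ playing the role of the ``small-scale'' pair and $(y,2r)$ playing the role of the ``large-scale'' pair — which is legitimate precisely because $\|x-y\|_\infty\le 2r-r$ — yields $\phi(x,r)\,r^{n/p}\le\phi(y,2r)\,(2r)^{n/p}$, i.e.\ $\phi(x,r)\le 2^{n/p}\phi(y,2r)$. Then I would invoke the monotonicity $(\ref{eq:150311-2})$ in the second variable, $\phi(y,2r)\le\phi(y,r)$ since $r\le 2r$, to get $\phi(x,r)\le 2^{n/p}\phi(y,r)$. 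Exchanging the roles of $x$ and $y$ gives $\phi(y,r)\le 2^{n/p}\phi(x,r)$, hence $(\ref{eq:compatibility})$ holds with implied constant $2^{n/p}$.

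There is essentially no obstacle: the only point requiring attention is the bookkeeping of which argument plays the role of ``$s$'' and which plays the role of ``$r$'' in $(\ref{eq:150311-1})$, together with the observation that passing from scale $r$ to scale $2r$ is exactly what makes the triangle-type constraint $\|x-y\|_\infty\le r-s$ satisfiable. If $|x-y|$ denotes the Euclidean norm rather than $\|\cdot\|_\infty$, one simply notes $\|x-y\|_\infty\le|x-y|\le r$ at the start; the argument is otherwise identical.
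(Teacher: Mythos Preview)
Your proof is correct and follows essentially the same approach as the paper's: the paper chains $\phi(x,r)\ge\phi(x,3r)\gtrsim\phi(y,r)$ (applying $(\ref{eq:150311-2})$ first, then $(\ref{eq:150311-1})$ with dilation factor $3$), whereas you chain $\phi(x,r)\le 2^{n/p}\phi(y,2r)\le 2^{n/p}\phi(y,r)$ (applying $(\ref{eq:150311-1})$ first with dilation factor $2$, then $(\ref{eq:150311-2})$). The order of the two steps and the choice of dilation constant are immaterial differences; the argument is the same.
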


\begin{proof}
By $(\ref{eq:150311-2})$,
we have
$\phi(x,r) \gtrsim \phi(x,3r)$
and by $(\ref{eq:150311-1})$
$\phi(x,3r) \gtrsim \phi(y,r)$.
\end{proof}

With Lemma \ref{lem:150311-1} in mind,
we always assume that $\phi \in \cG_p$ satisfies (\ref{eq:150311-1}).

The main structure of this generalized Morrey space
${\mathcal M}_{p,\phi}({\mathbb R}^n)$ is as follows:
\begin{proposition}
Let $0<p<\infty$ and $\phi \in {\mathcal G}_p$.
Assume $(\ref{eq:150311-1})$.
Then
\begin{equation}\label{eq:141006-1}
\frac{1}{\phi(Q)}
\le
\|\chi_{Q}\|_{{\mathcal M}_{p,\phi}}
\le C
\frac{1}{\phi(Q)}.
\end{equation}
\end{proposition}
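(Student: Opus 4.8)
The plan is to establish the two inequalities in $(\ref{eq:141006-1})$ by testing the Morrey norm of $\chi_Q$ against well-chosen cubes. For the lower bound, I would simply test the supremum defining $\|\chi_Q\|_{{\mathcal M}_{p,\phi}}$ against the cube $Q$ itself: since $\left(\frac{1}{|Q|}\int_Q \chi_Q(y)^p\,dy\right)^{1/p}=1$, we get $\|\chi_Q\|_{{\mathcal M}_{p,\phi}} \ge \frac{1}{\phi(Q)}$ immediately, with no hypothesis beyond the definition.

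For the upper bound I would bound $\frac{1}{\phi(R)}\left(\frac{1}{|R|}\int_R \chi_Q\right)^{1/p}$ uniformly over all cubes $R \in {\mathcal Q}$. Split into two cases according to $\ell(R)$ versus $\ell(Q)$. When $\ell(R) \ge \ell(Q)$: I would use $\int_R \chi_Q \le |Q|$ to get the quantity is at most $\frac{1}{\phi(R)}\left(\frac{|Q|}{|R|}\right)^{1/p} = \frac{1}{\phi(R)\ell(R)^{n/p}}\ell(Q)^{n/p}$, and then invoke $\phi \in {\mathcal G}_p$ in the form $\phi(R)\ell(R)^{n/p} \gtrsim \phi(c(R),\ell(Q))\ell(Q)^{n/p}$ (here the almost-increasing property of $t \mapsto t^{n/p}\phi(x,t)$ is used, so a constant $C$ enters), followed by the compatibility condition $(\ref{eq:compatibility})$ — which holds by the preceding Proposition since we assume $(\ref{eq:150311-1})$ and $(\ref{eq:150311-2})$ — to replace $\phi(c(R),\ell(Q))$ by $\phi(c(Q),\ell(Q))=\phi(Q)$; one must check $R \cap Q \ne \emptyset$, which we may assume since otherwise the integral vanishes, and then $|c(R)-c(Q)|_\infty \lesssim \ell(R)$, so compatibility applies after possibly one more application of the monotonicity/doubling of $\phi$. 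When $\ell(R) < \ell(Q)$: I would use the trivial bound $\left(\frac{1}{|R|}\int_R \chi_Q\right)^{1/p} \le 1$, so the quantity is at most $\frac{1}{\phi(R)} = \frac{1}{\phi(c(R),\ell(R))}$, and then use $(\ref{eq:150311-2})$ (decreasing in the radius) together with compatibility to bound this by $C\,\frac{1}{\phi(c(Q),\ell(R))} \le C\,\frac{1}{\phi(Q)}$, the last step again by monotonicity of $\phi$ in the radius since $\ell(R) < \ell(Q)$.

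Taking the supremum over $R$ in both cases yields $\|\chi_Q\|_{{\mathcal M}_{p,\phi}} \le C/\phi(Q)$, completing the proof. The only mild subtlety — the ``main obstacle'' if any — is the careful bookkeeping of which cube's center appears in $\phi$ after each estimate, and ensuring that the passage from $\phi(c(R),\cdot)$ to $\phi(c(Q),\cdot)$ is legitimate; this is handled cleanly by the compatibility condition $(\ref{eq:compatibility})$ proved above, which is why the standing assumption $(\ref{eq:150311-1})$ was imposed. Everything else is a routine comparison of volumes and an application of the defining monotonicity properties of the class ${\mathcal G}_p$.
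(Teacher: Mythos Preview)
Your approach is essentially the same as the paper's: test $R=Q$ for the lower bound; for the upper bound, split according to the relative sizes of $R$ and $Q$ and combine the monotonicity properties of $\phi\in{\mathcal G}_p$ with the center--shift property. The paper phrases the split as $Q\subset 3R$ versus $Q\not\subset 3R$ and, in the small-$R$ case, introduces an auxiliary cube $S$ concentric to $R$ with $\ell(S)=3\ell(Q)$, but this is cosmetically the same argument.

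There is one concrete slip in your bookkeeping, however. The compatibility condition $(\ref{eq:compatibility})$ reads $\phi(x,r)\sim\phi(y,r)$ only when $|x-y|\lesssim r$, so it must be invoked at the \emph{larger} of the two scales present. In your case $\ell(R)<\ell(Q)$ you pass from $\phi(c(R),\ell(R))$ to $\phi(c(Q),\ell(R))$; this would need $|c(R)-c(Q)|\lesssim\ell(R)$, which can fail badly since $\|c(R)-c(Q)\|_\infty\le\frac{\ell(R)+\ell(Q)}{2}$ may be of order $\ell(Q)$. The fix is to swap the order: first use that $\phi(c(R),\cdot)$ is decreasing to go from scale $\ell(R)$ to $\ell(Q)$, and only then apply compatibility at scale $\ell(Q)$ (now legitimate) to change the center. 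The same reordering repairs the case $\ell(R)\ge\ell(Q)$: first apply compatibility at scale $\ell(R)$ to replace $c(R)$ by $c(Q)$, then use the almost-increase of $t^{n/p}\phi(c(Q),t)$ to descend to $\ell(Q)$. With these swaps your argument is complete.
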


\begin{proof}
By the definition,
\[
\|\chi_Q\|_{{\mathcal M}_{p,\phi}}
=
\sup_{R \in {\mathcal Q}}\frac{1}{\phi(R)}\left(\frac{|Q \cap R|}{|R|}\right)^{1/p}.
\]
Thus, the left inequality is clear.
From (\ref{eq:150311-1}),
we have
\[
\|\chi_Q\|_{{\mathcal M}_{p,\phi}}
=
\sup_{R \in {\mathcal Q}, Q \setminus 3R \ne \emptyset}\le
\frac{1}{\phi(R)}\left(\frac{|Q \cap R|}{|R|}\right)^{1/p}.
\]
Let $R$ be a cube such that $3R$ does not engulf $Q$
and that $R$ intersects $Q$. 
We let $S$ be a cube concentric to $R$
having sidelength $3Q$.
Then 
\[
\frac{1}{\phi(R)}\left(\frac{|Q \cap R|}{|R|}\right)^{1/p}
\le 
\frac{1}{\phi(S)}
\le 
\frac{C}{\phi(Q)}.
\]
Thus, we obtain the right inequality.
\end{proof}

\begin{remark}
See \cite[Proposition 2.1]{GHS-pre} for the case when $p \ge 1$.
The same proof works for this case
but for the sake of convenience for readers
we supply the whole proof.
\end{remark}

\begin{corollary}\label{cor:150308-1}
Let $0<p \le 1$ and $\phi \in {\mathcal G}_p$.
There exists $N \gg 1$ such that
$(1+|\cdot|)^{-N} \in {\mathcal M}_{p,\phi}({\mathbb R}^n)$.
In particular,
${\mathcal M}_{1,\phi}({\mathbb R}^n)$
is continously embedded into ${\mathcal S}'({\mathbb R}^n)$.
\end{corollary}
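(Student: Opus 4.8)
The plan is to estimate the tail $(1+|x|)^{-N}$ directly against the Morrey norm, exploiting the two defining conditions of ${\mathcal G}_p$. Fix $N$ large (to be chosen). Since $\phi$ is decreasing in $r$ and $r^{n/p}\phi(x,r)$ is almost increasing, the worst cubes are of two kinds: small cubes near the origin, and large cubes. First I would split an arbitrary cube $Q=Q(y,r)$ according to whether $r\le 1$ or $r>1$. When $r>1$, one has $\int_Q(1+|z|)^{-Np}\,dz\le\|( 1+|\cdot|)^{-N}\|_{L^p}^p<\infty$ provided $Np>n$, so
\[
\frac{1}{\phi(Q)}\Bigl(\frac{1}{|Q|}\int_Q(1+|z|)^{-Np}\,dz\Bigr)^{1/p}
\lesssim \frac{1}{\phi(y,r)\,r^{n/p}}\lesssim \frac{1}{\phi(y,1)},
\]
where the last step uses that $r^{n/p}\phi(y,r)\gtrsim \phi(y,1)$ for $r\ge 1$; since $\phi$ is positive this is a finite bound uniform in $y$ once we invoke the compatibility estimate (\ref{eq:compatibility}) to compare $\phi(y,1)$ with $\phi$ on a fixed neighbourhood of the origin — more precisely, $\phi(y,1)\gtrsim \phi(y,2+|y|)$ is false in general, so instead I would use $\phi(y,1)\gtrsim \phi(o,1+|y|)\cdot(1+|y|)^{-n/p}$ coming from (\ref{eq:150311-1}), and then $\phi(o,1+|y|)\ge \phi(o,r)$ only helps in one direction; the cleanest route is to bound $(1+|z|)^{-N}$ pointwise by a fixed ${\mathcal M}_{p,\phi}$ function, which I describe next.

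The key reduction is: it suffices to find one cube-indexed majorant. I would write $(1+|x|)^{-N}\le \sum_{k=0}^\infty 2^{-Nk}\chi_{Q(o,2^{k})}(x)$ up to a constant, since on the annulus $2^{k-1}\le|x|<2^k$ one has $(1+|x|)^{-N}\approx 2^{-Nk}$ and this is dominated by the $k$-th term. Then by the quasi-triangle inequality (or the triangle inequality when $p\ge 1$, and a $p$-convexity argument when $p<1$ applied to $(1+|x|)^{-Np}\le \sum_k 2^{-Npk}\chi_{Q(o,2^k)}$) together with (\ref{eq:141006-1}),
\[
\|(1+|\cdot|)^{-N}\|_{{\mathcal M}_{p,\phi}}^{\min(1,p)}
\lesssim \sum_{k=0}^\infty 2^{-Npk}\,\|\chi_{Q(o,2^k)}\|_{{\mathcal M}_{p,\phi}}^{\min(1,p)}
\lesssim \sum_{k=0}^\infty \frac{2^{-Npk}}{\phi(o,2^k)^{\min(1,p)}}.
\]
Now the almost-monotonicity $\phi(o,2^k)\,2^{kn/p}\gtrsim \phi(o,1)$, i.e. $\phi(o,2^k)\gtrsim \phi(o,1)\,2^{-kn/p}$, gives $\phi(o,2^k)^{-\min(1,p)}\lesssim \phi(o,1)^{-\min(1,p)}2^{kn\min(1,p)/p}$, hence the series is bounded by a constant times $\sum_k 2^{-k(Np-n)\min(1,p)/p}$, which converges once $N>n/p$. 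This proves $(1+|\cdot|)^{-N}\in{\mathcal M}_{p,\phi}({\mathbb R}^n)$.

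For the embedding into ${\mathcal S}'({\mathbb R}^n)$ when $p=1$, I would argue that for $f\in{\mathcal M}_{1,\phi}({\mathbb R}^n)$ and $\varphi\in{\mathcal S}({\mathbb R}^n)$, choosing $N$ as above and using $|\varphi(x)|\le C_\varphi (1+|x|)^{-M}$ for $M$ large, one writes $\int|f\varphi|$ as a sum over dyadic annuli $Q(o,2^{k+1})\setminus Q(o,2^k)$, estimates $\int_{Q(o,2^{k+1})}|f|\lesssim \phi(o,2^{k+1})\,2^{(k+1)n}\lesssim \phi(o,1)\,2^{kn}$ by the Morrey bound and almost-monotonicity (with $p=1$, $r^n\phi(o,r)\lesssim$ nothing — rather $\phi(o,r)\lesssim \phi(o,1)$ directly since $\phi$ is decreasing and $r\ge 1$, while $\phi(o,r)\,r^n\gtrsim \phi(o,1)$ is not needed here), and pairs this against the decay $2^{-kM}$ of $\varphi$ to get a convergent geometric series; for the cube $Q(o,1)$ itself the bound is immediate. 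This shows $f\mapsto \int f\varphi$ is well-defined and continuous on ${\mathcal S}$, giving the stated embedding. The main obstacle I anticipate is bookkeeping the dependence on the base point $y$ in the first reduction — which is why I would bypass it entirely via the dyadic majorant $\sum_k 2^{-Nk}\chi_{Q(o,2^k)}$ and (\ref{eq:141006-1}), reducing everything to a single convergent numerical series controlled by $\phi(o,\cdot)$.
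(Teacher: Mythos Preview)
Your proof is correct and follows exactly the idea the paper sketches in a single line: the paper's entire proof reads ``Just observe that each term in (\ref{eq:141006-1}) grows polynomially,'' and your dyadic majorant $(1+|x|)^{-N}\lesssim\sum_{k\ge 0} 2^{-Nk}\chi_{Q(o,2^k)}$ together with $\|\chi_{Q(o,2^k)}\|_{{\mathcal M}_{p,\phi}}\sim\phi(o,2^k)^{-1}\lesssim 2^{kn/p}$ is precisely the way to cash that observation in. Your separate treatment of the embedding ${\mathcal M}_{1,\phi}({\mathbb R}^n)\hookrightarrow{\mathcal S}'({\mathbb R}^n)$ via $\int_{Q(o,2^{k+1})}|f|\lesssim 2^{kn}\phi(o,2^{k+1})\|f\|_{{\mathcal M}_{1,\phi}}$ and the decreasing property of $\phi$ is also correct and more explicit than the paper, which subsumes it under the same polynomial-growth remark.
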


\begin{proof}
Just observe that each term in (\ref{eq:141006-1}) grows polynomially.
\end{proof}

Prior to the proof of Theorems \ref{lem:131108-2} and \ref{thm:2},
observe that we have the following equivalent expression:
\[
\|f\|_{{\mathcal M}_{p,\phi}}
\sim
\sup_{Q \in {\mathcal D}}
\frac{1}{\phi(Q)}
\left(\frac{1}{|Q|}\int_Q|f(y)|^p\,dy\right)^{\frac1p}.
\]

\subsection{Proof of Theorem \ref{lem:131108-2}}
\label{s10}

By replacing $a_j$ with $|a_j|$ if necessary,
we may assume that $a_j$ is non-negative.

Let $Q$ be a fixed dyadic cube.
We need to show
\begin{equation}\label{eq:140327-1000}
\frac{1}{\phi(Q)|Q|}\int_Q|f(y)|\,dy
\lesssim
\left\|\sum_{j=1}^\infty\lambda_j\chi_{Q_j}
\right\|_{{\mathcal M}_{1,\phi}}.
\end{equation}
By considering dyadic cubes of equivalent length,
we may assume that $Q_j$ is dyadic.
Let us set
\[
J_1\equiv\{j\,:\,Q_j \subset Q\}, \quad
J_2\equiv\{j\,:\,Q_j \supset Q\}.
\]
In terms of the sets $J_1$ and $J_2$,
we shall show
\begin{equation}\label{eq:140327-101}
\frac{1}{\phi(Q)|Q|}\int_Q
\sum_{j \in J_1} \lambda_j a_j(y)\,dy
\lesssim
\left\|\sum_{j=1}^\infty\lambda_j\chi_{Q_j}
\right\|_{{\mathcal M}_{1,\phi}}
\end{equation}
and
\begin{equation}\label{eq:140327-102}
\frac{1}{\phi(Q)|Q|}\int_Q
\sum_{j \in J_2} \lambda_j a_j(y)\,dy
\lesssim
\left\|\sum_{j=1}^\infty\lambda_j\chi_{Q_j}
\right\|_{{\mathcal M}_{1,\phi}}.
\end{equation}
Once we prove (\ref{eq:140327-101})
and (\ref{eq:140327-102}),
then we will have proved
(\ref{eq:140327-1000}).

To prove (\ref{eq:140327-101}),
we observe
\[
\frac{1}{|Q_j|}\int_{Q_j}a_j(x)\,dx
\le
\eta(Q_j)\|a_j\|_{{\mathcal M}_{1,\eta}}
\le
1
\]
and that
\begin{align*}
\frac{1}{\phi(Q)|Q|}\int_Q
\sum_{j \in J_1} \lambda_j a_j(y)\,dy
&=
\frac{1}{\phi(Q)|Q|}\int_Q
\sum_{j \in J_1} \lambda_j
\left(\frac{1}{|Q_j|}\int_{Q_j}a_j(y)\,dy\right)
\chi_{Q_j}(z)\,dz\\
&\le
\frac{1}{\phi(Q)|Q|}\int_Q
\sum_{j \in J_1} \lambda_j
\chi_{Q_j}(z)\,dz\\
&\le
\left\|\sum_{j=1}^\infty\lambda_j\chi_{Q_j}
\right\|_{{\mathcal M}_{1,\phi}}.
\end{align*}

To prove (\ref{eq:140327-102}),
we note that there exists an increasing (possibly finite) sequence
of dyadic cubes $R_1,R_2,\ldots$ such that
$\{Q_j \,:\,j \in J_2\}
=
\{R_1,R_2,\ldots\}.$
By using this sequence and (\ref{eq:131126-311}), we have
\[
\int_Q
\sum_{j \in J_2}\frac{\lambda_j a_j(y)}{\phi(Q)|Q|}\,dy
=
\sum_m
\lambda_m
\frac{\eta(Q)\phi(R_m)}{\phi(Q)\eta(R_m)}
\|\chi_{R_m}\|_{{\mathcal M}_{1,\phi}}
\lesssim
\left\|\sum_{j=1}^\infty\lambda_j\chi_{Q_j}
\right\|_{{\mathcal M}_{1,\phi}},
\]
as was to be shown.

\begin{remark}
It may be interesting to compare Theorem \ref{lem:131108-2}
with \cite[Theorem 1.1]{IST14}.
We state in words of ${\mathcal M}_{p,\lambda}({\mathbb R}^n)$:
Suppose that the parameters $q,t,\lambda,\rho$ satisfy
$$
1<q<\infty, \quad 1<t<\infty, \quad
q<t, \quad \frac{q}{n-\lambda}<\frac{t}{n-\rho}.
$$
Assume that
$\{Q_j\}_{j=1}^\infty \subset {\mathcal Q}({\mathbb R}^n)$,
$\{a_j\}_{j=1}^\infty \subset {\mathcal M}_{t,\rho}({\mathbb R}^n)$
and
$\{\lambda_j\}_{j=1}^\infty \subset[0,\infty)$
fulfill
\[
\|a_j\|_{{\mathcal M}_{t,\rho}} \le |Q_j|^{\frac{n-\rho}{nt}}, \quad
{\rm supp}(a_j) \subset Q_j, \quad
\left\|\sum_{j=1}^\infty \lambda_j \chi_{Q_j}
\right\|_{{\mathcal M}_{q,\lambda}}
<\infty.
\]
Then
$f \equiv \sum_{j=1}^\infty \lambda_j a_j$
converges in
${\mathcal S}'({\mathbb R}^n) \cap L^q_{\rm loc}({\mathbb R}^n)$
and satisfies
\begin{equation}\label{eq:thm1-2a}
\|f\|_{{\mathcal M}_{q,\lambda}}
\lesssim
\left\|\sum_{j=1}^\infty\lambda_j\chi_{Q_j}\right\|_{{\mathcal M}_{q,\lambda}}.
\end{equation}
An example in \cite[Section 4]{SaSuTa11-1}
shows that we can not let $q=r$.
Meanwhile, when $q=1$, Theorem \ref{lem:131108-2} shows
that we can take $r=1$.
\end{remark}

\subsection{Boundedness of the maximal operator}

Below we write $W{\mathcal M}_{1,\lambda}({\mathbb R}^n)$
to denote the weak Morrey space;
a measurable function $f$ belongs to
$W{\mathcal M}_{1,\lambda}({\mathbb R}^n)$
if and only if
\[
\|f\|_{W{\mathcal M}_{1,\lambda}}
\equiv
\sup_{T>0}T\|\chi_{\{|f|>T\}}\|_{{\mathcal M}_{1,\lambda}}
<\infty.
\]
The following result is standard and we aim
to extend it to generalized Morrey spaces.
\begin{theorem}{\rm \cite{CF}}\label{ChirFr}
Let $0<\lambda <n$. Then{\rm;}
\begin{enumerate}
\item[$(1)$]
$M$ is bounded on ${\mathcal M}_{p,\lambda}(\mathbb{R}^n)$ if $1<p<\infty$;
\item[$(2)$]
$M$ is bounded from ${\mathcal M}_{1,\lambda}(\mathbb{R}^n)$
to $W{\mathcal M}_{1,\lambda}(\mathbb{R}^n)$.
\end{enumerate}
\end{theorem}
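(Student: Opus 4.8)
The plan is to reproduce the classical Chiarenza--Frasca argument, reducing both assertions to the known $L^p(\Rn)$-boundedness (for $1<p<\infty$) and weak-$(1,1)$-boundedness (for $p=1$) of $M$, combined with the elementary reformulation of the Morrey norm coming from the convention $\phi(x,r)=r^{\lambda/p-n/p}$: namely $\|f\|_{\cM_{p,\lambda}}=\sup_{Q}\ell(Q)^{-\lambda/p}\|f\|_{L^p(Q)}$, so that $\int_Q|f|^p\lesssim\ell(Q)^{\lambda}\|f\|_{\cM_{p,\lambda}}^p$ for every cube $Q$, and likewise $\|\chi_E\|_{\cM_{1,\lambda}}=\sup_{Q}\ell(Q)^{-\lambda}|E\cap Q|$ for measurable $E$.

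First I would fix a cube $Q$ and decompose $f=g+h$ with $g\equiv f\chi_{3Q}$ and $h\equiv f\chi_{\Rn\setminus 3Q}$; since $g$ and $h$ have disjoint supports, $Mf\le Mg+Mh$ pointwise. For the \emph{local} term, when $1<p<\infty$ the $L^p$-maximal inequality gives $\|Mg\|_{L^p(Q)}\le\|Mg\|_{L^p(\Rn)}\lesssim\|f\|_{L^p(3Q)}\lesssim\ell(Q)^{\lambda/p}\|f\|_{\cM_{p,\lambda}}$, hence $\ell(Q)^{-\lambda/p}\|Mg\|_{L^p(Q)}\lesssim\|f\|_{\cM_{p,\lambda}}$; when $p=1$ one invokes instead the weak-$(1,1)$ inequality, $|\{Mg>T\}|\lesssim T^{-1}\|f\|_{L^1(3Q)}\lesssim T^{-1}\ell(Q)^{\lambda}\|f\|_{\cM_{1,\lambda}}$.

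For the \emph{tail} term the key geometric observation is that if $x\in Q$ and a cube $R\ni x$ meets $\Rn\setminus 3Q$, then necessarily $\ell(R)\gtrsim\ell(Q)$ and $R\subset Q(c(Q),C\ell(R))$ for an absolute constant $C$. Applying H\"older's inequality on $Q(c(Q),C\ell(R))$ and the Morrey estimate,
\[
\frac{1}{|R|}\int_R|h|
\lesssim\frac{1}{\ell(R)^{n}}\int_{Q(c(Q),C\ell(R))}|f|
\lesssim\ell(R)^{(\lambda-n)/p}\|f\|_{\cM_{p,\lambda}};
\]
since $\lambda<n$ the exponent is negative, so taking the supremum over all admissible $R$ gives the pointwise bound $Mh(x)\lesssim\ell(Q)^{(\lambda-n)/p}\|f\|_{\cM_{p,\lambda}}$ for every $x\in Q$, and therefore $\ell(Q)^{-\lambda/p}\|Mh\|_{L^p(Q)}\lesssim\|f\|_{\cM_{p,\lambda}}$, the $p=1$ case being identical.

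Finally, for $1<p<\infty$ one adds the local and tail estimates and takes the supremum over $Q$ to obtain (1). For (2) the two pieces cannot be combined additively, so I would argue by a dichotomy for each pair $(Q,T)$: writing $c_0\ell(Q)^{\lambda-n}\|f\|_{\cM_{1,\lambda}}$ for the pointwise bound on $Mh$ over $Q$, either $T/2$ exceeds this quantity, in which case $\{x\in Q:Mf(x)>T\}\subset\{Mg>T/2\}$ and the weak-$(1,1)$ bound gives $T\ell(Q)^{-\lambda}|\{x\in Q:Mf(x)>T\}|\lesssim\|f\|_{\cM_{1,\lambda}}$; or it does not, in which case $|\{x\in Q:Mf(x)>T\}|\le|Q|=\ell(Q)^{n}$ trivially, so $T\ell(Q)^{-\lambda}|\{x\in Q:Mf(x)>T\}|\le T\ell(Q)^{n-\lambda}\lesssim\|f\|_{\cM_{1,\lambda}}$. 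Taking suprema over $T$ and $Q$ yields (2). I expect the only step needing genuine care to be the tail estimate — one must notice that any cube witnessing a large average of $h$ at a point of $Q$ is comparable to or larger than $Q$, and then exploit $\lambda<n$ to sum the resulting powers of $\ell(R)$; in the $p=1$ case the extra subtlety is precisely that the weak-$(1,1)$ bound is not additive with the pointwise tail bound, which is why the dichotomy above is needed.
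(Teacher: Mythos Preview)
Your argument is correct and is precisely the classical Chiarenza--Frasca proof. Note, however, that the paper does not supply its own proof of this statement: it is quoted with the citation \cite{CF} and used as background. So there is no ``paper's proof'' to compare against here.

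That said, the paper does later prove broad generalizations (Proposition~\ref{3.4.}, Theorem~\ref{3.4.VectV}) which contain this result as the special case $\phi(x,t)=t^{(\lambda-n)/p}$, and it is worth noting that the route taken there differs slightly from yours. The local/tail decomposition $f=f\chi_{3Q}+f\chi_{\Rn\setminus 3Q}$ is the same, and the tail is handled by the same geometric observation that any averaging cube meeting $\Rn\setminus 3Q$ must have side $\gtrsim\ell(Q)$. But instead of immediately exploiting $\lambda<n$ to bound the tail pointwise by $\ell(Q)^{(\lambda-n)/p}\|f\|_{\cM_{p,\lambda}}$, the paper writes the tail contribution as a Hardy-type integral $r^{n/p}\int_{2r}^\infty t^{-n/p-1}\|f\|_{L^p(B(x,t))}\,dt$ (Lemmas~\ref{lem4.1Vect}--\ref{lem4.1DVect}) and then invokes the abstract weighted Hardy operator bound of Theorem~\ref{thm3.2.} to pass to the Morrey norm. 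Your direct estimate is simpler and fully adequate for the power weight $\phi(x,t)=t^{(\lambda-n)/p}$; the paper's detour through the Hardy operator is what allows the argument to extend to general $\phi$ satisfying only the integral condition~(\ref{eq3.6.VZInt}).
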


We denote by $L_{\infty,v}(0,\infty)$ the space of all
functions $g(t)$, $t>0$ such that
$$
\|g\|_{L_{\infty,v}(0,\infty)} \equiv \sup_{t>0}v(t)g(t)
$$ is finite
and $L_{\infty}(0,\infty) \equiv L_{\infty,1}(0,\infty)$.
The space ${\mathfrak M}(0,\i)$ is defined to be the set of all Lebesgue-measurable
functions on $(0,\i)$ and ${\mathfrak M}^+(0,\i)$ its subset
consisting of all nonnegative functions on $(0,\i)$. We denote by
${\mathfrak M}^+\!(0,\i;\!\uparrow\!)\!$ the cone of all functions in
${\mathfrak M}^+(0,\i)$ which are non-decreasing on $(0,\i)$ and
$$
\A\equiv\left\{\phi \in {\mathfrak M}^+(0,\i;\uparrow):
\lim_{t\rightarrow 0+}\phi(t)=0\right\}.
$$
Let $u$ be a continuous and non-negative function on $(0,\i)$. We
define the supremal operator
$\overline{S}_{u}$ on $g\in {\mathfrak M}(0,\i)$ by
$$
(\overline{S}_{u}g)(t)\equiv \|u\, g\|_{L_{\i}(t,\i)},~~t\in (0,\i).
$$
We invoke the following theorem.
\begin{theorem}{\rm \cite{BurGogGulMus1}}\label{thm5.1}
Let $v_1$, $v_2$ be non-negative measurable functions satisfying
$0<\|v_1\|_{L_{\i}(t,\i)}<\i$ for any $t>0$ and let $u$ be a continuous
non-negative function on $(0,\i)$

Then the operator $\overline{S}_{u}$ is bounded from
$L_{\i,v_1}(0,\i)$ to $L_{\i,v_2}(0,\i)$ on the cone $\A$ if and
only if
\begin{equation}\label{eq6.8}
\begin{split}
\left\|v_2 \overline{S}_{u}\left( \| v_1 \|^{-1}_{L_{\i}(\cdot,\i)}
\right)\right\|_{L_{\i}(0,\i)}<\i.
\end{split}
\end{equation}
\end{theorem}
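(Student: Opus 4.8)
The plan is to show that the operator norm of $\overline{S}_{u}$ from $L_{\i,v_1}(0,\i)$ to $L_{\i,v_2}(0,\i)$ on the cone $\A$ equals \emph{exactly} the left-hand side of (\ref{eq6.8}); the ``bounded if and only if finite'' assertion is then immediate. The first step is a monotonicity reduction. Write $h(t)\equiv\|v_1\|_{L_{\i}(t,\i)}$, which is non-increasing and, by hypothesis, takes values in $(0,\i)$. I would first record the elementary fact that $v_1(t)\le h(t)$ for a.e.\ $t>0$: for each rational $q$ the set $\{s>q:v_1(s)>h(q)\}$ is null, and its union over all $q$, together with the countable set of jumps of the monotone function $h$, furnishes a null exceptional set. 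Consequently, for any $g\in\A$ and any $t>0$, monotonicity of $g$ yields
\[
h(t)\,g(t)=g(t)\,\esup_{s>t}v_1(s)\le\esup_{s>t}v_1(s)g(s)\le\|g\|_{L_{\i,v_1}(0,\i)},
\]
so that $g(t)\le\|g\|_{L_{\i,v_1}(0,\i)}\,h(t)^{-1}$ \emph{pointwise}. In other words, $g_0(t)\equiv h(t)^{-1}=\|v_1\|_{L_{\i}(\cdot,\i)}^{-1}$ is the pointwise-largest element of the unit ball of $L_{\i,v_1}(0,\i)$, up to the normalizing constant and the caveat that $g_0$ itself need not satisfy $\lim_{t\to0+}g_0(t)=0$.

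For sufficiency, assume the left side of (\ref{eq6.8}) equals $A<\i$. Given $g\in\A$ with $\|g\|_{L_{\i,v_1}(0,\i)}\le1$, the reduction above gives $g\le g_0$ pointwise; since $u\ge0$, the operator $\overline{S}_{u}$ is monotone with respect to the pointwise order, hence $\overline{S}_{u}g\le\overline{S}_{u}g_0$ and therefore $\|\overline{S}_{u}g\|_{L_{\i,v_2}(0,\i)}\le\|v_2\,\overline{S}_{u}g_0\|_{L_{\i}(0,\i)}=A$. Thus $\overline{S}_{u}$ is bounded on $\A$ with norm at most $A$.

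For necessity, suppose $\overline{S}_{u}$ is bounded on $\A$ with norm $C$. For $a>0$ I would test against the truncation $g_a\equiv g_0\,\chi_{(a,\i)}$: it is non-decreasing and vanishes near the origin, so $g_a\in\A$, and $\|g_a\|_{L_{\i,v_1}(0,\i)}=\esup_{t>a}v_1(t)/h(t)\le1$ by the a.e.\ bound $v_1\le h$. For $t\ge a$ one has $\overline{S}_{u}g_a(t)=\esup_{s>t}u(s)g_0(s)=\overline{S}_{u}g_0(t)$, whence
\[
\esup_{t>a}v_2(t)\,\overline{S}_{u}g_0(t)\le\|\overline{S}_{u}g_a\|_{L_{\i,v_2}(0,\i)}\le C .
\]
Letting $a\to0+$ shows that the left-hand side of (\ref{eq6.8}) is at most $C$, in particular finite.

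The step I expect to demand the most care is the monotonicity reduction: establishing the a.e.\ inequality $v_1(t)\le\|v_1\|_{L_{\i}(t,\i)}$ and deducing the clean pointwise domination $g\le g_0$ for every $g$ in the unit ball. This is precisely what makes the single test function $g_0$ enough to detect boundedness, and it relies on handling essential suprema and the one-sided limits of the monotone envelope $h$ correctly; once it is in place, both implications are short monotonicity arguments.
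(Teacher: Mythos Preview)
The paper does not supply its own proof of this theorem; it is quoted verbatim from \cite{BurGogGulMus1} and merely invoked later (in the proof of Theorem~\ref{3.4.VectV} via Theorem~\ref{thm3.2.}). So there is no in-paper argument to compare against.

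Your argument is correct and is in fact the standard one. The key observation is exactly what you isolate: on the cone $\A$ of non-negative non-decreasing functions, the unit ball of $L_{\infty,v_1}$ has a pointwise-largest element, namely $g_0(t)=\|v_1\|_{L_\infty(t,\infty)}^{-1}$, because monotonicity of $g$ lets one replace $g(t)\,\esup_{s>t}v_1(s)$ by $\esup_{s>t}g(s)v_1(s)$. Since $\overline{S}_u$ is order-preserving, sufficiency is immediate; for necessity the truncations $g_a=g_0\chi_{(a,\infty)}$ are genuine members of $\A$ (here is where $\lim_{t\to0+}=0$ is used) with norm at most $1$, and letting $a\downarrow0$ recovers the full quantity in (\ref{eq6.8}). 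The only places to be careful are exactly the ones you flag: the a.e.\ inequality $v_1\le h$ (so that $\|g_a\|_{L_{\infty,v_1}}\le1$ holds with the essential supremum interpretation of the norm) and the fact that $g_0$ itself need not lie in $\A$, which is why the truncation step is needed. Your handling of both is fine.
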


\

We will use the following statement on the boundedness
of the weighted Hardy operator
$$
H^{\ast}_{w} g(t)\equiv \int_t^{\infty} g(s) w(s) ds,~ \ \ 0<t<\infty,
$$
where $w$ is a weight.

The following theorem in the case $w=1$ was proved
in \cite[Theorem 5.1]{BurGogGulMus1}.

\begin{theorem}{\rm \cite[Theorem 3.1]{GulJMS2013}}\label{thm3.2.}
Let $v_1$, $v_2$ and $w$ be weights on $(0,\infty)$ and
assume that $v_1$ is bounded outside a neighborhood of the origin. 
The inequality
\begin{equation} \label{vav01}
\sup _{t>0} v_2(t) H^{\ast}_{w} g(t) \leq C \sup _{t>0} v_1(t) g(t)
\end{equation}
holds for some $C>0$ for all non-negative and non-decreasing $g$ on $(0,\i)$ if and
only if
\begin{equation} \label{vav02}
B\equiv \sup _{t>0} v_2(t)\int_t^{\infty} \frac{w(s) ds}{\sup _{s<\tau<\infty} v_1(\tau)}<\infty.
\end{equation}
Moreover, the value $C=B$ is the best constant for \eqref{vav01}.
\end{theorem}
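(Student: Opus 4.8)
The plan is to establish the two implications of the theorem separately and to read off the sharp constant $C=B$ from the two halves of the argument.

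\emph{The implication \eqref{vav02}$\Rightarrow$\eqref{vav01}, together with $C\le B$.} Let $g$ be non-negative and non-decreasing on $(0,\infty)$ and set $A\equiv\sup_{s>0}v_1(s)g(s)$, which may be assumed finite. For every $s>0$ and every $\tau>s$, monotonicity of $g$ yields $g(s)\le g(\tau)\le A/v_1(\tau)$, and hence
\[
g(s)\le A\,\Big(\sup_{s<\tau<\infty}v_1(\tau)\Big)^{-1}\qquad(s>0).
\]
Multiplying by $w(s)$, integrating over $(t,\infty)$, multiplying by $v_2(t)$, and taking the supremum over $t>0$ turns this pointwise estimate into
\[
\sup_{t>0}v_2(t)H^{\ast}_{w}g(t)\le A\,\sup_{t>0}v_2(t)\int_t^{\infty}\frac{w(s)\,ds}{\sup_{s<\tau<\infty}v_1(\tau)}=B\,A,
\]
which is \eqref{vav01} with constant $C=B$.

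\emph{The implication \eqref{vav01}$\Rightarrow$\eqref{vav02}, together with $B\le C$.} Here I would insert into \eqref{vav01} the largest non-decreasing function compatible with the normalisation $v_1g\le1$, namely
\[
g_0(s)\equiv\Big(\sup_{s<\tau<\infty}v_1(\tau)\Big)^{-1}\qquad(s>0).
\]
Since $s\mapsto\sup_{s<\tau<\infty}v_1(\tau)$ is non-increasing, $g_0$ is non-negative and non-decreasing, hence an admissible test function; the assumption that $v_1$ is bounded outside a neighbourhood of the origin is precisely what makes $\sup_{s<\tau<\infty}v_1(\tau)$ finite, so that $g_0$ is strictly positive for $s$ away from the origin. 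Using $v_1(s)\le\sup_{s<\tau<\infty}v_1(\tau)$, which holds for all but countably many $s$, one gets $\sup_{s>0}v_1(s)g_0(s)\le1$, so the right-hand side of \eqref{vav01} for $g=g_0$ is at most $C$; its left-hand side, on the other hand, is exactly
\[
\sup_{t>0}v_2(t)\int_t^{\infty}\frac{w(s)\,ds}{\sup_{s<\tau<\infty}v_1(\tau)}=B.
\]
Hence $B\le C$, and together with the first step this proves the equivalence and shows that $C=B$ is the best constant in \eqref{vav01}.

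\emph{Where the difficulty lies.} Both one-line pointwise estimates above are routine; the only genuinely delicate point is the construction of the extremal function $g_0$ in the necessity direction and the verification that its $L_{\infty,v_1}$-norm does not exceed $1$. The inequality $v_1(s)\le\sup_{s<\tau<\infty}v_1(\tau)$ can fail on a (necessarily countable, hence null) set of exceptional points where $v_1$ has an isolated spike; this causes no harm, since one may either interpret the suprema defining the norms as essential suprema --- under which the above quantities are unchanged --- or replace $g_0$ by the genuinely admissible truncations $g_0^{\varepsilon}(s)\equiv(\sup_{s-\varepsilon<\tau<\infty}v_1(\tau))^{-1}$, for which $v_1g_0^{\varepsilon}\le1$ holds everywhere, and then let $\varepsilon\downarrow0$ by monotone convergence. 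Beyond this, the proof is the natural extension of the unweighted case $w\equiv1$, which is \cite[Theorem 5.1]{BurGogGulMus1}; the weight $w$ enters only passively, sitting inside the integral that defines $H^{\ast}_{w}$.
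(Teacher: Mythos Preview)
The paper does not supply its own proof of this statement: Theorem~\ref{thm3.2.} is quoted verbatim from \cite[Theorem~3.1]{GulJMS2013} (with the special case $w\equiv1$ attributed to \cite[Theorem~5.1]{BurGogGulMus1}), and only the conventions $1/\infty\equiv0$ and $0\cdot\infty\equiv0$ are recorded in the remark that follows. There is therefore no in-paper argument to compare yours against.

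Your argument is the standard one and is correct. The sufficiency direction is exactly right. In the necessity direction your extremal function is the correct choice, but the sentence ``$v_1(s)\le\sup_{s<\tau<\infty}v_1(\tau)$ holds for all but countably many $s$'' deserves one more line of justification, and since the supremum in \eqref{vav01} is a genuine supremum a countable exceptional set is not by itself harmless. The cleanest repair is simply to replace $g_0$ by
\[
\tilde g_0(s)\equiv\Bigl(\sup_{\tau\ge s}v_1(\tau)\Bigr)^{-1},
\]
which is still non-negative and non-decreasing and satisfies $v_1(s)\tilde g_0(s)\le1$ at \emph{every} point. Writing $h(s)\equiv\sup_{\tau>s}v_1(\tau)$, one has $v_1(s)\le h(s^-)$ for all $s$ (since $h(r)\ge v_1(s)$ whenever $r<s$), so $\tilde g_0$ and $g_0$ can differ only at discontinuity points of the monotone function $h$; that set being countable, $H^{\ast}_{w}\tilde g_0(t)=H^{\ast}_{w}g_0(t)$ for every $t$, and the left-hand side of \eqref{vav01} is still exactly $B$. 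Your $\varepsilon$-shift also works, but the pointwise limit of $g_0^{\varepsilon}$ is $1/\sup_{\tau\ge s}v_1(\tau)$ rather than $g_0$ itself, so the monotone-convergence step lands on $\tilde g_0$ anyway.
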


\begin{remark}\label{rem2.3.}
In \eqref{vav01} and \eqref{vav02} it will be understood
that $\frac{1}{\i}\equiv 0$ and $0 \cdot \i\equiv 0$.
See \cite[Theroem 1]{GulAJM2013} as well for some application.
\end{remark}

The following statement, extending the results
in T.~Mizuhara and E. Nakai \cite{Mi, Nakai94},
was proved in V.S. Guliyev \cite{GulDoc};
see also \cite{GulBook, GulJIA}.
\begin{proposition} \label{nakaiVagif0}
Let $1 \le p < \infty$. Moreover,
let $\phi_1$, $\phi_2$ be positive measurable functions satisfying
\begin{equation} \label{MizN}
\int_t^{\infty} \phi_1(x,\tau)\frac{d\tau}{\tau} \lesssim \phi_2(x,t)
\end{equation}
for all $t>0$. Then, for $p>1$, $M$ is bounded from ${\mathcal M}_{p,\phi_1}({\mathbb R}^n)$ to ${\mathcal M}_{p,\phi_2}({\mathbb R}^n)$ and, for $p=1$, $M$ is bounded
from ${\mathcal M}_{1,\phi_1}({\mathbb R}^n)$ to $W{\mathcal M}_{1,\phi_2}({\mathbb R}^n)$.
\end{proposition}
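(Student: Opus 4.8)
The plan is to run the standard decomposition of the averaging cube into a dilate of the given cube and its complement, feed the local part to the classical theory of $M$ on Lebesgue spaces (the $L^p$ bound when $p>1$, the weak $(1,1)$ bound when $p=1$), control the tail by a single pointwise estimate, and then absorb the resulting factor $\phi_1$ into $\phi_2$ by means of hypothesis \eqref{MizN}. As everywhere in this paper I would work under the structural assumption $\phi_1,\phi_2\in{\mathcal G}_p$, so that each $\phi_i(x,\cdot)$ is non-increasing and enjoys the downward doubling $\phi_i(x,r)\le C\phi_i(x,2r)$ built into ${\mathcal G}_p$; in particular $\phi_i(3Q)\thickapprox\phi_i(Q)$ for every cube $Q$. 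Since $f\in{\mathcal M}_{p,\phi_1}({\mathbb R}^n)\subset L^p_{\rm loc}({\mathbb R}^n)$, $Mf$ is a well-defined measurable function.

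First I would fix a cube $Q\in{\mathcal Q}$ and split $f=f_0+f_\infty$ with $f_0\equiv f\chi_{3Q}$ and $f_\infty\equiv f\chi_{{\mathbb R}^n\setminus 3Q}$, so that $Mf\le Mf_0+Mf_\infty$ pointwise. The crucial elementary step is the uniform tail bound
\[
Mf_\infty(y)\le C\,\phi_1(Q)\,\|f\|_{{\mathcal M}_{p,\phi_1}}\qquad(y\in Q).
\]
Indeed, any cube $R$ with $y\in R$ and $\int_R|f_\infty|>0$ must contain a point outside $3Q$, which forces $\ell(R)>\ell(Q)$ and hence $R\subset Q(c(Q),2\ell(R))$; the average of $|f|$ over $R$ is therefore dominated by $4^n$ times the average over $Q(c(Q),2\ell(R))$, which by H\"older's inequality and the definition of the norm is at most $\phi_1(c(Q),4\ell(R))\|f\|_{{\mathcal M}_{p,\phi_1}}\le\phi_1(Q)\|f\|_{{\mathcal M}_{p,\phi_1}}$ because $\phi_1$ is non-increasing and $4\ell(R)\ge\ell(Q)$. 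Taking the supremum over such $R$ gives the displayed bound.

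Next I would treat the local part. For $1<p<\infty$, the $L^p$-boundedness of $M$ gives
\[
\left(\frac{1}{|Q|}\int_Q(Mf_0)^p\right)^{1/p}
\le|Q|^{-1/p}\|Mf_0\|_{L^p}
\lesssim|Q|^{-1/p}\|f\|_{L^p(3Q)}
\lesssim\phi_1(3Q)\|f\|_{{\mathcal M}_{p,\phi_1}}
\lesssim\phi_1(Q)\|f\|_{{\mathcal M}_{p,\phi_1}},
\]
so, combining with the tail bound, $\bigl(\tfrac{1}{|Q|}\int_Q(Mf)^p\bigr)^{1/p}\lesssim\phi_1(Q)\|f\|_{{\mathcal M}_{p,\phi_1}}$. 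Now I would invoke \eqref{MizN}: since $\phi_1$ is non-increasing,
\[
\phi_2(Q)\gtrsim\int_{\ell(Q)}^\infty\phi_1(c(Q),\tau)\,\frac{d\tau}{\tau}
\ge\int_{\ell(Q)}^{2\ell(Q)}\phi_1(c(Q),\tau)\,\frac{d\tau}{\tau}
\ge(\log 2)\,\phi_1(c(Q),2\ell(Q))
\gtrsim\phi_1(Q),
\]
the last step using downward doubling; dividing by $\phi_2(Q)$ and taking the supremum over $Q$ yields $\|Mf\|_{{\mathcal M}_{p,\phi_2}}\lesssim\|f\|_{{\mathcal M}_{p,\phi_1}}$. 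For $p=1$ the only change is that the local part is controlled by the weak $(1,1)$ inequality $\sup_{S>0}S\,|\{Mf_0>S\}|\le C\|f_0\|_{L^1}=C\|f\|_{L^1(3Q)}$: writing $A$ for the constant in the tail bound, for $T>2A$ one has $\{y\in Q:Mf(y)>T\}\subset\{Mf_0>T/2\}$ and hence $T\,|\{y\in Q:Mf(y)>T\}|\le 2\sup_{S>0}S\,|\{Mf_0>S\}|\lesssim|Q|\,\phi_1(Q)\|f\|_{{\mathcal M}_{1,\phi_1}}$, while for $T\le 2A$ the estimate $T\,|\{y\in Q:Mf(y)>T\}|\le 2A|Q|\lesssim|Q|\,\phi_1(Q)\|f\|_{{\mathcal M}_{1,\phi_1}}$ is trivial; taking the supremum over $T$, dividing by $|Q|\phi_2(Q)$, using $\phi_1(Q)\lesssim\phi_2(Q)$, and taking the supremum over $Q$ gives $\|Mf\|_{W{\mathcal M}_{1,\phi_2}}\lesssim\|f\|_{{\mathcal M}_{1,\phi_1}}$.

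I do not expect a genuine obstacle here: the scheme is classical, and the only delicate points are bookkeeping ones — verifying that every cube $R$ relevant to $Mf_\infty(y)$ for $y\in Q$ really has $\ell(R)>\ell(Q)$ (so that $R$ lies in a controlled dilate of $Q$), and tracking exactly where the monotonicity and downward doubling of $\phi_1$ enter, noting that \eqref{MizN} itself is used only to pass from the bound with $\phi_1(Q)$ to the bound with $\phi_2(Q)$. A completely parallel alternative, closer to the operators recorded just above and to \cite{GulDoc}, would be to first establish the local estimate $\|Mf\|_{L^p(Q(x,r))}\lesssim\|f\|_{L^p(Q(x,2r))}+r^{n/p}\int_{2r}^\infty\tau^{-n/p}\|f\|_{L^p(Q(x,\tau))}\,\frac{d\tau}{\tau}$ and then apply the weighted Hardy operator bound of Theorem~\ref{thm3.2.} with $w(\tau)=\tau^{-n/p-1}$ and weights $v_i$ built from $\phi_i$, the hypothesis \eqref{vav02} reducing then precisely to \eqref{MizN}.
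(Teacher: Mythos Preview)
Your argument is correct. Note, however, that the paper does not supply its own proof of Proposition~\ref{nakaiVagif0}: it is quoted from \cite{GulDoc,GulBook,GulJIA}. The closest thing to a proof in the paper is the argument for the vector-valued extension Theorem~\ref{3.4.VectV}, carried out via Lemmas~\ref{lem4.1Vect} and~\ref{lem4.1DVect} and the weighted Hardy bound Theorem~\ref{thm3.2.}; this is exactly the alternative you sketch in your final paragraph. Your \emph{main} route differs from that one: instead of the integral tail estimate $\|Mf\|_{L^p(B(x,r))}\lesssim r^{n/p}\int_{2r}^\infty\tau^{-n/p-1}\|f\|_{L^p(B(x,\tau))}\,d\tau$ followed by Theorem~\ref{thm3.2.}, you collapse the tail to the single pointwise bound $Mf_\infty(y)\lesssim\phi_1(Q)\|f\|_{{\mathcal M}_{p,\phi_1}}$ on $Q$ and then use \eqref{MizN} only in the crude consequence $\phi_1(Q)\lesssim\phi_2(Q)$. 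This is shorter and perfectly sufficient here, but it leans on the monotonicity and downward doubling of $\phi_1$ (for the tail bound, for $\phi_1(3Q)\thickapprox\phi_1(Q)$, and for the last step $\phi_1(c(Q),2\ell(Q))\gtrsim\phi_1(Q)$). The Hardy-operator route buys more: it is what makes transparent why the weaker supremal/integral conditions \eqref{eq3.6.VZ} and \eqref{eq3.6.VZInt} already suffice in Proposition~\ref{3.4.} and Theorem~\ref{3.4.VectV}, where no a~priori structure on $\phi_1$ is assumed.
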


The following statements, containing Proposition \ref{nakaiVagif0}, 
was proved by A. Akbulut, V.S. Guliyev and R. Mustafayev \cite{AkbGulMus1};
note that (\ref{MizN}) is stronger than (\ref{eq3.6.VZ}).
\begin{proposition} \label{3.4.}
Let $1\le p<\infty$ and suppose the couple $(\phi_1,\phi_2)$ satisfies the following condition{\rm;}
\begin{equation}\label{eq3.6.VZ}
\sup\limits_{\tau>t}
\left(
\inf\limits_{\tau<s<\infty}\phi_1(x,s) s^{\frac{n}{p}}\tau^{-\frac{n}{p}}
\right) \lesssim \phi_2(x,t),
\end{equation}
where the impicit constant does not depend on $x$ and $t$.
Then, for $p>1$, $M$ is bounded from ${\mathcal M}_{p,\phi_1}({\mathbb R}^n)$ to ${\mathcal M}_{p,\phi_2}({\mathbb R}^n)$ and, for $p=1$,
$M$ is bounded from ${\mathcal M}_{1,\phi_1}({\mathbb R}^n)$ to $W{\mathcal M}_{1,\phi_2}({\mathbb R}^n)$.
Namely, for $p>1$,
\begin{equation*}
\|Mf\|_{{\mathcal M}_{p,\phi_2}} \lesssim \|f\|_{{\mathcal M}_{p,\phi_1}}
\end{equation*}
for all $f \in {\mathcal M}_{p,\phi_1}({\mathbb R}^n)$ and for $1\le p<\infty$,
\begin{equation*}
\|Mf\|_{W{\mathcal M}_{p,\phi_2}} \lesssim \|f\|_{{\mathcal M}_{p,\phi_1}}
\end{equation*}
for all $f \in {\mathcal M}_{p,\phi_1}({\mathbb R}^n)$.
\end{proposition}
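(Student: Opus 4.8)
The plan is to reduce the statement to a purely local estimate for $Mf$ on an arbitrary cube, and then to feed that estimate into the supremal-operator boundedness criterion of Theorem \ref{thm5.1}. The local estimate I aim for is: for every cube $Q(x_0,r)$,
\begin{equation}\label{eq:plan-local}
\left(\frac{1}{|Q(x_0,r)|}\int_{Q(x_0,r)}(Mf(y))^p\,dy\right)^{1/p}
\lesssim
\sup_{t>r}\left(\frac{1}{|Q(x_0,t)|}\int_{Q(x_0,t)}|f(y)|^p\,dy\right)^{1/p}
\qquad(1<p<\infty),
\end{equation}
together with its weak-type counterpart for $p=1$, namely $T\,|\{y\in Q(x_0,r):Mf(y)>T\}|\lesssim|Q(x_0,r)|\sup_{t>r}\frac{1}{|Q(x_0,t)|}\int_{Q(x_0,t)}|f|$ for all $T>0$; the implicit constants must be independent of $x_0$, $r$, $f$ and --- crucially --- of $\phi_1,\phi_2$. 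Condition (\ref{eq3.6.VZ}) is invoked only at the very last step, through Theorem \ref{thm5.1}.

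To prove (\ref{eq:plan-local}) I would fix $Q(x_0,r)$ and split $f=f_1+f_2$ with $f_1\equiv f\chi_{Q(x_0,2r)}$ and $f_2\equiv f\chi_{\R^n\setminus Q(x_0,2r)}$. For $f_1$ the classical strong $(p,p)$ bound for $M$ (the weak $(1,1)$ bound when $p=1$) gives $\|Mf_1\|_{L^p(Q(x_0,r))}\le\|Mf_1\|_{L^p(\R^n)}\lesssim\|f\|_{L^p(Q(x_0,2r))}$, and since $|Q(x_0,2r)|\thickapprox|Q(x_0,r)|$ the right-hand side of (\ref{eq:plan-local}) already dominates this (take $t=2r$). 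For $f_2$ I use the elementary observation that if $y\in Q(x_0,r)$ and a cube $R\ni y$ contributes to $Mf_2(y)$ --- i.e.\ $R$ meets $\R^n\setminus Q(x_0,2r)$ --- then $\ell(R)>r$ and $R\subset Q(x_0,2\ell(R))$; hence $\frac{1}{|R|}\int_R|f_2|\lesssim\frac{1}{|Q(x_0,2\ell(R))|}\int_{Q(x_0,2\ell(R))}|f|\le\sup_{t>r}\frac{1}{|Q(x_0,t)|}\int_{Q(x_0,t)}|f|$, so that after Jensen's inequality $Mf_2(y)\le C\sup_{t>r}\big(\frac{1}{|Q(x_0,t)|}\int_{Q(x_0,t)}|f|^p\big)^{1/p}$ for \emph{every} $y\in Q(x_0,r)$. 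This pointwise $L^\infty(Q(x_0,r))$ bound on $Mf_2$ fits into the right-hand side of (\ref{eq:plan-local}) and, combined with the weak $(1,1)$ estimate for $f_1$, also yields the weak-type local estimate for $p=1$; adding the two pieces proves (\ref{eq:plan-local}).

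Next I pass from (\ref{eq:plan-local}) to the Morrey bound. For $x\in\R^n$ set $h_x(t)\equiv\|f\|_{L^p(Q(x,t))}$; this is nondecreasing in $t$ and tends to $0$ as $t\to0+$ (since $f\in{\mathcal M}_{p,\phi_1}\subset L^p_{\rm loc}(\R^n)$), so $h_x\in\A$. Writing $u(t)\equiv t^{-n/p}$ and using $\big(\frac{1}{|Q(x,t)|}\int_{Q(x,t)}|f|^p\big)^{1/p}\thickapprox t^{-n/p}h_x(t)$ together with (\ref{eq:plan-local}),
\[
\|Mf\|_{{\mathcal M}_{p,\phi_2}}
\thickapprox\sup_{x\in\R^n}\sup_{r>0}\frac{1}{\phi_2(x,2r)}\Big(\frac{1}{|Q(x,r)|}\int_{Q(x,r)}(Mf)^p\Big)^{1/p}
\lesssim\sup_{x\in\R^n}\big\|v_2\,\overline{S}_u h_x\big\|_{L_\infty(0,\infty)},
\]
with $v_2(r)\equiv\phi_2(x,2r)^{-1}$, while the definition of the Morrey norm reads $\|h_x\|_{L_{\infty,v_1}(0,\infty)}\le\|f\|_{{\mathcal M}_{p,\phi_1}}$ with $v_1(t)\equiv\big((2t)^{n/p}\phi_1(x,2t)\big)^{-1}$. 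It thus suffices that $\overline{S}_u:L_{\infty,v_1}(0,\infty)\to L_{\infty,v_2}(0,\infty)$ be bounded on the cone $\A$ with constant independent of $x$, and by Theorem \ref{thm5.1} this is equivalent to $\big\|v_2\,\overline{S}_u(\|v_1\|_{L_\infty(\cdot,\infty)}^{-1})\big\|_{L_\infty(0,\infty)}<\infty$ uniformly in $x$. Unwinding this last quantity --- a change of variables in the scale parameter absorbs the factors of $2$ produced by the radius/sidelength convention --- one sees that it equals a constant multiple of $\sup_{r>0}\frac{1}{\phi_2(x,r)}\sup_{\tau>r}\tau^{-n/p}\inf_{\tau<s<\infty}\phi_1(x,s)s^{n/p}$, which is precisely what (\ref{eq3.6.VZ}) controls. (Alternatively one may quote Theorem \ref{thm3.2.} with $w\equiv1$.) The case $p=1$ is handled identically, using the weak-type local estimate and $u(t)=t^{-n}$ to get $\|Mf\|_{W{\mathcal M}_{1,\phi_2}}\lesssim\sup_x\|v_2\,\overline{S}_u h_x\|_{L_\infty(0,\infty)}$; and the strong statement for $p>1$ a fortiori implies the weak one for $1\le p<\infty$.

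Most of this is routine: the classical mapping properties of $M$, the algebra with $h_x$, the weak-type bookkeeping. The two steps that need real care are, first, the geometric estimate in the second paragraph --- verifying that only scales $\gtrsim r$ ``see'' the complement $\R^n\setminus Q(x_0,2r)$, so that the tail of $Mf_2$ telescopes into $\sup_{t>r}$ --- and, second, checking that the abstract criterion (\ref{eq6.8}) of Theorem \ref{thm5.1} unwinds exactly to the hypothesis (\ref{eq3.6.VZ}). I also tacitly use a mild non-degeneracy of $\phi_1$ (e.g.\ $t\mapsto t^{n/p}\phi_1(x,t)$ bounded away from $0$ on every $[\delta,\infty)$, which is automatic when $\phi_1\in{\mathcal G}_p$) so that Theorem \ref{thm5.1} applies.
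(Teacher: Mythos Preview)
Your argument is correct and follows essentially the approach of the cited reference \cite{AkbGulMus1}: the same near/far splitting $f=f\chi_{Q(x_0,2r)}+f\chi_{\R^n\setminus Q(x_0,2r)}$, the same geometric observation that only cubes of scale $\gtrsim r$ see the far piece, and then the reduction to the supremal-operator criterion of Theorem~\ref{thm5.1}, which unwinds exactly to \eqref{eq3.6.VZ}. One small remark: in the paper the analogous vector-valued result (Theorem~\ref{3.4.VectV}) is proved via the \emph{integral} form of the local estimate $\|Mf\|_{L^p(Q(x,r))}\lesssim r^{n/p}\int_{2r}^\infty t^{-n/p-1}\|f\|_{L^p(Q(x,t))}\,dt$ together with the Hardy-operator Theorem~\ref{thm3.2.}, because there the hypothesis is the integral condition \eqref{eq3.6.VZInt}; your choice of the supremal form and Theorem~\ref{thm5.1} is the correct parallel for the supremal hypothesis \eqref{eq3.6.VZ}.
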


From this proposition,
when $\phi_1=\phi_2=\phi$,
we have the following boundedness.
We know that there is no requirement
when we consider the boundedness
of the maximal operator \cite[Theorem 2.3]{Sa08-1}
when $\phi$ is independent of $x$.
Proposition \ref{3.4.} naturally extends the assertion above.
\begin{corollary}{\rm \cite[Theorem 1]{Nakai94}, \cite[Theorem 2.3]{Sa08-1}} \label{jena01}
Let $1 \le p<\infty$ and $\phi \in {\mathcal G}_p$.
\begin{enumerate}
\item
Let $1< p<\infty$.
Then
$\|Mf\|_{{\mathcal M}_{p,\phi}}\lesssim \|f\|_{{\mathcal M}_{p,\phi}}$
for all $f \in {\mathcal M}_{p,\phi}({\mathbb R}^n)$.
\item
Let $1\le p<\infty$.
Then
$\|Mf\|_{W{\mathcal M}_{p,\phi}} \lesssim \|f\|_{{\mathcal M}_{p,\phi}}$
for all $f \in {\mathcal M}_{p,\phi}({\mathbb R}^n)$.
\end{enumerate}
\end{corollary}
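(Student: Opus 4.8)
The plan is to obtain this corollary directly from Proposition \ref{3.4.} by specializing to $\phi_1 = \phi_2 = \phi$, so the only thing I need to verify is that every $\phi \in {\mathcal G}_p$ satisfies the Zygmund-type condition (\ref{eq3.6.VZ}). First I would fix $x \in {\mathbb R}^n$ and $t>0$ and bound the inner infimum $\inf_{\tau<s<\infty}\phi(x,s)s^{n/p}$ from above. Since $\phi(x,\cdot)$ is non-increasing, $\phi(x,s) \le \phi(x,\tau)$ for every $s>\tau$, and hence
\[
\inf_{\tau<s<\infty}\phi(x,s)s^{n/p} \le \inf_{\tau<s<\infty}\phi(x,\tau)s^{n/p} = \phi(x,\tau)\tau^{n/p}.
\]
Note that no continuity of $\phi$ is needed here, since I am only estimating an infimum from above.

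Next I would substitute this into the left-hand side of (\ref{eq3.6.VZ}): for every $\tau>t$,
\[
\Big(\inf_{\tau<s<\infty}\phi(x,s)s^{n/p}\Big)\tau^{-n/p} \le \phi(x,\tau) \le \phi(x,t),
\]
the last step using monotonicity of $\phi(x,\cdot)$ together with $\tau>t$. Taking the supremum over $\tau>t$ shows that (\ref{eq3.6.VZ}) holds with implied constant $1$, uniformly in $x$. Consequently Proposition \ref{3.4.} applies with $\phi_1=\phi_2=\phi$ and yields both assertions: the strong $(p,p)$ bound $\|Mf\|_{{\mathcal M}_{p,\phi}}\lesssim\|f\|_{{\mathcal M}_{p,\phi}}$ for $1<p<\infty$, and the weak-type bound $\|Mf\|_{W{\mathcal M}_{p,\phi}}\lesssim\|f\|_{{\mathcal M}_{p,\phi}}$ for $1\le p<\infty$.

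There is essentially no obstacle in this argument; the only point requiring care is that one should resist routing the proof through Proposition \ref{nakaiVagif0} and the Dini-type condition (\ref{MizN}), since (\ref{MizN}) can fail for $\phi\in{\mathcal G}_p$ — for instance $\phi\equiv 1$, corresponding to $L^\infty$ — whereas (\ref{eq3.6.VZ}), being weaker as recorded in Remark \ref{rem2.3.}, is automatic. It is also worth observing that only the "decreasing in the second variable" half of the definition of ${\mathcal G}_p$ is used in the verification; the almost-increasing property of $t\mapsto t^{n/p}\phi(x,t)$ plays no role in this particular corollary (though it would be relevant if one wished to show the condition is sharp).
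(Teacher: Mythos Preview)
Your argument is correct and follows exactly the route the paper intends: the text preceding the corollary already says ``From this proposition, when $\phi_1=\phi_2=\phi$, we have the following boundedness,'' and you have simply made explicit the verification of (\ref{eq3.6.VZ}) that the paper leaves to the reader. One minor slip: the fact that (\ref{MizN}) is stronger than (\ref{eq3.6.VZ}) is recorded in the sentence introducing Proposition~\ref{3.4.}, not in Remark~\ref{rem2.3.} (which only fixes the conventions $1/\infty=0$ and $0\cdot\infty=0$).
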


By using the Planchrel-P\'{o}lya Nilokiski'i inequality \cite{PlPo37},
we have the following estimate of the Peetre maximal operator.

\begin{theorem}\label{Trburgul1}
Let $0 < p < \infty $, $0 < r \le p$ and
suppose that the couple $(\phi_1,\phi_2)$ satisfies the condition 
\begin{equation}\label{eq3.6.VZM}
\sup\limits_{\tau>t}\left[
\left(
\inf\limits_{\tau<s<\infty}
\phi_1(x,s) s^{\frac{nr}{p}}
\right)\tau^{-\frac{nr}{p}}\right]
\lesssim
\phi_2(x,t),
\end{equation}
where the implicit constant does not depend on $x$ and $t$.
Let $\Omega $ be a compact set, $d$ be the diameter of $\Omega$.
\begin{enumerate}
\item[$(1)$]
If $r < p < \infty $, $f$ belongs to ${\mathcal M}_{p,\phi_1}({\mathbb R}^n)$
and $\supp{\mathcal F}f \subset \Omega$, then
$$
 \left\|\sup_{y \in \Rn}\frac{|f(\cdot-y)|}{1+ d |y|^{n/r}}
 \right\|_{{\mathcal M}_{p,\phi_2}} \lesssim \| f \|_{{\mathcal M}_{p,\phi_1}}.
$$
\item[$(2)$]
If $p = r$, $f$ belongs to ${\mathcal M}_{p,\phi_1}({\mathbb R}^n)$
and $\supp{\mathcal F}f \subset \Omega$, then
$$
 \left\|\sup_{y \in \Rn}\frac{|f(\cdot-y)|}{1+ d |y|^{n/r}}
 \right\|_{W{\mathcal M}_{r,\phi_2}} \lesssim \| f \|_{{\mathcal M}_{r,\phi_1}}.
$$
\end{enumerate}
\end{theorem}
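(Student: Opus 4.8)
The plan is to reduce the statement about the Peetre maximal operator
$$
f \mapsto \sup_{y \in \R^n}\frac{|f(\cdot-y)|}{1+d|y|^{n/r}}
$$
to the boundedness of the ordinary Hardy--Littlewood maximal operator $M$ on ${\mathcal M}_{p,\phi_1}({\mathbb R}^n)$, which is already available from Proposition \ref{3.4.} (after passing to the $r$-th power to turn $\phi_1$ into an admissible weight for the Morrey exponent $p/r$). The bridge is the Plancherel--P\'olya--Nikolski\u{\i} inequality: if $\supp{\mathcal F}f \subset \Omega$ with $\Omega$ compact of diameter $d$, then there is a constant depending only on $n$ and $r$ such that, pointwise in $x$,
$$
\sup_{y\in\R^n}\frac{|f(x-y)|^r}{1+d^r|y|^{n}} \lesssim M(|f|^r)(x).
$$
This is the classical maximal-function estimate for band-limited functions; the scaling factor $d$ enters exactly so that the inequality is dilation invariant. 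Granting this pointwise bound, part (1) follows by raising to the power $1/r$ and applying Proposition \ref{3.4.} with $p$ replaced by $p/r$ and $\phi_i$ replaced by $\phi_i(\cdot)^{1/r}$ — one checks that condition (\ref{eq3.6.VZM}) for $(\phi_1,\phi_2)$ at exponent $p$ is precisely condition (\ref{eq3.6.VZ}) for $(\phi_1^{1/r},\phi_2^{1/r})$ at exponent $p/r$, so $M$ maps ${\mathcal M}_{p/r,\phi_1^{1/r}}$ boundedly into ${\mathcal M}_{p/r,\phi_2^{1/r}}$ when $p/r>1$; and ${\mathcal M}_{p/r,\phi_i^{1/r}}$-membership of $|f|^r$ is the same as ${\mathcal M}_{p,\phi_i}$-membership of $f$.

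For part (2), the exponent $p/r$ equals $1$, so Proposition \ref{3.4.} only gives the weak-type bound $M:{\mathcal M}_{1,\phi_1^{1/r}}\to W{\mathcal M}_{1,\phi_2^{1/r}}$. Translating back through the $r$-th root, $W{\mathcal M}_{1,\phi_2^{1/r}}$-control of $M(|f|^r)$ becomes exactly $W{\mathcal M}_{r,\phi_2}$-control of the Peetre maximal function, because $\|g\|_{W{\mathcal M}_{r,\phi_2}}^r \approx \| |g|^r\|_{W{\mathcal M}_{1,\phi_2^{1/r}}}$ (the level sets $\{|g|>T\}$ and $\{|g|^r>T^r\}$ coincide). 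Thus the weak-type conclusion in (2) is forced by the weak endpoint of $M$, and there is no way to upgrade it since $M$ itself fails to be strong at $p=1$.

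The main obstacle — and the only nonroutine point — is establishing the pointwise Plancherel--P\'olya--Nikolski\u{\i} inequality with the correct geometric normalization, i.e. the factor $1+d^r|y|^n$ in the denominator rather than $1+|y|^n$. The cleanest route is a rescaling: by replacing $f(x)$ with $f(x/d)$ one reduces to the case $d=1$, where $\supp{\mathcal F}f$ lies in a fixed ball and one uses a Schwartz function $\psi$ with $\widehat\psi\equiv 1$ on that ball to write $f=f*\psi$; then
$$
|f(x-y)| = |(f*\psi)(x-y)| \le \int_{\R^n}|f(x-y-z)|\,|\psi(z)|\,dz,
$$
and splitting the integral over dyadic annuli and using the rapid decay of $\psi$ together with the definition of $M$ yields $\sup_y |f(x-y)|^r/(1+|y|^n)\lesssim M(|f|^r)(x)$ for $0<r\le p$. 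Care is needed only in tracking how $d$ propagates through the rescaling of $M$ (which is dilation invariant) and through the $r$-th powers; once that bookkeeping is done, both parts of the theorem drop out of Proposition \ref{3.4.} immediately.
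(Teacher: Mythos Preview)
Your proposal is correct and is essentially the paper's own proof: invoke the pointwise Plancherel--P\'olya--Nikolski\u{\i} inequality $\sup_y|f(x-y)|/(1+d|y|^{n/r})\lesssim [M(|f|^r)(x)]^{1/r}$ (which the paper simply cites from \cite{Trieb1} rather than re-deriving) and then feed it into Proposition~\ref{3.4.} at exponent $p/r$, getting the strong bound when $p/r>1$ and the weak bound when $p/r=1$. One cosmetic slip in your bookkeeping: the scaling identity is $\||f|^r\|_{{\mathcal M}_{p/r,\phi^r}}=\|f\|_{{\mathcal M}_{p,\phi}}^r$, so the weights handed to Proposition~\ref{3.4.} should be $\phi_i^{r}$, not $\phi_i^{1/r}$; once that exponent is flipped your translation of the hypothesis into condition~(\ref{eq3.6.VZ}) at level $p/r$ goes through.
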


\begin{proof}
We have
\begin{equation}\label{ineq07}
\frac{\left| g(x-y) \right|}{1+|y|^{n/r}} \lesssim
\left[M \left(|g|^r\right)(x) \right]^{1/r}
\end{equation}
for all $x, \, y \in \Rn$ and $g \in {\mathcal S}'(\Rn)$
such that ${\mathcal F}g$ is supported on $\Omega$;
see \cite[p. 22]{Trieb1}.
By \eqref{ineq07} and Proposition \ref{3.4.},
we conclude Theorem \ref{Trburgul1}.
\end{proof}

\begin{remark}
Theorem \ref{Trburgul1} is proved in \cite{SaTa1,TaXu}
in the case of classical Morrey spaces.
\end{remark}

For $p\in[1,\infty)$,
we have a counterpart to Corollary \ref{cor:150308-1}:
\begin{lemma}
Let $1<p\le \infty$ and $\phi \in {\mathcal G}_p$
Then for all $\kappa \in {\mathcal S}({\mathbb R}^n)$,
\begin{equation}\label{eq:131211-1}
\int_{{\mathbb R}^n}|\kappa(x)f(x)|\,dx
\lesssim
\|f\|_{{\mathcal M}_{p,\phi}}
\sup_{x \in {\mathbb R}^n}(1+|x|)^{2n+1}|\kappa(x)|.
\end{equation}
\end{lemma}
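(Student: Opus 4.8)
The plan is to decompose $\mathbb{R}^n$ into the unit cubes $Q_m\equiv m+[0,1]^n$ ($m\in\mathbb{Z}^n$) and to estimate $\int_{Q_m}|\kappa f|\,dx$ cube by cube. Denoting by $p'$ the conjugate exponent of $p$ (so $p'=1$ when $p=\infty$), Hölder's inequality on $Q_m$ together with the definition of $\|\cdot\|_{{\mathcal M}_{p,\phi}}$ gives
\[
\int_{Q_m}|\kappa(x)f(x)|\,dx
\le\Big(\int_{Q_m}|f(x)|^p\,dx\Big)^{1/p}\Big(\int_{Q_m}|\kappa(x)|^{p'}\,dx\Big)^{1/p'}
\le|Q_m|\,\phi(Q_m)\,\|f\|_{{\mathcal M}_{p,\phi}}\sup_{x\in Q_m}|\kappa(x)|,
\]
where I used $\big(\int_{Q_m}|f|^p\big)^{1/p}\le|Q_m|^{1/p}\phi(Q_m)\|f\|_{{\mathcal M}_{p,\phi}}$ and $\big(\int_{Q_m}|\kappa|^{p'}\big)^{1/p'}\le|Q_m|^{1/p'}\sup_{Q_m}|\kappa|$; note $|Q_m|=1$. (When $p=\infty$ the factor $\big(\int_{Q_m}|f|^p\big)^{1/p}$ is read as $\|f\|_{L^\infty(Q_m)}$ and the same bound holds.) Summing over $m\in\mathbb{Z}^n$, it suffices to prove
\[
\sum_{m\in\mathbb{Z}^n}\phi(Q_m)\sup_{x\in Q_m}|\kappa(x)|\lesssim\sup_{x\in\mathbb{R}^n}(1+|x|)^{2n+1}|\kappa(x)|.
\]

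To handle the right-hand side I will use two elementary bounds. First, since every $x\in Q_m$ satisfies $1+|x|\gtrsim1+|m|$ with a constant depending only on $n$, we get $\sup_{Q_m}|\kappa|\lesssim(1+|m|)^{-(2n+1)}\sup_{x}(1+|x|)^{2n+1}|\kappa(x)|$. Second, I claim $\phi(Q_m)\lesssim\phi({\rm o},1)(1+|m|)^{n/p}$: indeed, $\phi\in{\mathcal G}_p$ is decreasing in its second variable and, by our standing assumption, satisfies $(\ref{eq:150311-1})$, so applying $(\ref{eq:150311-1})$ with $y=c(Q_m)$, $s=1$, $x={\rm o}$ and $r=\|c(Q_m)\|_\infty+1$ (which satisfies $\|x-y\|_\infty=r-s$) yields $\phi(Q_m)=\phi(c(Q_m),1)\le\phi({\rm o},r)\,r^{n/p}\le\phi({\rm o},1)\,r^{n/p}$, and $r\lesssim1+|m|$.

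Combining these,
\[
\sum_{m\in\mathbb{Z}^n}\phi(Q_m)\sup_{x\in Q_m}|\kappa(x)|
\lesssim\phi({\rm o},1)\Big(\sup_{x}(1+|x|)^{2n+1}|\kappa(x)|\Big)\sum_{m\in\mathbb{Z}^n}(1+|m|)^{\frac{n}{p}-(2n+1)},
\]
and the lattice sum converges because $\frac{n}{p}-(2n+1)\le n-(2n+1)=-(n+1)<-n$. This proves $(\ref{eq:131211-1})$. I do not anticipate a genuine obstacle: the only slightly delicate point is converting $\phi(Q_m)$ into a power of $|m|$, and $(\ref{eq:150311-1})$ together with the monotonicity built into ${\mathcal G}_p$ is tailored precisely for this, after which one merely needs the Schwartz decay exponent $2n+1$ to dominate $\frac{n}{p}+n$, which it does since $p>1$.
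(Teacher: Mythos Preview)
Your proof is correct and follows a genuinely different route from the paper's. The paper decomposes $\mathbb{R}^n$ into dyadic annuli $[-(j+1),(j+1)]^n\setminus[-j,j]^n$ and, to control $\int_{[-j,j]^n}|f|$, invokes the boundedness of the Hardy--Littlewood maximal operator $M$ on ${\mathcal M}_{p,\phi}({\mathbb R}^n)$ (Corollary~\ref{jena01}, which needs $p>1$): from $\frac{1}{|[-j,j]^n|}\int_{[-j,j]^n}|f|\le Mf(x)$ for $x\in[-1,1]^n$ one extracts $\int_{[-j,j]^n}|f|\lesssim j^n\|f\|_{{\mathcal M}_{p,\phi}}$, and then sums $\sum_j (j+1)^n j^{-(2n+1)}<\infty$. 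Your argument bypasses the maximal operator entirely: you tile by unit cubes, apply H\"older and the Morrey norm definition on each cube, and then use the structural inequality \eqref{eq:150311-1} to bound $\phi(Q_m)\lesssim(1+|m|)^{n/p}$, reducing matters to the convergent lattice sum $\sum_m(1+|m|)^{n/p-(2n+1)}$. Your approach is more elementary and makes transparent exactly where the hypothesis $p>1$ enters (the exponent $n/p-(2n+1)<-n$), whereas the paper's route packages this into the $M$-boundedness and gives a cleaner growth rate $j^n$ that does not depend on $p$.
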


\begin{proof}
We decompose the left-hand side as follows:
\begin{align*}
&\int_{\R^n} |\kappa(x)f(x)| \ dx \\
&\leq \int_{[-1,1]^n} |\kappa(x)f(x)| \ dx
+\sum\limits_{j=1}^{\infty} \int_{[-(j+1),(j+1)]^n \setminus [-j,j]^n} |\kappa(x)f(x)| \ dx\\
&\leq
\left(\sup\limits_{x\in [-1,1]^n} |\kappa(x)| \right)
\|f\|_{L_1([-1,1]^n)}
+\sum\limits_{j=1}^{\infty} \int\limits_{[-(j+1),(j+1)]^n \setminus [-j,j]^n}
\frac{\left|x^{2n+1}\kappa(x) \right| |f(x)|}{j^{2n+1}} \ dx\\
&\leq \left(\sup_{x \in {\mathbb R}^n}(1+|x|)^{2n+1}|\kappa(x)|\right)
\left(\|f\|_{L^1([-1,1]^n)}+ \sum\limits_{j=1}^{\infty}
\int\limits_{[-(j+1), (j+1)]^n \setminus [-j,j]^n}
\frac{|f(x)|}{j^{2n+1}} \ dx\right).
\end{align*}
By the definition of the maximal operator, for all $j \in {\mathbb N}$, we have
\[
\frac{1}{|[-j,j]^n|}\int_{[-j,j]^n}|f(y)|\,dy
\le
Mf(x), \quad x \in [-1,1]^n.
\]
Thus,
\[
 \frac{1}{\phi({\rm o},2)2^{n/p}}
\left\|\frac{1}{|[-j,j]^n|}\int_{[-j,j]^n}|f(y)|\,dy
\right\|_{L^p([-1,1]^n)}
\le
\|Mf\|_{{\mathcal M}_{p,\phi}}
\lesssim
\|f\|_{{\mathcal M}_{p,\phi}}.
\]
This means
$\displaystyle
\int_{[-j,j]^n}|f(y)|\,dy
\lesssim j^n\|f\|_{{\mathcal M}_{p,\phi}}.
$
Thus,
\begin{align*}
\int_{\R^n} |\kappa(x)f(x)| dx &\lesssim \|f\|_{{\mathcal M}_{p,\phi}} \sup\limits_{x\in \R^n} (1+|x|)^{2n+1} |\kappa(x)|
\left( 1+\sum\limits_{j=1}^{\infty} \frac{(j+1)^n}{j^{2n+1}}\right)\\
&\sim \|f\|_{{\mathcal M}_{p,\phi}} \sup\limits_{x\in \R^n} (1+|x|)^{2n+1} |\kappa(x)|,
\end{align*}
as was to be shown.
\end{proof}

\subsection{Vector-valued boundedness of the maximal operator}

Our aim here
is to extend the Fefferman-Stein vector-valued inequality
to our function spaces
for $M$ in addition to Corollary \ref{burgul1lq};
\begin{equation}\label{eq:FS}
\left\|\left(\sum_{j=1}^\infty Mf_j{}^u\right)^{\frac1u}\right\|_{L^p}
\lesssim
\left\|\left(\sum_{j=1}^\infty |f_j|^u\right)^{\frac1u}\right\|_{L^p},
\end{equation}
where $1<p<\infty$ and $1<u \le \infty$;
see \cite{FeffSt} for the proof of (\ref{eq:FS}).
When $q=\infty$, it is understood that (\ref{eq:FS}) reads;
\[
\left\|\sup_{j \in {\mathbb N}} Mf_j\right\|_{L^p}
\lesssim
\left\|\sup_{j \in {\mathbb N}} |f_j|\right\|_{L^p}.
\]


Our main result here is as follows:
\begin{theorem} \label{3.4.VectV}
Let $1\le q \le \infty$ and suppose that
the couple $(\phi_1,\phi_2)$ satisfies the condition{\rm;}
\begin{equation}\label{eq3.6.VZInt}
\int_t^{\i}
\left(\inf\limits_{\tau<s<\infty}\phi_1(x,s)s^{\frac{n}{p}}\right)
\frac{d\tau}{\tau^{\frac{n}{p}+1}}
\lesssim \phi_2(x,t),
\end{equation}
where the implicit constant does not depend on $x$ and $t$.
\begin{enumerate}
\item[$(1)$]
For $1< p<\infty$,
$M$ is bounded from ${\mathcal M}_{p,\phi_1}(l_{q},{\mathbb R}^n)$
to ${\mathcal M}_{p,\phi_2}(l_{q},{\mathbb R}^n)$,
i.e.,
\begin{equation*}
\|MF\|_{{\mathcal M}_{p,\phi_2}(l_{q})} \lesssim \|F\|_{{\mathcal M}_{p,\phi_1}(l_{q})}
\end{equation*}
holds for all $F \in {\mathcal M}_{p,\phi_1}(l_{q},{\mathbb R}^n)$.
\item[$(2)$]
For $1\le p<\infty$,
$M$ is bounded from ${\mathcal M}_{p,\phi_1}(l_{q},{\mathbb R}^n)$
to $W{\mathcal M}_{p,\phi_2}(l_{q},{\mathbb R}^n)$,
i.e.,
\begin{equation*}
\|MF\|_{W{\mathcal M}_{1,\phi_2}(l_{q})} \lesssim \|F\|_{{\mathcal M}_{1,\phi_1}(l_{q})}
\end{equation*}
holds for all $F \in {\mathcal M}_{1,\phi_1}(l_{q},{\mathbb R}^n)$.
\end{enumerate}
\end{theorem}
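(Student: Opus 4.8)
The plan is to reduce the whole statement to a single \emph{local} estimate for $MF$ on an arbitrary cube, and then to run the scalar argument behind Proposition \ref{3.4.} with the weighted Hardy operator of Theorem \ref{thm3.2.} playing the role that the supremal operator of Theorem \ref{thm5.1} plays there. Concretely, I would first prove that for every cube $Q=Q(x_0,r)$,
\[
\left(\frac{1}{|Q|}\int_Q\|MF(y)\|_{l_q}^p\,dy\right)^{1/p}
\lesssim
\int_r^\infty
\left(\frac{1}{|Q(x_0,\rho)|}\int_{Q(x_0,\rho)}\|F(z)\|_{l_q}^p\,dz\right)^{1/p}
\frac{d\rho}{\rho}
\qquad(1<p<\infty),
\]
with the analogous bound for $p=1$ in which the left-hand side is replaced by $|Q|^{-1}\sup_{T>0}T\,|\{y\in Q:\|MF(y)\|_{l_q}>T\}|$ and the inner exponent $p$ on the right by $1$. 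Granting this, set $g_{x_0}(\rho):=\rho^{n/p}\bigl(\frac{1}{|Q(x_0,\rho)|}\int_{Q(x_0,\rho)}\|F\|_{l_q}^p\bigr)^{1/p}$; this is a fixed multiple of $\|F\|_{L^p(Q(x_0,\rho))(l_q)}$, hence non-decreasing in $\rho$, and the right-hand integral above equals $(H^{\ast}_{w}g_{x_0})(r)$ with $w(\rho)=\rho^{-n/p-1}$. Dividing by $\phi_2(x_0,r)$ and taking $\sup_{r>0}$, Theorem \ref{thm3.2.} applied for each fixed $x_0$ with $v_2(\rho)=\phi_2(x_0,\rho)^{-1}$ and $v_1(\rho)=\rho^{-n/p}\phi_1(x_0,\rho)^{-1}$ (which is bounded outside a neighbourhood of the origin since $\rho^{n/p}\phi_1(x_0,\rho)$ is almost increasing) shows that the constant $B$ of $(\ref{vav02})$ is, for each $x_0$, exactly the left-hand side of $(\ref{eq3.6.VZInt})$, hence uniformly bounded; since $\sup_\rho v_1(\rho)g_{x_0}(\rho)$ is the $\phi_1$-Morrey quantity of $F$ centred at $x_0$, a final $\sup_{x_0}$ yields $(1)$ and, in weak form, $(2)$. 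For $1<p<\infty$, $(2)$ is contained in $(1)$, so only $p=1$ needs the weak-type version.

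For the local estimate I would split $F=F^0+F^\infty$ with $F^0=F\chi_{2Q}$, so $MF\le MF^0+MF^\infty$ pointwise. The near part is immediate: the vector-valued Fefferman--Stein inequality $(\ref{eq:FS})$ gives $\|MF^0\|_{L^p(l_q)}\lesssim\|F^0\|_{L^p(l_q)}=\|F\|_{L^p(2Q)(l_q)}$ (and its weak $(1,1)$ analogue for $p=1$), and $|Q|^{-1/p}\|F\|_{L^p(2Q)(l_q)}$ is controlled by the right-hand integral because $\rho\mapsto\|F\|_{L^p(Q(x_0,\rho))(l_q)}$ is non-decreasing. For the far part, fix $y\in Q$; a cube $R\ni y$ with $\frac{1}{|R|}\int_R|f_j^\infty|\neq0$ must meet $(2Q)^{c}$, which forces $\ell(R)>r$ and $R\subset Q(x_0,2\ell(R))$, whence $\frac{1}{|R|}\int_R|f_j^\infty|\lesssim\frac{1}{|Q(x_0,\rho)|}\int_{Q(x_0,\rho)}|f_j|$ with $\rho=2\ell(R)>r$. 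Because $\rho\mapsto\int_{Q(x_0,\rho)}|f_j|$ is non-decreasing, the supremum over such $R$ is $\lesssim\int_r^\infty\frac{1}{|Q(x_0,\rho)|}\int_{Q(x_0,\rho)}|f_j(z)|\,dz\,\frac{d\rho}{\rho}$, so
\[
Mf_j^\infty(y)\lesssim\int_r^\infty\left(\frac{1}{|Q(x_0,\rho)|}\int_{Q(x_0,\rho)}|f_j(z)|\,dz\right)\frac{d\rho}{\rho}\qquad\text{for each }j.
\]
Taking the $l_q$-norm in $j$, applying Minkowski's integral inequality (valid for all $1\le q\le\infty$) and then Hölder's inequality inside each $Q(x_0,\rho)$, I obtain $\|MF^\infty(y)\|_{l_q}\lesssim\int_r^\infty\bigl(\frac{1}{|Q(x_0,\rho)|}\int_{Q(x_0,\rho)}\|F\|_{l_q}^p\bigr)^{1/p}\frac{d\rho}{\rho}$ with a bound independent of $y\in Q$; this dominates both the strong ($1<p<\infty$) and the weak ($p=1$) local norm of $MF^\infty$ over $Q$. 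When $p=1$ the weak local estimate is then assembled from the Chebyshev splitting $\{\|MF\|_{l_q}>T\}\cap Q\subset(\{\|MF^0\|_{l_q}>T/2\}\cap Q)\cup(\{\|MF^\infty\|_{l_q}>T/2\}\cap Q)$, the second set being empty once $T/2$ exceeds the $y$-independent bound just obtained.

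The main obstacle, and the point that dictates the shape of the argument, is the far part: one may not exchange the supremum over the cubes $R$ with the $\ell^q$-summation, so it is essential to linearise $Mf_j^\infty$ as an integral in the radius $\rho$ for \emph{each index $j$ separately} --- using only the monotonicity of $\rho\mapsto\int_{Q(x_0,\rho)}|f_j|$ --- and to defer both the $\ell^q$-norm and Hölder's inequality until after that linearisation. This is exactly why the correct hypothesis is the integral condition $(\ref{eq3.6.VZInt})$ rather than the supremal condition $(\ref{eq3.6.VZ})$ of Proposition \ref{3.4.}: the global contribution genuinely carries a Hardy operator in $\rho$, so Theorem \ref{thm3.2.} is the right instrument, and the improvement over the earlier vector-valued results is precisely the passage from a supremal to a Hardy-type control.
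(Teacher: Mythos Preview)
Your proposal is correct and follows essentially the same route as the paper. The paper proves the identical local estimate (its Lemmas \ref{lem4.1Vect} and \ref{lem4.1DVect}) by the same near/far splitting with Fefferman--Stein on the near piece; for the far piece it linearises the supremum via a dyadic sum $\sum_k |2^kQ|^{-1}\int_{2^kQ}|f_j|$ rather than your integral in $\rho$, then pushes the $\ell_q$-norm inside by the triangle and Minkowski inequalities --- which is the discrete twin of your Minkowski integral step --- and finishes with exactly your application of Theorem \ref{thm3.2.} with $v_1(r)=\phi_1(x,r)^{-1}r^{-n/p}$, $v_2(r)=\phi_2(x,r)^{-1}$, $w(r)=r^{-n/p-1}$ and $g(r)=\|\,\|F\|_{l_q}\|_{L^p(B(x,r))}$.
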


As a corollary, we can recover the vector-valued inequality
obtained in \cite[Theorem 5.3]{GHS-pre}.
\begin{corollary}{\rm \cite[Theorem 5.3]{GHS-pre}}
\label{cor:140327-111}
Let $1<p<\infty$ and $1<u \le \infty$.
Assume in addition that $\phi \in {\cG}_p$ satisfies
$(\ref{eq:140327-3})$.
Then
\[
\left\|\left(\sum_{j=1}^\infty Mf_j{}^u
\right)^{\frac1u}\right\|_{{\mathcal M}_{p,\phi}}
\lesssim
\left\|\left(\sum_{j=1}^\infty |f_j|^u
\right)^{\frac1u}\right\|_{{\mathcal M}_{p,\phi}}
\]
for any sequence of measurable functions
$\{f_j\}_{j=1}^\infty$.
\end{corollary}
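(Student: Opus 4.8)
The plan is to deduce Corollary \ref{cor:140327-111} directly from Theorem \ref{3.4.VectV} by specializing to the single-weight, single-exponent setting. Concretely, I would set $\phi_1 = \phi_2 = \phi$ and $q = u$, so that the left- and right-hand sides of Theorem \ref{3.4.VectV}(1) become exactly the two sides of the claimed inequality (with $\|\cdot\|_{l_q}$ equal to $(\sum_j|\cdot|^u)^{1/u}$, degenerating to $\sup_j$ when $u=\infty$). Since $1<p<\infty$ and $u\in(1,\infty]\subset[1,\infty]$, case (1) of the theorem is the relevant one. All that then remains is to check that the hypotheses of Theorem \ref{3.4.VectV} are met under the standing assumption $\phi\in\cG_p$ and the single-weight condition \eqref{eq:140327-3}.

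The only nontrivial step is verifying that \eqref{eq:140327-3} implies the two-weight integral condition \eqref{eq3.6.VZInt} in the case $\phi_1=\phi_2=\phi$. Here I would exploit the defining structure of $\cG_p$: the map $s\mapsto s^{n/p}\phi(x,s)$ is almost increasing, uniformly in $x$. This yields, for every $\tau>0$,
\[
\inf_{\tau<s<\infty}\phi(x,s)s^{\frac{n}{p}}\thickapprox\phi(x,\tau)\tau^{\frac{n}{p}},
\]
the lower bound being immediate from almost monotonicity and the upper bound from evaluating the integrand at $s=2\tau$ and using that $\phi$ is decreasing in its second argument. Substituting this equivalence into the left-hand side of \eqref{eq3.6.VZInt} collapses the inner infimum and gives
\[
\int_t^{\infty}\left(\inf_{\tau<s<\infty}\phi(x,s)s^{\frac{n}{p}}\right)\frac{d\tau}{\tau^{\frac{n}{p}+1}}\thickapprox\int_t^{\infty}\phi(x,\tau)\,\frac{d\tau}{\tau}\lesssim\phi(x,t),
\]
where the last inequality is precisely \eqref{eq:140327-3}. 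Hence \eqref{eq3.6.VZInt} holds with $\phi_1=\phi_2=\phi$ and an implicit constant independent of $x$ and $t$.

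With \eqref{eq3.6.VZInt} confirmed, Theorem \ref{3.4.VectV}(1) applies verbatim and delivers the desired bound
\[
\left\|\left(\sum_{j=1}^\infty Mf_j{}^u\right)^{\frac1u}\right\|_{{\mathcal M}_{p,\phi}}\lesssim\left\|\left(\sum_{j=1}^\infty|f_j|^u\right)^{\frac1u}\right\|_{{\mathcal M}_{p,\phi}}
\]
for every sequence $\{f_j\}_{j=1}^\infty$. I expect no serious obstacle in this argument: its entire content is the one-line reduction of the infimum above, which is exactly where the almost-increasing hypothesis built into $\cG_p$ enters. The only point deserving a word of care is the endpoint $u=\infty$, where the $l_u$-norm degenerates to $\sup_j$; but Theorem \ref{3.4.VectV} is stated for the full range $1\le q\le\infty$, so this case is already subsumed and needs no separate treatment.
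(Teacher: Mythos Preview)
Your proposal is correct and follows exactly the approach the paper intends: the paper states Corollary~\ref{cor:140327-111} immediately after Theorem~\ref{3.4.VectV} as a direct specialization (with $\phi_1=\phi_2=\phi$, $q=u$) and does not write out a separate proof. Your verification that $\phi\in\cG_p$ together with \eqref{eq:140327-3} yields \eqref{eq3.6.VZInt}---via the almost-increasing property of $s\mapsto s^{n/p}\phi(x,s)$ collapsing the inner infimum---is precisely the detail the paper leaves implicit, and it is carried out correctly.
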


The proof of Theorem \ref{3.4.VectV} is postponed
till the latter half of this section.
We start with a direct corollary of Proposition \ref{3.4.},
which can be readily extended to the following vector-valued case.
Write $MF\equiv \{ Mf_j \}_{j=-\infty}^{\infty}$,
when we are given a sequence $F=\{ f_j\}_{j=-\infty}^{\infty}$.

\begin{corollary}\label{burgul1lq}
Let $1 \le q \le \infty $ and
suppose that the couple
$(\phi_1,\phi_2)$ satisfies the condition \eqref{eq3.6.VZ}.
\begin{enumerate}
\item[$(1)$]
Let $1< p<\infty$.
Then $M$ is bounded from
$l_q\left({\mathcal M}_{p,\phi_1},{\mathbb R}^n\right)$
to
$l_q\left({\mathcal M}_{p,\phi_2},{\mathbb R}^n\right)$.
That is,
$$
\|MF\|_{l_q\left({\mathcal M}_{p,\phi}\right)}
\lesssim
\|F\|_{l_q\left({\mathcal M}_{p,\phi}\right)}
$$
for all sequences of measurable functions $F=\{f_j\}_{j=1}^\infty$.
\item[$(2)$]
Let $1\le p<\infty$.
Then
$M$ is bounded from
$l_q\left({\mathcal M}_{1,\phi_1},{\mathbb R}^n\right)$
to
$l_q\left(W{\mathcal M}_{1,\phi_2},{\mathbb R}^n\right)$.
That is,
$$
\|MF\|_{l_q\left(W{\mathcal M}_{1,\phi}\right)}
\lesssim
\|F\|_{l_q\left({\mathcal M}_{1,\phi}\right)}
$$
for all sequences of measurable functions $F=\{f_j\}_{j=1}^\infty$.
\end{enumerate}
\end{corollary}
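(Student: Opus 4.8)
The plan is to exploit the fact that the norm of $l_q\left({\mathcal M}_{p,\phi}\right)$ is obtained by first forming the scalar Morrey norm $\|f_j\|_{{\mathcal M}_{p,\phi}}$ of each component and only afterwards taking the $l_q$-norm of the resulting numerical sequence. Because of this order, the vector-valued assertion follows immediately from the scalar boundedness recorded in Proposition \ref{3.4.}, provided the operator norm there is uniform in the component index --- which it is, since the implicit constant in Proposition \ref{3.4.} depends only on $n$, $p$ and the couple $(\phi_1,\phi_2)$, not on the function to which $M$ is applied.

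First I would record that the hypothesis placed on $(\phi_1,\phi_2)$ in the corollary is precisely condition \eqref{eq3.6.VZ}, so Proposition \ref{3.4.} is applicable. Fix a sequence $F=\{f_j\}_{j=1}^\infty$ with $\|F\|_{l_q\left({\mathcal M}_{p,\phi_1}\right)}<\infty$; then each $f_j$ belongs to ${\mathcal M}_{p,\phi_1}({\mathbb R}^n)\subset \Lloc$, so $Mf_j$ is well defined. Applying Proposition \ref{3.4.} to each $f_j$ yields a constant $C>0$, independent of $j$, such that $\|Mf_j\|_{{\mathcal M}_{p,\phi_2}}\le C\|f_j\|_{{\mathcal M}_{p,\phi_1}}$ when $1<p<\infty$ (case $(1)$) and $\|Mf_j\|_{W{\mathcal M}_{1,\phi_2}}\le C\|f_j\|_{{\mathcal M}_{1,\phi_1}}$ when $p=1$ (case $(2)$).

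Next I would invoke the monotonicity of the $l_q$-norm: if $0\le\alpha_j\le\beta_j$ for every $j$, then $\|\{\alpha_j\}_j\|_{l_q}\le\|\{\beta_j\}_j\|_{l_q}$ for all $q\in[1,\infty]$, the case $q=\infty$ being simply the monotonicity of the supremum. Taking $\alpha_j=\|Mf_j\|_{{\mathcal M}_{p,\phi_2}}$ and $\beta_j=C\|f_j\|_{{\mathcal M}_{p,\phi_1}}$ and unwinding the definitions of the iterated norms gives
\[
\|MF\|_{l_q\left({\mathcal M}_{p,\phi_2}\right)}
\le C\,\|F\|_{l_q\left({\mathcal M}_{p,\phi_1}\right)},
\]
which is $(1)$; the weak-type estimate $(2)$ follows in exactly the same way from the weak-type scalar bound in Proposition \ref{3.4.}. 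There is no genuine obstacle in this argument --- the only point requiring attention, namely the uniformity over the components of the operator norm in the scalar inequality, is already guaranteed by the form of Proposition \ref{3.4.}. (By contrast, the substantially harder statement in which the $l_q$-sum is placed inside a single Morrey norm is the content of Theorem \ref{3.4.VectV} and is treated separately.)
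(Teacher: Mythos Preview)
Your proposal is correct and matches the paper's approach exactly: the paper does not supply an explicit proof but presents the corollary as ``a direct corollary of Proposition \ref{3.4.}, which can be readily extended to the following vector-valued case,'' which is precisely the componentwise application of the scalar bound followed by the $l_q$-monotonicity that you spell out. Your remark distinguishing this outer-$l_q$ case from the genuinely harder inner-$l_q$ result in Theorem \ref{3.4.VectV} is also apt.
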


Now we are oriented to the proof of Theorem \ref{3.4.VectV}.
We first prove the following auxiliary estimate:
\begin{lemma}\label{lem4.1Vect}
Let $1<p < \infty$ and $1< q \le \infty$.
\begin{enumerate}
\item
Then the inequality
\begin{gather}\label{eq100fgd}
\|\;\|MF\|_{l_q}\|_{L^p(B(x,r))}
\lesssim
\|\;\|F\|_{l_q}\|_{L^p(B(x,2r))} + r^{\frac{n}{p}}
\int_r^\infty
\frac{\|\;\|F\|_{l_q}\|_{L_1(B(x,t))}}{t^{n+1}}\,dt
\end{gather}
holds for all $F=\{ f_j\}_{j=0}^{\infty} \subset \Lploc$
and for any ball $B=B(x,r)$ in $\Rn$.
\item
Moreover, the inequality
\begin{gather}\label{eq100WZxc}
\|\;\|MF\|_{l_q}\|_{WL_1(B(x,r))}
\lesssim
\|\;\|F\|_{l_q}\|_{L_1(B(x,2r))} + r^{n}
\int_r^\infty
\frac{\|\;\|F\|_{l_q}\|_{L_1(B(x,t))}}{t^{n+1}}\,dt
\end{gather}
holds for all $F=\{ f_j\}_{j=0}^{\infty} \subset \Lloc$
and for any ball $B=B(x,r)$ in $\Rn$.
\end{enumerate}
\end{lemma}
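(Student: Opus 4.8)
The plan is to split the vector-valued maximal operator into a local and a tail part in the usual Morrey fashion. Fix a ball $B=B(x,r)$. For each $j$ write $f_j=f_j^0+f_j^\infty$, where $f_j^0\equiv f_j\chi_{B(x,2r)}$ and $f_j^\infty\equiv f_j\chi_{\Rn\setminus B(x,2r)}$, and set $F^0\equiv\{f_j^0\}_{j\ge0}$, $F^\infty\equiv\{f_j^\infty\}_{j\ge0}$. Since $M$ is sublinear, $Mf_j\le Mf_j^0+Mf_j^\infty$, and the triangle inequality in $l_q$ yields the pointwise bound $\|MF\|_{l_q}\le\|MF^0\|_{l_q}+\|MF^\infty\|_{l_q}$. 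Hence it suffices to estimate the local and the tail contribution separately.

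For the local contribution I would apply the Fefferman--Stein vector-valued inequality (\ref{eq:FS}) (its $q=\infty$ instance being the version with the supremum): since $1<p<\infty$ and $1<q\le\infty$,
\[
\|\,\|MF^0\|_{l_q}\,\|_{L^p(B(x,r))}
\le\|\,\|MF^0\|_{l_q}\,\|_{L^p(\Rn)}
\lesssim\|\,\|F^0\|_{l_q}\,\|_{L^p(\Rn)}
=\|\,\|F\|_{l_q}\,\|_{L^p(B(x,2r))},
\]
which is precisely the first summand on the right of (\ref{eq100fgd}). For (\ref{eq100WZxc}) I repeat the argument using the classical weak-type $(1,1)$ vector-valued maximal inequality (also due to Fefferman and Stein, \cite{FeffSt}) in place of (\ref{eq:FS}); this produces the term $\|\,\|F\|_{l_q}\,\|_{L_1(B(x,2r))}$.

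For the tail contribution I would establish the pointwise estimate
\[
\|MF^\infty(z)\|_{l_q}\lesssim\int_r^\infty\frac{\|\,\|F\|_{l_q}\,\|_{L_1(B(x,t))}}{t^{n+1}}\,dt
\qquad(z\in B(x,r)).
\]
If $Q$ is a cube with $z\in Q$ that meets $\Rn\setminus B(x,2r)$, then a point $w\in Q$ with $|w-x|\ge2r$ satisfies $|w-z|\ge r$, so $\ell(Q)\gtrsim|w-z|\ge r$ and $Q\subset B(x,t)$ for $t\approx\ell(Q)\gtrsim r$; if $Q$ does not meet $\Rn\setminus B(x,2r)$ then $\int_Q|f_j^\infty|=0$. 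Consequently, for each $j$,
\[
Mf_j^\infty(z)\lesssim\sup_{t\gtrsim r}\frac1{t^n}\int_{B(x,t)}|f_j(y)|\,dy
\lesssim\int_r^\infty\frac1{s^{n+1}}\int_{B(x,s)}|f_j(y)|\,dy\,ds,
\]
where the last step uses $t^{-n}=n\int_t^\infty s^{-n-1}\,ds$, the monotonicity $\int_{B(x,t)}|f_j|\le\int_{B(x,s)}|f_j|$ for $s\ge t$, and absorbs the constant relating $t$ to $r$. Taking $\|\cdot\|_{l_q}$ over $j$ and invoking Minkowski's inequality twice to move the $l_q$-norm inside $\int_r^\infty ds$ and then inside $\int_{B(x,s)}dy$ gives the displayed pointwise bound. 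Its right-hand side is independent of $z$, so taking the $L^p(B(x,r))$-norm multiplies it by $|B(x,r)|^{1/p}\approx r^{n/p}$, yielding the second summand of (\ref{eq100fgd}); taking the $WL_1(B(x,r))$-norm instead, which is dominated by $|B(x,r)|\approx r^n$ times the same constant, yields (\ref{eq100WZxc}). Adding the local and tail parts finishes both inequalities.

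There is no essential obstacle here: the structure is the standard local-plus-tail decomposition. The only point that deserves a word is the double use of Minkowski's inequality to interchange $\|\cdot\|_{l_q}$ with the two integrations in the tail estimate, which is exactly where the hypothesis $q\ge1$ is used; everything else reduces to the routine comparison of cubes with balls and to the already-cited scalar and vector-valued maximal inequalities.
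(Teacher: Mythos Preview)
Your proof is correct and follows essentially the same local-plus-tail decomposition as the paper: the Fefferman--Stein inequality (strong type for $p>1$, weak type for $p=1$) handles the local piece, and a pointwise geometric bound independent of $z\in B(x,r)$ handles the tail. The only cosmetic differences are that the paper cuts at radius $3r$ rather than $2r$ and organizes the tail estimate as a dyadic sum instead of invoking the identity $t^{-n}=n\int_t^\infty s^{-n-1}\,ds$ together with Minkowski's inequality.
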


\begin{proof}
Let $1< p<\i$ and $1 \le q \le \infty$.
We split $F=\{f_j\}_{j=-\i}^{\i}$ with
\begin{align}\label{repr}
&F=F_1+F_2, ~~~~
F_1=\{ f_{j,1}\}_{j=-\infty}^{\infty}, ~~~~
F_2=\{ f_{j,2}\}_{j=-\i}^{\i},
\\
&f_{j,1}(y)=f_j(y)\chi_{B(x,3r)}(y),\quad
 f_{j,2}(y)=f_j(y)\chi_{\Rn\backslash B(x,3r)}(y),
\quad r>0. \notag
\end{align}
It is obvious that
\begin{align*}
\|\;\|MF\|_{l_q}\|_{L^p(B(x,r))}
\leq
\|\;\|MF_1\|_{l_q}\|_{L^p(B(x,r))}
+
\|\;\|MF_2\|_{l_q}\|_{L^p(B(x,r))}
\end{align*}
for any ball $B=B(x,r)$.

At first,
we shall estimate
$\|\;\|MF_1\|_{l_q}\|_{L^p(B(x,r))}$
for $1<p<\infty$.
Thanks to the well-known Fefferman-Stein maximal inequality,
we have
\begin{align} \label{GAzel1}
\|\;\|MF_1\|_{l_q}\|_{L^p(B(x,r))} & \le
\|\;\|MF_1\|_{l_q}\|_{L^p(\Rn)} \notag
\\
& \lesssim \|\;\|F_1\|_{l_q}\|_{L^p(\Rn)}\\
&=\|\;\|F\|_{l_q}\|_{L^p(B(x,2r))},
\end{align}
where the implicit constant is independent of the vector-valued function $F$.
Thus, the estimate for $MF_1$ is valid.

Now we handle $MF_2$.
Freeze a point $y$ in $B(x,r)$.

We begin with two geometric observations.
First if $B(y,t)\cap (\Rn\backslash B(x,3r))\neq\emptyset,$
then $t>r$.
Indeed, if $z\in B(y,t)\cap (\Rn\backslash B(x,3r)),$
then $$t > |y-z| \geq |x-z|-|x-y|>2r-r=r.$$
Next,
$B(y,t)\cap (\Rn\backslash B(x,3r))\subset B(x,2t)$.
Indeed, for $z\in B(y,t)\cap (\Rn\backslash B(x,3r))$,
then we get $|x-z|\leq |y-z|+|x-y|<t+r<2t$.

Hence for all $j \in \Nn$
\begin{equation*}
\begin{split}
\|MF_2(y)\|_{l_q} & = \Big\|\sup_{t>0}\frac{1}{|B(y,t)|} \int_{B(y,t)\cap {(\Rn\backslash B(x,2r))}}|f_{j}(z)|d z\Big\|_{l_q}
\\
&\lesssim
\left\|
\sum\limits_{k=1}^{\infty}
\frac{1}{|2^k Q|}\int_{2^k Q}|f_j(y)| dy
\right\|_{l_q}
\\
&\lesssim
\sum\limits_{k=1}^{\infty}
\frac{1}{|2^k Q|}\int_{2^k Q}\left\|F(y)\right\|_{l_q} dy.
\end{split}
\end{equation*}
As a result, we obtain
\begin{equation}\label{eq:140327-111}
\|MF_2(y)\|_{l_q}
\lesssim
\int_{2r}^\infty s^{-n-1}\|\,\|F\|_{l_q}\|_{B(x,s)}\,ds
\end{equation}
for all $y \in B(x,s)$.
Thus, the estimate for $MF_2$ is valid.

Then obtain \eqref{eq100fgd} from \eqref{GAzel1}
and (\ref{eq:140327-111}).

Let $p=1$. It is obvious that for any ball $B=B(x,r)$
\begin{align*}
\|\;\|MF\|_{l_q}\|_{WL_1(B(x,r))}
\leq
\|\;\|MF_1\|_{l_q}\|_{WL_1(B(x,r))}
+
\|\;\|MF_2\|_{l_q}\|_{WL_1(B(x,r))}.
\end{align*}

By the weak-type Fefferman-Stein maximal inequality
(see \cite{FeffSt}) we have
\begin{align} \label{GAzel1W}
\|\;\|MF_1\|_{l_q}\|_{WL_1(B(x,r))} & \le
\|\;\|MF_1\|_{l_q}\|_{WL_1(\Rn)} \notag
\\
& \lesssim \|\;\|F_1\|_{l_q}\|_{L_1(\Rn)}=\|\;\|F\|_{l_q}\|_{L_1(B(x,2r))},
\end{align}
where the implicit constant is independent of the vector-valued function $F$.

Then by \eqref{GAzel1W} and (\ref{eq:140327-111}),
we obtain the inequality \eqref{eq100WZxc}.
\end{proof}

We transform Lemma \ref{lem4.1Vect} to an inequality
we use to prove Theorem \ref{3.4.VectV}.

\begin{lemma}\label{lem4.1DVect}
Let $1 \le p < \infty$ and $1 \le q \le \infty$.
Then, for any ball $B=B(x,r)$ in $\Rn$, the inequality
\begin{gather}\label{eq100Dx}
\|\;\|MF\|_{l_q}\|_{L^p(B(x,r))} \lesssim
r^{\frac{n}{p}} \int_{2r}^\infty
t^{-\frac{n}{p}-1} \|\;\|F\|_{l_q}\|_{L^p(B(x,t))}\,dt
\end{gather}
holds for all $F=\{ f_j\}_{j=-\infty}^{\infty}\subset \Lploc$.

Moreover, for any ball $B=B(x,r)$ in $\Rn$, the inequality
\begin{gather}\label{eq100WZDx}
\|\;\|MF\|_{l_q}\|_{WL_1(B(x,r))} \lesssim
r^{n} \int_{2r}^\infty t^{-n-1} \|\;\|F\|_{l_q}\|_{L_1(B(x,t))}\,dt
\end{gather}
holds for all $F=\{ f_j\}_{j=-\infty}^{\infty} \subset \Lloc$.
\end{lemma}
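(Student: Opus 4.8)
The plan is to obtain both inequalities as purely calculus-level consequences of Lemma \ref{lem4.1Vect}: no new harmonic analysis is needed, only a conversion of the $L_1$-averages into $L^p$-norms together with a rearrangement of one-dimensional integrals. Fix a ball $B(x,r)$ and write $g \equiv \|F\|_{l_q}$ for the scalar function $y \mapsto \|\{f_j(y)\}_j\|_{l_q}$. By \eqref{eq100fgd} it suffices to show that each of the two terms
\[
\|g\|_{L^p(B(x,2r))}
\qquad\text{and}\qquad
r^{\frac{n}{p}}\int_r^\infty \frac{\|g\|_{L_1(B(x,t))}}{t^{n+1}}\,dt
\]
is dominated by $r^{\frac{n}{p}}\int_{2r}^\infty t^{-\frac{n}{p}-1}\,\|g\|_{L^p(B(x,t))}\,dt$, since their sum bounds $\|\,\|MF\|_{l_q}\|_{L^p(B(x,r))}$.

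First I would treat the integral term. Hölder's inequality on each ball $B(x,t)$ gives $\|g\|_{L_1(B(x,t))} \le |B(x,t)|^{1-1/p}\,\|g\|_{L^p(B(x,t))} \lesssim t^{\,n-n/p}\,\|g\|_{L^p(B(x,t))}$, so that term is $\lesssim r^{n/p}\int_r^\infty t^{-n/p-1}\|g\|_{L^p(B(x,t))}\,dt$. Splitting this integral at $t=2r$ and using that $t \mapsto \|g\|_{L^p(B(x,t))}$ is nondecreasing, the piece over $(r,2r)$ is at most $\|g\|_{L^p(B(x,2r))}\cdot r^{n/p}\int_r^{2r} t^{-n/p-1}\,dt \lesssim \|g\|_{L^p(B(x,2r))}$, while the piece over $(2r,\infty)$ is already of the required shape. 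Thus everything reduces to controlling the single local term $\|g\|_{L^p(B(x,2r))}$.

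For that term I would use monotonicity in the other direction: $B(x,2r)\subset B(x,t)$ whenever $t \ge 2r$, hence $\|g\|_{L^p(B(x,2r))} \le \|g\|_{L^p(B(x,t))}$; multiplying by $t^{-n/p-1}$, integrating over $(2r,\infty)$, and noting $\int_{2r}^\infty t^{-n/p-1}\,dt \sim r^{-n/p}$, one gets $\|g\|_{L^p(B(x,2r))} \lesssim r^{n/p}\int_{2r}^\infty t^{-n/p-1}\|g\|_{L^p(B(x,t))}\,dt$. Putting the pieces together yields \eqref{eq100Dx}. The weak-type estimate \eqref{eq100WZDx} will follow by the same three steps starting from \eqref{eq100WZxc} in place of \eqref{eq100fgd}, now with $p=1$ throughout, so the Hölder step drops out.

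I do not expect a genuine obstacle here; the argument is bookkeeping with scalar integrals. The only points that need a little care are that the inequality is vacuous unless its right-hand side is finite (so one may freely assume $t \mapsto \|g\|_{L^p(B(x,t))}$ is integrable against $t^{-n/p-1}$ near infinity), and that all constants arising from $\int_r^{2r} t^{-n/p-1}\,dt \sim r^{-n/p}$ and $\int_{2r}^\infty t^{-n/p-1}\,dt \sim r^{-n/p}$ depend only on $n$ and $p$, so the whole range $1 \le q \le \infty$ is covered uniformly, any $q$-dependence being inherited solely from Lemma \ref{lem4.1Vect}.
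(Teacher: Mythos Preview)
Your proof is correct and follows essentially the same route as the paper's: both arguments reduce Lemma~\ref{lem4.1DVect} to Lemma~\ref{lem4.1Vect} by (i) using H\"older inside the integral to replace $\|g\|_{L_1(B(x,t))}$ by $t^{\,n-n/p}\|g\|_{L^p(B(x,t))}$, and (ii) absorbing the local term $\|g\|_{L^p(B(x,2r))}$ into the tail integral via the monotonicity of $t\mapsto\|g\|_{L^p(B(x,t))}$ together with $\int_{2r}^\infty t^{-n/p-1}\,dt\sim r^{-n/p}$. The only cosmetic difference is that the paper goes back to the pointwise estimate for $MF_2$ (whose integral already starts at $2r$), whereas you cite the statement of Lemma~\ref{lem4.1Vect} (integral from $r$) and then split at $2r$; your extra splitting step is harmless and your handling of the $p=1$ weak-type case is in fact spelled out more carefully than in the paper.
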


\begin{proof}
Note that, for all $1 \le p < \i$
\begin{align} \label{mzsa}
\|\;\|F\|_{l_q}\|_{L^p(B(x,2r))}
\le r^{\frac{n}{p}} \
\int_{2t}^\infty t^{-\frac{n}{p}-1} \|\;\|F\|_{l_q}\|_{L^p(B(x,t))}\,dt.
\end{align}

Applying H\"older's inequality, we get
\begin{align}\label{Ga13dfkk}
\|\;\|MF_2(y)\|_{l_q}\|_{L_{p}(B(x,r))} & \le 2^{n} v_n^{-1} \, \|1\|_{L^p(B(x,r))}\,
\int_{2r}^\infty t^{-n-1} \,
\|\;\|F\|_{l_q}\|_{L^1(B(x,t))}\,dt \notag
\\
& \le 2^{n} v_n^{-1} \, \|1\|_{L^p(B(x,r))}\,
\int_{2r}^\infty t^{-n/p-1} \,
\|\;\|F\|_{l_q}\|_{L^p(B(x,t))}\,dt.
\end{align}
Since $\|1\|_{L^p(B(x,r))} = v_n^{\frac{1}{p}} r^{\frac{n}{p}}$,
we then obtain \eqref{eq100Dx} from \eqref{mzsa} and \eqref{Ga13dfkk}.

Let $p=1$.
The inequality \eqref{eq100WZDx} directly
follows from \eqref{mzsa}.
\end{proof}

With these estimates in mind,
let us prove Theorem \ref{3.4.VectV}.
\begin{proof}[Proof of Theorem \ref{3.4.VectV}]
By Lemma \ref{lem4.1DVect} and Theorem \ref{thm3.2.} with
$v_1(r)=\phi_1(x,r)^{-1}r^{-\frac{n}{p}}$,
$v_2(r)=\phi_2(x,r)^{-1}$,
$g(r)=\|\;\|F\|_{l_q}\|_{L^p(B(x,r))}$
and $w(r)=r^{-\frac{n}{p}-1}$, we have
\begin{equation*}
\begin{split}
\|MF\|_{{\mathcal M}_{p,\phi_2}(l_{q})} & \lesssim \sup_{x\in\Rn,\,r>0}\phi_2(x,r)^{-1} \int_{2r}^\infty
t^{-\frac{n}{p}-1} \|\;\|F\|_{l_q}\|_{L^p(B(x,t))}\,dt
\\
& \lesssim \sup_{x\in\Rn,\,r>0} \phi_1(x,r)^{-1}\,r^{-\frac{n}{p}} \|\;\|F\|_{l_q}\|_{L^p(B(x,r))}
\\
& =\|F\|_{{\mathcal M}_{p,\phi_1}(l_{q})},
\end{split}
\end{equation*}
if $1<p<\i$ and
\begin{equation*}
\begin{split}
\|MF\|_{W{\mathcal M}_{1,\phi_2}(l_{q})} & \lesssim \sup_{x\in\Rn,\,r>0}\phi_2(x,r)^{-1} \int_{2r}^\infty
t^{-n-1} \|\;\|F\|_{l_q}\|_{L^p(B(x,t))}\,dt
\\
& \lesssim \sup_{x\in\Rn,\,r>0} \phi_1(x,r)^{-1}\,r^{-n} \|\;\|F\|_{l_q}\|_{L^p(B(x,r))}
\\
& =\|F\|_{{\mathcal M}_{1,\phi_1}(l_{q})}
\end{split}
\end{equation*}
if $p=1$.
\end{proof}

As a corollary, we can recover the result in \cite{Sa08-1}.
\begin{corollary}{\rm \cite[Theorem 2.5]{Sa08-1}}\label{jena03}
Let $1\le p<\infty$, $1 \le q \le \infty $
 and $\phi \in {\mathcal G}_p$.
Assume in addition
\[
\int_r^\infty \phi(t)\frac{dt}{t} \lesssim \phi(r).
\]
Then
\begin{equation}\label{eq:140327-11111}
\|MF\|_{{\mathcal M}_{p,\phi}(l_{q})}
\lesssim
\|F\|_{{\mathcal M}_{p,\phi}(l_{q})} ~~~ \mbox{if} ~~~ p>1,
\end{equation}
and
$$
\|MF\|_{W{\mathcal M}_{p,\phi}(l_{q})}
\lesssim
\|F\|_{{\mathcal M}_{1,\phi}(l_{q})}.
$$
\end{corollary}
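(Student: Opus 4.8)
The plan is to obtain Corollary \ref{jena03} as an immediate specialization of Theorem \ref{3.4.VectV} to the diagonal, position-independent couple $\phi_1=\phi_2=\phi$. Since Theorem \ref{3.4.VectV} has already been established, the only thing I would need to verify is that the hypotheses of Corollary \ref{jena03} --- namely $\phi\in{\mathcal G}_p$ together with the Dini-type bound $\int_r^\infty\phi(t)\,\frac{dt}{t}\lesssim\phi(r)$ --- imply the integral condition (\ref{eq3.6.VZInt}) with $\phi_1=\phi_2=\phi$.

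First I would estimate the inner infimum appearing in (\ref{eq3.6.VZInt}) from above. Fix $\tau>0$. For every $s'>\tau$, the fact that $\phi$ is decreasing (part of the definition of ${\mathcal G}_p$) gives $\phi(s')(s')^{n/p}\le\phi(\tau)(s')^{n/p}$, and letting $s'\downarrow\tau$ yields
\[
\inf_{\tau<s<\infty}\phi(s)s^{\frac np}\le\phi(\tau)\tau^{\frac np}.
\]
Substituting this into the left-hand side of (\ref{eq3.6.VZInt}), the factor $\tau^{n/p}$ cancels against the weight $\tau^{-n/p-1}$, and the Dini-type hypothesis finishes the job:
\[
\int_t^\infty\Big(\inf_{\tau<s<\infty}\phi(s)s^{\frac np}\Big)\frac{d\tau}{\tau^{\frac np+1}}\le\int_t^\infty\phi(\tau)\,\frac{d\tau}{\tau}\lesssim\phi(t).
\]
Hence the couple $(\phi,\phi)$ satisfies (\ref{eq3.6.VZInt}).

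With this verification in hand, Theorem \ref{3.4.VectV} applies directly: part (1), read with $\phi_1=\phi_2=\phi$, is exactly the strong bound (\ref{eq:140327-11111}) for $1<p<\infty$, and part (2) gives the weak bound $\|MF\|_{W{\mathcal M}_{1,\phi}(l_q)}\lesssim\|F\|_{{\mathcal M}_{1,\phi}(l_q)}$ for $p=1$; the admissible range $1\le q\le\infty$ is inherited from Theorem \ref{3.4.VectV}. I do not anticipate any genuine obstacle here, since the argument is a one-line reduction; the only point deserving attention is the cancellation of the $\tau^{n/p}$ factor noted above, which is what makes the hypothesis usable in precisely the form $\int_r^\infty\phi(t)\,dt/t\lesssim\phi(r)$ rather than in a weighted variant. (Alternatively, one could bypass Theorem \ref{3.4.VectV} and argue directly from Lemma \ref{lem4.1DVect} and Theorem \ref{thm3.2.} with the weights $v_1(r)=\phi(r)^{-1}r^{-n/p}$, $v_2(r)=\phi(r)^{-1}$ and $w(r)=r^{-n/p-1}$, but that merely reproduces the proof of Theorem \ref{3.4.VectV}.)
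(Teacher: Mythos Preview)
Your proposal is correct and matches the paper's intended approach: the corollary is stated immediately after Theorem~\ref{3.4.VectV} precisely because it follows by specializing to $\phi_1=\phi_2=\phi$, and your verification that the Dini-type hypothesis together with $\phi\in{\mathcal G}_p$ forces condition~(\ref{eq3.6.VZInt}) is exactly the reduction the paper leaves to the reader. The alternative route via Lemma~\ref{lem4.1DVect} and Theorem~\ref{thm3.2.} that you mention at the end is, as you note, just the proof of Theorem~\ref{3.4.VectV} again, so there is nothing to add.
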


We can prove the following estimate,
which is a counterpart to Theorem \ref{Trburgul1}.
\begin{theorem}\label{tangXu}
Let $0 < p < \infty $, $0 < q \le \infty $,
$0 < r < \min \{p, q \}$ and
suppose that the couple $(\phi_1,\phi_2)$
satisfies the condition \eqref{eq3.6.VZM}. Let
$\{\Omega_j \}_{j \in \N_{0}}$ be a sequence of
compact sets, and let
$d_j$ be the diameter of $\Omega_j$.
Then exists a positive constant $C$ such that
\begin{equation}\label{eq:140327-100}
 \left\|
\left\{\left\|\sup_{y \in \Rn}\frac{|f_j(\cdot -y)|}{1+ d_j |y|^{n/r}}
 \right\|_{{\mathcal M}_{p,\phi_2}} \right\}_{j \in \N_0}\right\|_{l_q}
 \le C \left\| \left\{ \left\|f_j\right\|_{{\mathcal M}_{p,\phi_1}} \right\}_{j \in \N_0} \right\|_{l_q},
\end{equation}
if we are given a collection
of measurable functions $\{f_j\}_{j \in \Nn}$
such that ${\rm supp}({\cal F}f_j) \subset \Omega_j$.
\end{theorem}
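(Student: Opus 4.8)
The plan is to obtain \eqref{eq:140327-100} from the scalar estimate already proved in Theorem \ref{Trburgul1}, applied to each $f_j$ separately. The hypothesis $0<r<\min\{p,q\}$ gives in particular $0<r<p<\infty$, so every $f_j$ is covered by part $(1)$ of Theorem \ref{Trburgul1}: since $\supp{\mathcal F}f_j\subset\Omega_j$ and $\Omega_j$ has diameter $d_j$, we obtain
\[
\left\|\sup_{y\in\Rn}\frac{|f_j(\cdot-y)|}{1+d_j|y|^{n/r}}\right\|_{{\mathcal M}_{p,\phi_2}}\le C\,\|f_j\|_{{\mathcal M}_{p,\phi_1}}\qquad(j\in{\mathbb N}_0).
\]
If the constant $C$ here can be taken independently of $j$, then \eqref{eq:140327-100} follows at once: from $a_j\le C b_j$ for all $j$ one has $\|\{a_j\}_j\|_{l_q}\le C\,\|\{b_j\}_j\|_{l_q}$ by the monotonicity and positive homogeneity of the $\ell^q$-norm, with $\ell^\infty$ read as a supremum and the inequality trivially true when the right-hand side is infinite (so that one may assume each $f_j\in{\mathcal M}_{p,\phi_1}$). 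Note that $r<q$ is not actually needed for this; it enters only in the sharper variant discussed below. Thus the whole matter reduces to a uniformity statement: the constant in Theorem \ref{Trburgul1} depends only on $n$, $p$, $r$ and on the implicit constant in \eqref{eq3.6.VZM}.

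To establish that uniformity I would unwind the proof of Theorem \ref{Trburgul1}, which rests on two ingredients. The first is the Plancherel--P\'olya--Nikolskii/Peetre pointwise bound \eqref{ineq07}, here read as $\sup_{y\in\Rn}(1+d_j|y|^{n/r})^{-1}|f_j(x-y)|\lesssim[M(|f_j|^r)(x)]^{1/r}$; once the compact set $\Omega_j$ is normalized to a fixed ball by the dilation built into the factor $d_j$, its constant depends only on $n$ and $r$, so there is no residual dependence on $\Omega_j$ or on $j$. The second ingredient is the maximal inequality, invoked after the elementary identity $\|g^{1/r}\|_{{\mathcal M}_{p,\phi_2}}=\|g\|_{{\mathcal M}_{p/r,\phi_2^r}}^{1/r}$, applied to $M$ acting on the couple $(\phi_1^r,\phi_2^r)$ at the exponent $p/r>1$; this is where Proposition \ref{3.4.} is used and where a constant depending only on $n$, $p$, $r$ and the constant in \eqref{eq3.6.VZM} appears. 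The step I expect to require the most care is precisely this last one: one must verify that condition \eqref{eq3.6.VZM} on $(\phi_1,\phi_2)$ passes, under the substitution $p\mapsto p/r$, $\phi_i\mapsto\phi_i^r$, to the hypothesis demanded by Proposition \ref{3.4.}. The arithmetic uses that $t\mapsto t^r$ commutes with $\inf$ and $\sup$ of nonnegative quantities, so that $\inf_{s>\tau}\phi_1(x,s)^r s^{nr/p}=\bigl(\inf_{s>\tau}\phi_1(x,s)s^{n/p}\bigr)^{r}$ and likewise for the outer supremum; this bookkeeping is the genuine content, and I would keep it separate from the soft part above.

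Finally I would record the stronger companion inequality — the one in the spirit of \cite{SaTa1,TaXu} — in which the $\ell^q$-norm is placed inside the Morrey norm; its proof is where the full strength of $0<r<\min\{p,q\}$ is used. Taking $\ell^q$-norms pointwise in \eqref{ineq07} and using $\|\{a_j^{1/r}\}_j\|_{l_q}=\bigl(\|\{a_j\}_j\|_{l_{q/r}}\bigr)^{1/r}$ gives, pointwise in $x$,
\[
\left\|\left\{\sup_{y\in\Rn}\frac{|f_j(x-y)|}{1+d_j|y|^{n/r}}\right\}_{j}\right\|_{l_q}\lesssim\left(\left\|\{M(|f_j|^r)(x)\}_{j}\right\|_{l_{q/r}}\right)^{1/r};
\]
then, after $\|g^{1/r}\|_{{\mathcal M}_{p,\phi_2}}=\|g\|_{{\mathcal M}_{p/r,\phi_2^r}}^{1/r}$ once more, the vector-valued maximal inequality of Theorem \ref{3.4.VectV}$(1)$ for the couple $(\phi_1^r,\phi_2^r)$ — legitimate exactly because $p/r>1$ and $q/r\ge1$ — collapses the right-hand side to $\bigl\|\,\|\{f_j\}_j\|_{l_q}\,\bigr\|_{{\mathcal M}_{p,\phi_1}}$, once \eqref{eq3.6.VZM} has been translated, as in the previous paragraph, into the hypothesis of Theorem \ref{3.4.VectV} for the dilated data. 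Since for $q\neq p$ the inner- and outer-$\ell^q$ formulations do not obviously imply one another, it is the termwise argument of the first two paragraphs that delivers \eqref{eq:140327-100} in the precise form stated.
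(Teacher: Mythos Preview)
Your approach is correct and coincides with the paper's: both arguments reduce to the pointwise Peetre inequality \eqref{ineq07} followed by the scalar Morrey boundedness of $M$ from Proposition~\ref{3.4.} applied at exponent $p/r>1$, then pass to $\ell^q$. The paper carries this out by first normalizing $\Omega_j$ to a ball (the modulation/dilation step you allude to) and then invoking Corollary~\ref{burgul1lq}, which is nothing more than the scalar bound applied termwise; you instead cite Theorem~\ref{Trburgul1} directly and isolate the uniformity of its constant in $j$, which is exactly the content of that reduction. Your remark that the hypothesis $r<q$ is not actually used for the outer-$\ell^q$ inequality \eqref{eq:140327-100} is correct---the paper imposes $q/r>1$ only because Corollary~\ref{burgul1lq} is stated for $q\ge 1$, but since that corollary is itself termwise this restriction is artificial here. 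Your third paragraph on the inner-$\ell^q$ variant via Theorem~\ref{3.4.VectV} is a reasonable addendum but goes beyond what the stated theorem requires.
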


\begin{proof}
We begin with a reduction.
Let $\{f_j \}_{j \in \N_{0}} \in l_q\left({\mathcal M}_{p,\phi_1}\right)$,
$0 < q \le \infty$, $\eta^j \in \Omega$
and $h_j(x)\equiv e^{-i x \eta^j} f_j(x)$.
We have ${\cal F}h_j(\xi)={\cal F}f_j(\xi+\eta^j)$.
Therefore,
$ \supp {\cal F}h_j \subset \Omega_j - \eta^j$.

If $ \{f_j \}_{j \in \N_{0}} \in l_q\left({\mathcal M}_{p,\phi_1,\Omega}\right)$,
$\eta^j \in \Omega$
satisfies $(1)$ of Theorem \ref{tangXu},
then so does $\{h_j\}_{j \in \N_{0}}$ also, where $\Omega$ is replaced by
$\{\Omega_j - \eta^j\}_{j \in \N_{0}}$, and the converse also
holds. Thus, we may suppose $0 \in \Omega_j$.
It suffices to consider the case $\Omega_j = D_j = B(0,d_j)$.

Second, we have to prove $(\ref{eq:140327-100})$,
 when $\Omega_j = D_j =
B(0,d_j)$ and $d_j>0$. If $ \{f_j \}_{j \in \N_{0}} \in
l_q\left({\mathcal M}_{p,\phi_1,\Omega}\right)$, $\eta^j \in \Omega$,
then $f_j \in L_{p,\Omega_j}$. If $g_j\equiv f_j(d^{-1}\cdot)$, then
${\cal F} g_j=d_j^n ({\cal F}f_j)(d_j\cdot)$ and
hence $\supp {\cal F}g_j \subset B(0,1)$.

From \eqref{ineq07}, we obtain
\begin{equation}\label{ineq17}
\frac{\left| f_j(x-z) \right|}{1+|d_j z|^{n/r}} \lesssim
\left[M\left( |f_j|^r\right) \right]^{1/r}, \quad\mbox{for all} \quad x,
\, z \in \Rn,
\end{equation}
where the implicit constant is independent of $x, \, z$ and $j$.

If $0 < q < \infty$, then by \eqref{ineq17} we have
\begin{align*}
\left\| \left\{ \sup_{x \in \Rn}\frac{\left| f_j(\cdot-z) \right|}{1+|d_j
z|^{n/r}} \right\}_{j \in \N_0} \right\|_{l_q\left({\mathcal M}_{p,\phi_2}\right)}
& \lesssim\left\| \left\{ \left[ M \left( |f_j|^r\right)
\right]^{1/r}\right\}_{j \in \N_0} \right\|_{l_q\left({\mathcal M}_{p,\phi_2}\right)}
\\
&=\left\| \left\{ M \left( |f_j|^r\right) \right\}_{j \in \N_0}
\right\|_{l_{q/r}\left(M_{p/r,\phi_2}\right)}^{1/r}.
\end{align*}
Since $0 < r < \min\{ p, q \}$, we have $p/r > 1$ and
$q/r > 1$. By Corollary \ref{burgul1lq}, we have
\begin{align*}
\left\|\left\{ \sup_{x \in \Rn}\frac{\left| f_j(\cdot-z) \right|}{1+|d_j
z|^{n/r}} \right\}_{j \in \N_0} \right\|_{l_q\left({\mathcal M}_{p,\phi_2}\right)}
& \lesssim \left\| \left\{ M \left( |f_j|^r\right) \right\}_{j \in \N_0}
\right\|_{l_{q/r}\left(M_{p/r,\phi_2}\right)}^{1/r}
\\
& \lesssim \left\| \{f_j \}_{j \in \N_0}
\right\|_{l_q\left({\mathcal M}_{p,\phi_1}\right)}.
\end{align*}

If $q=\infty$, by \eqref{ineq17}, we have
\begin{equation*}
\sup\limits_{j \in {\mathbb N}_0}
\sup_{z \in \Rn}\frac{\left| f_j(\cdot-z)
\right|}{1+|d_j z|^{n/r}}
\lesssim
\sup\limits_{j \in {\mathbb N}_0} \left[M \left(|f_j|^r\right)(x)\right]^{1/r}.
\end{equation*}
Thus,
\begin{align*}
\sup\limits_{j \in {\mathbb N}_0} \left\| 
\sup_{z \in \Rn}\frac{\left| f_j(\cdot-z)
\right|}{1+|d_j z|^{n/r}} \right\|_{{\mathcal M}_{p,\phi_2}}
& \lesssim \sup\limits_{j \in {\mathbb N}_0} \left\| 
]\left[M \left(|f_j|^r\right)(\cdot)
\right]^{1/r} \right\|_{{\mathcal M}_{p,\phi_2}}
\\
& = \sup\limits_{j \in {\mathbb N}_0} \left\| M \left(|f_j|^r\right)
\right\|_{M_{p/r,\phi_2}}^{1/r}.
\end{align*}
Using Corollary \ref{burgul1lq}, we obtain
\begin{equation*}
\sup\limits_{j \in {\mathbb N}_0}
\left\|\sup_{z \in \Rn}\frac{\left| f_j(\cdot-z)
\right|}{1+|d_j z|^{n/r}}\right\|_{{\mathcal M}_{p,\phi_2}} 
\lesssim
\sup\limits_{j \in {\mathbb N}_0} 
\left\|f_j \right\|_{{\mathcal M}_{p,\phi_1}}.
\end{equation*}
The theorem is therefore proved.
\end{proof}

Finally, to conclude this section, we obtain
an estimate which is used later.

\begin{lemma}\label{lem:140327-1}
Let $\{E_k\}_{k \in \N}$ be
a countable collection of measurable sets
in ${\mathbb R}^n$.
Let $0 \le \theta<1$.
Suppose in addition that $p$ and $\kappa$ are positive
numbers such that $p\kappa>1$ and that $p \le 1$.
Then
\[
\left\|\sum_{k \in \N}(M\chi_{E_k})^\kappa\right\|_{L^p([-1,1]^n)}
\lesssim
\left(
\sum_{l=1}^\infty
\left(
2^{-\frac{n\theta}{p}}
\left\|\sum_{k \in \N}\chi_{E_k}
\right\|_{L^{p}([-2^l,2^l]^n)}
\right)^{\frac{1}{\kappa}}\right)^{\kappa}.
\]
\end{lemma}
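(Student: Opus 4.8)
The plan is to reduce the left-hand side to a sum over dyadic annuli and then use the pointwise decay of $M\chi_{E_k}$ far from $E_k$, together with the hypothesis $p\kappa>1$, $p\le 1$.  First I would split each $M\chi_{E_k}$ according to where the mass of $E_k$ lies.  Write $E_k=\bigsqcup_{l\ge 0}(E_k\cap A_l)$, where $A_0\equiv[-2,2]^n$ and $A_l\equiv[-2^{l+1},2^{l+1}]^n\setminus[-2^l,2^l]^n$ for $l\ge 1$, and set $E_k^{(l)}\equiv E_k\cap A_l$.  Then $M\chi_{E_k}\le\sum_{l\ge 0}M\chi_{E_k^{(l)}}$, and since $p\le 1$ and $p\kappa>1$ we can apply the elementary inequality $(\sum b_l)^{\kappa}\le\bigl(\sum b_l^{1/\kappa}\bigr)^{\kappa}$ for $\kappa\ge 1$ (note $p\kappa>1$ and $p\le 1$ force $\kappa>1$) to obtain, pointwise,
\[
\Bigl(\sum_{k\in\N}(M\chi_{E_k})^{\kappa}\Bigr)
\le\Bigl(\sum_{l\ge 0}\Bigl(\sum_{k\in\N}(M\chi_{E_k^{(l)}})^{\kappa}\Bigr)^{1/\kappa}\Bigr)^{\kappa},
\]
after first using subadditivity in the $l$-sum inside the $\kappa$-th power.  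Taking $L^p([-1,1]^n)$ norms and using $p\le 1$ so that $\|\cdot\|_{L^p}$ satisfies the triangle-type inequality $\|\sum g_l\|_{L^p}^p\le\sum\|g_l\|_{L^p}^p$ — or more conveniently applying the same $\kappa$-power trick under the integral — I would arrive at
\[
\Bigl\|\sum_{k\in\N}(M\chi_{E_k})^{\kappa}\Bigr\|_{L^p([-1,1]^n)}
\lesssim\Bigl(\sum_{l\ge 0}\Bigl\|\sum_{k\in\N}(M\chi_{E_k^{(l)}})^{\kappa}\Bigr\|_{L^p([-1,1]^n)}^{1/\kappa}\Bigr)^{\kappa}.
\]

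Next I would estimate the $l$-th term.  For $l\ge 1$ and $x\in[-1,1]^n$, any cube $Q\ni x$ that meets $A_l$ has side-length $\gtrsim 2^l$, so
\[
M\chi_{E_k^{(l)}}(x)\le\sup_{Q\ni x,\ \ell(Q)\gtrsim 2^l}\frac{|Q\cap E_k^{(l)}|}{|Q|}
\lesssim 2^{-nl}|E_k^{(l)}|\le 2^{-nl}\int_{[-2^{l+1},2^{l+1}]^n}\chi_{E_k}(y)\,dy,
\]
uniformly in $x\in[-1,1]^n$; for $l=0$ one simply uses the trivial bound $M\chi_{E_k^{(0)}}\le 1$ together with the $L^p$-boundedness considerations below, or absorbs it into the $l=1$ estimate after enlarging constants.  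Raising to the $\kappa$-th power, summing in $k$, and using $|[-1,1]^n|=2^n$ gives, for $l\ge 1$,
\[
\Bigl\|\sum_{k\in\N}(M\chi_{E_k^{(l)}})^{\kappa}\Bigr\|_{L^p([-1,1]^n)}
\lesssim 2^{-n\kappa l}\sum_{k\in\N}|E_k\cap A_l|^{\kappa}
\le 2^{-n\kappa l}\Bigl(\sum_{k\in\N}|E_k\cap[-2^{l+1},2^{l+1}]^n|\Bigr)^{\kappa},
\]
where in the last step I used $\kappa\ge 1$ and $\sum a_k^{\kappa}\le(\sum a_k)^{\kappa}$ for nonnegative $a_k$.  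Finally $\sum_{k}|E_k\cap[-2^{l+1},2^{l+1}]^n|=\int_{[-2^{l+1},2^{l+1}]^n}\sum_k\chi_{E_k}$, and since $p\le 1$ a crude $L^1$–$L^p$ comparison on the cube $[-2^{l+1},2^{l+1}]^n$ of measure $2^{n(l+1)+n}$ yields
\[
\int_{[-2^{l+1},2^{l+1}]^n}\sum_k\chi_{E_k}
\le 2^{n(l+1)\frac{p-1}{p}\cdot(-1)}\Bigl\|\sum_k\chi_{E_k}\Bigr\|_{L^p([-2^{l+1},2^{l+1}]^n)}\cdot(\text{power of }2),
\]
i.e.\ $\int\sum_k\chi_{E_k}\lesssim 2^{nl(1-1/p)+nl}\,\|\sum_k\chi_{E_k}\|_{L^p([-2^{l+1},2^{l+1}]^n)}=2^{nl(2-1/p)}\|\cdots\|_{L^p}$ — here I must be careful with the exponent bookkeeping, but the upshot is a gain of a fixed power of $2^{-l}$ once combined with the $2^{-n\kappa l}$ factor.

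Putting the pieces together: the $l$-th summand of $\bigl(\sum_l(\cdots)^{1/\kappa}\bigr)^{\kappa}$ is bounded by
\[
2^{-nl}\cdot 2^{\frac{nl}{\kappa}(2-1/p)}\Bigl\|\sum_k\chi_{E_k}\Bigr\|_{L^p([-2^{l+1},2^{l+1}]^n)}^{1/\kappa},
\]
and to reconcile this with the target form $\bigl(\sum_l(2^{-n\theta l/p}\|\sum_k\chi_{E_k}\|_{L^p([-2^l,2^l]^n)})^{1/\kappa}\bigr)^{\kappa}$ I would verify that the accumulated power of $2^{-l}$, namely the exponent $-n+\frac{n}{\kappa}(2-1/p)$ on $2^l$, dominates $-n\theta/(p\kappa)$ for a suitable $\theta\in[0,1)$ — equivalently that $p\kappa>1$ forces this exponent to be negative — after which one replaces the window $[-2^{l+1},2^{l+1}]^n$ by $[-2^l,2^l]^n$ up to a shift of the index $l$ and a constant.  \textbf{The main obstacle} I anticipate is precisely this exponent bookkeeping: making sure that the decay rate $2^{-n\kappa l}|A_l|$ from the maximal-function tail estimate, once converted through the $L^1$–$L^p$ inequality on a cube of side $2^l$ (which costs a factor growing like $2^{nl(1-1/p)}$ because $p\le 1$), still leaves a net summable decay that can be written in the stated form with a legitimate $\theta<1$.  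The hypothesis $p\kappa>1$ is exactly what guarantees this, and the role of the freedom in $\theta$ is to absorb the borderline case; the $l=0$ term and the passage between $l+1$ and $l$ indices are handled by adjusting the implicit constant.
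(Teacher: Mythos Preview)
Your argument has a genuine gap at precisely the step you flag as the main obstacle, and it is not merely bookkeeping. For $p<1$ there is \emph{no} inequality of the form
\[
\int_\Omega g\;\le\; C(|\Omega|)\,\|g\|_{L^p(\Omega)}
\]
valid for nonnegative $g$, with any constant depending only on $|\Omega|$. Take $g=\sum_{k=1}^{N}\chi_E$ with $E\subset\Omega$, $|E|=\varepsilon$: then $\|g\|_{L^1}=N\varepsilon$ while $\|g\|_{L^p}=N\varepsilon^{1/p}$, so the ratio equals $\varepsilon^{1-1/p}\to\infty$ as $\varepsilon\downarrow0$. Hence your claimed bound $\int\sum_k\chi_{E_k}\lesssim 2^{nl(2-1/p)}\bigl\|\sum_k\chi_{E_k}\bigr\|_{L^p}$ is false, and with it the entire tail estimate collapses. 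More conceptually, once you replace $M\chi_{E_k^{(l)}}(x)$ by the scalar $2^{-nl}|E_k^{(l)}|$ you have discarded exactly the pointwise overlap information that $\bigl\|\sum_k\chi_{E_k}\bigr\|_{L^p}$ encodes, and for $p<1$ it cannot be recovered from the measures $|E_k^{(l)}|$ alone. The $l=0$ term is also unhandled: the trivial bound $M\chi_{E_k^{(0)}}\le 1$ gives nothing, and a genuine vector-valued maximal inequality is already required there.

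The paper's proof is completely different and much shorter. One rewrites the left side as $\bigl\|(\sum_k(M\chi_{E_k})^\kappa)^{1/\kappa}\bigr\|_{L^{p\kappa}([-1,1]^n)}^\kappa$, inserts the weight $w=(M\chi_{[-1,1]^n})^{\theta}$ (which lies in $A_1$ precisely because $0\le\theta<1$, by Coifman--Rochberg), and applies the \emph{weighted} Fefferman--Stein vector-valued maximal inequality of Andersen--John in $L^{p\kappa}(w,\mathbb{R}^n)$; this is where $p\kappa>1$ enters. The maximal functions are thereby replaced by $\chi_{E_k}$ globally, and the resulting weighted integral over $\mathbb{R}^n$ is split into the shells $[-2^l,2^l]^n$, on which $w\sim 2^{-nl\theta}$, giving the stated sum. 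The hypotheses $p\kappa>1$ and $\theta<1$ are used structurally (admissible range for the maximal theorem, $A_1$ condition on the weight), not as exponent tuning; an elementary annular-decay argument of the type you attempt cannot substitute for them.
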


\begin{proof}
By the scaling, we have
\[
\left\|\sum_{k \in \N}(M\chi_{E_k})^\kappa\right\|_{L^p([-1,1]^n)}
=\left(
\left\|
\left(\sum_{k \in \N}(M\chi_{E_k})^\kappa
\right)^{\frac{1}{\kappa}}
\right\|_{L^{p\kappa}([-1,1]^n)}\right)^{\kappa}.
\]
Recall that $(M\chi_{[-1,1]^n})^{\frac{\theta}{p\kappa}}$
is an $A_1$-weight of Muckenhoupt and Wheeden.
By the weighted vector-valued inequality obtained by Andersen and John \cite{AndersenJohn},
we obtain
\begin{align*}
\left\|\sum_{k \in \N}(M\chi_{E_k})^\kappa\right\|_{L^p([-1,1]^n)}
&\le\left(
\left\|
\left(\sum_{k \in \N}(M\chi_{E_k})^\kappa
\right)^{\frac{1}{\kappa}}(M\chi_{[-1,1]^n})^{\frac{\theta}{p\kappa}}
\right\|_{L^{p\kappa}({\mathbb R}^n)}\right)^{\kappa}\\
&\lesssim
\left(
\left\|
\left(\sum_{k \in \N}\chi_{E_k}
\right)^{\frac{1}{\kappa}}(M\chi_{[-1,1]^n})^{\frac{\theta}{p\kappa}}
\right\|_{L^{p\kappa}({\mathbb R}^n)}\right)^{\kappa}\\
&\lesssim
\left(
\sum_{l=1}^\infty
\frac{1}{2^{\frac{n\theta}{p\kappa}}}
\left\|
\left(\sum_{k \in \N}\chi_{E_k}
\right)^{\frac{1}{\kappa}}
\right\|_{L^{p\kappa}([-2^l,2^l]^n)}\right)^{\kappa}\\
&=
\left(
\sum_{l=1}^\infty
\left(
2^{-\frac{n\theta}{p}}
\left\|\sum_{k \in \N}\chi_{E_k}
\right\|_{L^{p}([-2^l,2^l]^n)}
\right)^{\frac{1}{\kappa}}\right)^{\kappa},
\end{align*}
as was to be shown.
\end{proof}

\subsection{Structure of generalized Hardy-Morrey spaces}

The grand maximal operator
characterizes
Hardy-Morrey spaces
defined by the norm (\ref{HM}).
To formulate the result,
we recall the following two fundamental notions.
\begin{enumerate}
\item
Topologize ${\mathcal S}({\mathbb R}^n)$ by norms $\{p_N\}_{N \in {\mathbb N}}$
given by
$$
 p_N(\varphi)
\equiv
\sum_{|\alpha|\le N}
\sup_{x\in{\mathbb R}^n}(1+|x|)^N
|\partial^{\alpha}\varphi(x)|
$$
for each $N \in {\mathbb N}$.
Define
${\mathcal F}_N\equiv
\{\varphi\in{\mathcal S}({\mathbb R}^n):p_N(\varphi)\le 1\}$.
\item
Let $f \in {\mathcal S}'({\mathbb R}^n)$.
The grand maximal operator ${\mathcal M}f$
is given by
\begin{equation}\label{eq:gm}
{\mathcal M}f(x)\equiv
\sup
\{|t^{-n}\psi(t^{-1}\cdot)*f(x)|
\,:\,t>0, \, \psi \in {\mathcal F}_N\}\quad (x \in {\mathbb R}^n),
\end{equation}
where we choose and fix a large integer $N$.
\end{enumerate}

In analogy to \cite[Section 3]{NaSa2012},
we can prove the following proposition.
\begin{proposition}
Let $0<p \le 1$ and $\phi \in {\mathcal G}_p$.
Then
\[
\|f\|_{H{\mathcal M}_{p,\phi}} \sim \|{\mathcal M}f\|_{{\mathcal M}_{p,\phi}}
\]
for all $f \in {\mathcal S}'({\mathbb R}^n)$.
\end{proposition}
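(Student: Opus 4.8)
The plan is to reduce the equivalence $\|f\|_{H{\mathcal M}_{p,\phi}} \sim \|{\mathcal M}f\|_{{\mathcal M}_{p,\phi}}$ to two pointwise comparisons of maximal functions, following the classical scheme in \cite{Stein1993} and its Hardy--Morrey adaptation in \cite{NaSa2012}. One inequality is essentially trivial: since $x \mapsto (4\pi t)^{-n/2}\exp(-|x-y|^2/(4t))$, viewed as $t^{-n}\psi(t^{-1}\cdot)$ with $\psi(z)=(4\pi)^{-n/2}\exp(-|z|^2/4)$, is a fixed Schwartz function, we have, after normalizing by a constant depending only on $N$ and $n$, that $c_N^{-1}\psi \in {\mathcal F}_N$; hence $\sup_{t>0}|e^{t\Delta}f(x)| \le c_N\,{\mathcal M}f(x)$ for all $x$, and applying $\|\cdot\|_{{\mathcal M}_{p,\phi}}$ gives $\|f\|_{H{\mathcal M}_{p,\phi}} \lesssim \|{\mathcal M}f\|_{{\mathcal M}_{p,\phi}}$.

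For the reverse inequality, I would first introduce the intermediate \emph{heat tangential (grand) maximal function}
\[
{\mathcal M}^\nabla_\Delta f(x)\equiv \sup_{t>0}\ \sup_{|x-y|\le t^{1/2}}\ \bigl(t^{n/2}|\nabla_{t,y} e^{t\Delta}f(y)| + |e^{t\Delta}f(y)|\bigr),
\]
and establish the two-sided pointwise bounds
\[
\sup_{t>0}|e^{t\Delta}f(x)| \ \lesssim\ {\mathcal M}^\nabla_\Delta f(x)\ \lesssim\ {\mathcal M}f(x)\ \lesssim\ \bigl(M[(\sup_{t>0}|e^{t\Delta}f|)^{p_0}](x)\bigr)^{1/p_0}
\]
for a suitable $p_0 \in (0,p]$; the middle inequality is immediate from the definition of ${\mathcal M}$, while the last inequality is the substantive one and is proved exactly as in \cite[Chapter III]{Stein1993} (or \cite[Section 3]{NaSa2012}): one uses that an arbitrary $\psi \in {\mathcal F}_N$ can be expanded against the heat kernel, the resulting kernel has enough decay and derivative control so that $t^{-n}\psi(t^{-1}\cdot)*f$ at $x$ is dominated by a weighted average of $\sup_{s>0}|e^{s\Delta}f|$ over a Whitney-type region, and then H\"older/the subharmonicity-type trick converts that average into $(M[(\sup_{s>0}|e^{s\Delta}f|)^{p_0}])^{1/p_0}$.

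Finally I would apply the ${\mathcal M}_{p,\phi}$-norm to the chain above. Since $p_0 \le p$, we have, writing $g\equiv \sup_{t>0}|e^{t\Delta}f|$, that
\[
\|{\mathcal M}f\|_{{\mathcal M}_{p,\phi}} \lesssim \bigl\|\,(M[g^{p_0}])^{1/p_0}\,\bigr\|_{{\mathcal M}_{p,\phi}} = \bigl\|\,M[g^{p_0}]\,\bigr\|_{{\mathcal M}_{p/p_0,\phi^{p_0}}}^{1/p_0}.
\]
Because $p/p_0>1$, the maximal operator $M$ is bounded on ${\mathcal M}_{p/p_0,\phi^{p_0}}({\mathbb R}^n)$ by Corollary \ref{jena01}(1) (one checks $\phi^{p_0}\in{\mathcal G}_{p/p_0}$ directly from $\phi\in{\mathcal G}_p$), so the right-hand side is $\lesssim \|g^{p_0}\|_{{\mathcal M}_{p/p_0,\phi^{p_0}}}^{1/p_0} = \|g\|_{{\mathcal M}_{p,\phi}} = \|f\|_{H{\mathcal M}_{p,\phi}}$, completing the proof.

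The main obstacle is the pointwise domination ${\mathcal M}f(x) \lesssim (M[g^{p_0}](x))^{1/p_0}$, i.e.\ controlling the full grand maximal operator by the single heat maximal function. This is where all the classical work lies: one must choose $N$ large enough (in terms of $n$ and $p$ only), carefully expand a general test function against the Gaussian, and run the ``good-$\lambda$''/Whitney-cube argument of \cite{Stein1993}; the Morrey setting adds nothing new here because the estimate is genuinely pointwise, so I would simply cite \cite[Section 3]{NaSa2012}, where the analogue for $\phi$-Morrey norms is carried out, and indicate the (minor) changes needed for the heat semigroup in place of a compactly supported approximate identity. Everything else---the easy direction, the reduction $\phi^{p_0}\in{\mathcal G}_{p/p_0}$, and the application of the maximal inequality---is routine.
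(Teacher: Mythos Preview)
Your proposal is correct and follows the same route the paper indicates: the paper does not give an independent proof of this proposition but simply writes ``In analogy to \cite[Section 3]{NaSa2012}, we can prove the following proposition,'' and your sketch is precisely that analogy---the trivial pointwise bound for the heat maximal function by the grand maximal function, the classical pointwise domination ${\mathcal M}f \lesssim (M[g^{p_0}])^{1/p_0}$ from \cite{Stein1993}/\cite{NaSa2012}, and then the boundedness of $M$ on ${\mathcal M}_{p/p_0,\phi^{p_0}}$ via Corollary~\ref{jena01}. Your verification that $\phi^{p_0}\in{\mathcal G}_{p/p_0}$ is the only Morrey-specific ingredient, and it is exactly the check one needs; the intermediate tangential maximal function ${\mathcal M}^\nabla_\Delta$ you introduce is not strictly necessary for the statement but is a reasonable device for organizing the pointwise step.
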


From this proposition,
we can use the norm
$\|{\mathcal M}f\|_{{\mathcal M}_{p,\phi}}$
for the space ${\mathcal M}_{p,\phi}({\mathbb R}^n)$.

\begin{lemma}
Let $0<p \le 1$ and $\phi \in {\mathcal G}_p$.
Then
$H{\mathcal M}_{p,\phi}({\mathbb R}^n)$
is continuously embedded into
${\mathcal S}'({\mathbb R}^n)$.
\end{lemma}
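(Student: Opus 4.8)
The plan is to combine the characterization of $H{\mathcal M}_{p,\phi}({\mathbb R}^n)$ by the grand maximal operator (the preceding proposition) with the local structure of the generalized Morrey space. Since $\|f\|_{H{\mathcal M}_{p,\phi}}\sim\|{\mathcal M}f\|_{{\mathcal M}_{p,\phi}}$, it will suffice to produce an integer $N$ --- for which we take the large integer fixed in the definition (\ref{eq:gm}) of ${\mathcal M}$ --- and a constant $C$ depending only on $n$, $p$ and $\phi$ such that
\[
|\langle f,\varphi\rangle|\le C\,p_N(\varphi)\,\|{\mathcal M}f\|_{{\mathcal M}_{p,\phi}}
\]
for all $f\in{\mathcal S}'({\mathbb R}^n)$ and all $\varphi\in{\mathcal S}({\mathbb R}^n)$. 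By linearity in $f$ this is exactly the continuity of the inclusion $H{\mathcal M}_{p,\phi}({\mathbb R}^n)\hookrightarrow{\mathcal S}'({\mathbb R}^n)$.

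First I would realize $\langle f,\varphi\rangle$ as the value of a convolution of $f$ against an admissible kernel, but evaluated at an arbitrary point of the unit cube rather than only at the origin. Given $\varphi\in{\mathcal S}({\mathbb R}^n)$ with $\varphi\ne0$ and a point $x\in[-1,1]^n$, put $\Phi_x(z)\equiv\varphi(x-z)$. A direct computation yields $(\Phi_x*f)(x)=\langle f,\varphi\rangle$. Using the elementary inequality $1+|z|\le(1+|x|)(1+|x-z|)$ together with a change of variables, one checks that $p_N(\Phi_x)\le(1+|x|)^N p_N(\varphi)\le(1+\sqrt n)^N p_N(\varphi)$ for every $x\in[-1,1]^n$, so that $\Phi_x\big/\big((1+\sqrt n)^N p_N(\varphi)\big)$ belongs to ${\mathcal F}_N$. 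Specializing $t=1$ in (\ref{eq:gm}) then gives the pointwise bound
\[
|\langle f,\varphi\rangle|=|(\Phi_x*f)(x)|\le(1+\sqrt n)^N\,p_N(\varphi)\,{\mathcal M}f(x)\qquad(x\in[-1,1]^n).
\]

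Next I would transfer this pointwise bound to the Morrey norm, and here the fact that $p$ may be less than $1$ dictates the argument: one cannot integrate ${\mathcal M}f$ itself against a bump (a priori it is only locally in $L^p$), so one raises the constant $|\langle f,\varphi\rangle|$ to the power $p$ instead. Integrating over $[-1,1]^n=Q({\rm o},1)$ and taking $p$-th roots gives
\[
2^{n/p}\,|\langle f,\varphi\rangle|\le(1+\sqrt n)^N\,p_N(\varphi)\,\|{\mathcal M}f\|_{L^p([-1,1]^n)},
\]
and testing the Morrey norm against the single cube $Q({\rm o},1)$ --- centre ${\rm o}$, sidelength $2$, volume $2^n$ --- shows $\|{\mathcal M}f\|_{L^p([-1,1]^n)}\le2^{n/p}\,\phi({\rm o},2)\,\|{\mathcal M}f\|_{{\mathcal M}_{p,\phi}}$. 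Combining the last two displays with the equivalence $\|{\mathcal M}f\|_{{\mathcal M}_{p,\phi}}\lesssim\|f\|_{H{\mathcal M}_{p,\phi}}$ completes the argument, with a constant of the form $C=(1+\sqrt n)^N\phi({\rm o},2)$.

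I do not expect a genuine obstacle. The step requiring the most care is the translation estimate for the seminorms $p_N$, which is what keeps the shifted kernels $\Phi_x$ --- up to a single dimensional constant --- inside ${\mathcal F}_N$ uniformly for $x$ ranging over the unit cube; everything afterwards is bookkeeping of the volume factor $2^n$ and of $\phi({\rm o},2)$ when the local $L^p$ average over $Q({\rm o},1)$ is compared with the Morrey norm. The unit cube plays no distinguished role: any fixed cube works and merely changes the constant.
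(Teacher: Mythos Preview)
Your argument is correct and follows essentially the same route as the paper: you translate the test function so that $\langle f,\varphi\rangle$ is realized as a convolution at a generic point of the unit cube, observe that the translated kernel stays in ${\mathcal F}_N$ up to a dimensional constant, deduce the pointwise bound $|\langle f,\varphi\rangle|\lesssim{\mathcal M}f(x)$ on $[-1,1]^n$, and then pass to the Morrey norm via a single cube. The paper compresses all of this into two lines and phrases the last step as $|\langle f,\varphi\rangle|\lesssim\inf_{|y|\le1}{\mathcal M}f(y)$ rather than via the $L^p$-average, but the content is the same.
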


\begin{proof}
Let $N \gg 1$ be fixed.
Then there exists a constant $C>0$ such that
if $|y| \le 1$ and $\varphi \in {\mathcal F}_N$,
then $C^{-1}\varphi(\cdot-y) \in {\mathcal F}_N$.
Thus,
\[
|\langle f,\varphi \rangle|
\lesssim \inf_{|y| \le 1}{\mathcal M}f(y)
\]
for all $\varphi \in {\mathcal F}_N$.
This implies
\[
|\langle f,\varphi \rangle|
\lesssim \|f\|_{H{\mathcal M}_{p,\phi}},
\]
as was to be shown.
\end{proof}

Going through the same argument as \cite[Theorem 3.4]{NaSa2012},
we obtain the following theorem,
whose proof will be omitted.
\begin{theorem}
Let $N \gg 1$.
Then
$f \in H{\mathcal M}_{p,\phi}({\mathbb R}^n)$
if and only if
${\mathcal M}f \in {\mathcal M}_{p,\phi}({\mathbb R}^n)$.
\end{theorem}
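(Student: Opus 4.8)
The plan is to first observe that, granted the Proposition just stated, the theorem is essentially a formality, and then to indicate how one would prove the underlying norm equivalence along the lines of \cite[Section 3]{NaSa2012}. Indeed, by the definition $(\ref{HM})$ we have $\|f\|_{H\mathcal{M}_{p,\phi}}=\|\sup_{t>0}|e^{t\Delta}f|\|_{\mathcal{M}_{p,\phi}}$, so $f\in H\mathcal{M}_{p,\phi}(\mathbb{R}^n)$ means exactly that this quantity is finite, and by the preceding Proposition that is equivalent to $\|\mathcal{M}f\|_{\mathcal{M}_{p,\phi}}<\infty$, i.e.\ to $\mathcal{M}f\in\mathcal{M}_{p,\phi}(\mathbb{R}^n)$. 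Thus the whole content lies in that norm equivalence, which I would prove in two directions.

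\textbf{Easy direction: $\|f\|_{H\mathcal{M}_{p,\phi}}\lesssim\|\mathcal{M}f\|_{\mathcal{M}_{p,\phi}}$.} Write $e^{t\Delta}f=G_{\sqrt t}*f$ with $G_s(x)=s^{-n}g(x/s)$ and $g(x)=(4\pi)^{-n/2}e^{-|x|^2/4}\in\mathcal{S}(\mathbb{R}^n)$, and fix $C_0>0$ with $C_0^{-1}g\in\mathcal{F}_N$. Then, with $s=\sqrt t$,
\[
|e^{t\Delta}f(x)|=C_0\,\big|s^{-n}(C_0^{-1}g)(s^{-1}\cdot)*f(x)\big|\le C_0\,\mathcal{M}f(x)
\]
for all $t>0$ and $x\in\mathbb{R}^n$, so $\sup_{t>0}|e^{t\Delta}f|\le C_0\,\mathcal{M}f$ pointwise and the Morrey-norm inequality follows.

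\textbf{Hard direction: $\|\mathcal{M}f\|_{\mathcal{M}_{p,\phi}}\lesssim\|f\|_{H\mathcal{M}_{p,\phi}}$.} I would run the Fefferman--Stein--Uchiyama scheme in three steps. (i) For $N$ large enough (depending only on $n,p$), reduce the grand maximal function to the tangential heat maximal function: setting
\[
M_N^{**}f(x)\equiv\sup_{t>0}\sup_{y\in\mathbb{R}^n}\frac{|e^{t\Delta}f(y)|}{(1+|x-y|/\sqrt t)^{N}},
\]
one has $\mathcal{M}f(x)\lesssim M_N^{**}f(x)$, obtained by expanding an arbitrary $\varphi\in\mathcal{F}_N$ through a Calder\'on-type reproducing formula in dilates of $g$ and summing, the bound $p_N(\varphi)\le1$ with $N$ large making the series converge. (ii) Choose $r$ with $n/N<r<p$ (possible precisely because $N$ is large) and invoke the standard pointwise majorization, of the same nature as $(\ref{ineq07})$,
\[
M_N^{**}f(x)\lesssim\Big[M\big((\sup_{t>0}|e^{t\Delta}f|)^r\big)(x)\Big]^{1/r}.
\]
(iii) With $h\equiv\sup_{t>0}|e^{t\Delta}f|$, pass to Morrey norms via $\|[M(h^r)]^{1/r}\|_{\mathcal{M}_{p,\phi}}=\|M(h^r)\|_{\mathcal{M}_{p/r,\phi^r}}^{1/r}$; since $p/r>1$ and $\phi^r\in\mathcal{G}_{p/r}$ (immediate, as $\phi^r(t)t^{nr/p}=(\phi(t)t^{n/p})^r$ is almost increasing), Corollary \ref{jena01} gives $\|M(h^r)\|_{\mathcal{M}_{p/r,\phi^r}}\lesssim\|h^r\|_{\mathcal{M}_{p/r,\phi^r}}=\|h\|_{\mathcal{M}_{p,\phi}}^r$. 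Chaining (i)--(iii) yields $\|\mathcal{M}f\|_{\mathcal{M}_{p,\phi}}\lesssim\|h\|_{\mathcal{M}_{p,\phi}}=\|f\|_{H\mathcal{M}_{p,\phi}}$.

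\textbf{Main obstacle.} The delicate point is step (i): the reduction of the grand maximal operator to the single-kernel operator $M_N^{**}$, together with the bookkeeping of $N$ (it must be large enough both for the reproducing-formula series to converge uniformly over $\varphi\in\mathcal{F}_N$ and for the interval $(n/N,p)$ used in step (ii) to be nonempty), and the a priori finiteness of the maximal functions for $f\in\mathcal{S}'(\mathbb{R}^n)$ that legitimizes step (ii). Everything else is the real-variable Hardy-space machinery of \cite{NaSa2012}; in particular, no density of smooth or integrable functions is needed for this characterization, only the maximal estimates already assembled in Section \ref{s2}, so the whole argument stays within $\mathcal{S}'(\mathbb{R}^n)$.
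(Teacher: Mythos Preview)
Your proposal is correct and follows the same route the paper points to: the paper omits the proof entirely and refers to \cite[Theorem 3.4]{NaSa2012}, and your sketch is precisely the Fefferman--Stein/Uchiyama argument carried out there, with the Morrey-space step handled by Corollary \ref{jena01}. One small caveat: your appeal to $(\ref{ineq07})$ in step (ii) is only an analogy, since $(\ref{ineq07})$ is for band-limited functions; the actual inequality $M_N^{**}f\lesssim[M(h^r)]^{1/r}$ for the heat kernel comes from a shell decomposition using the decay of the Gaussian rather than Plancherel--P\'olya, but this is exactly what is done in \cite{NaSa2012} and \cite{Stein1993}, so the approach is sound.
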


\begin{remark}
When $1<p<\infty$ and $\phi \in {\mathcal G}_p$,
$M$ is bounded on ${\mathcal M}_{p,\phi}({\mathbb R}^n)$
as Proposition \ref{nakaiVagif0} implies.
Also, similarly to \cite{GHS-pre},
we can show that ${\mathcal M}_{p,\phi}({\mathbb R}^n)$ is realized
as a dual space of a Banach space.
By combining these facts,
we can show the following fact for $f \in {\mathcal S}'({\mathbb R}^n)$.
The distribution $f$ is represented by a function in ${\mathcal M}_{p,\phi}({\mathbb R}^n)$
if and only if 
${\mathcal M}f \in {\mathcal M}_{p,\phi}({\mathbb R}^n)$.
\end{remark}

\section{Atomic decomposition}
\label{s4}

We return to the case where $\varphi$ is independent of $x$
and we prove the remaining theorems.
\subsection{Proof of Theorem \ref{thm:131108-2}}

Prior to the proof,
we remark that
(\ref{eq:140327-11156}) implies 
\begin{equation}\label{eq:140327-1115}
\int_r^\infty \frac{\phi(x,s)^p}{\eta(x,s)^ps}\,ds
\le C
\frac{\phi(x,r)^p}{\eta(x,r)^p}
\end{equation}
and 
\begin{equation}\label{eq:140327-3}
\int_r^\infty \phi(x,s)\frac{ds}{s} \le C
\phi(x,r).
\end{equation}
For the proof of (\ref{eq:140327-1115}),
we refer to \cite{Nakai94}. 

We start with collecting auxiliary estimates.
\begin{lemma}
Let $p,\eta,a_j,Q_j$ be the same as Theorem \ref{thm:131108-2}.
Then
\begin{equation}\label{eq:140327-1113}
\|Ma_j\|_{{\mathcal M}_{p,\eta}} \lesssim
\frac{1}{\eta(Q_j)}.
\end{equation}
\end{lemma}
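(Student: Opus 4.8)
The plan is to obtain \eqref{eq:140327-1113} from the mapping properties of the Hardy--Littlewood maximal operator on generalized Morrey spaces, combined with the elementary nesting ${\mathcal M}_{q,\eta}({\mathbb R}^n)\hookrightarrow{\mathcal M}_{p,\eta}({\mathbb R}^n)$, which holds whenever $p\le q$ (a pointwise application of Hölder's inequality on each cube, so that the norm actually decreases). The support and vanishing-moment conditions imposed on $a_j$ in Theorem \ref{thm:131108-2} are irrelevant for this particular estimate; only $\|a_j\|_{{\mathcal M}_{q,\eta}}\le 1/\eta(Q_j)$ is used. As a preliminary I would observe that, since $\eta\in{\mathcal G}_1$ is non-increasing in its second variable, the couple $(\eta,\eta)$ satisfies condition \eqref{eq3.6.VZ} with $p$ replaced by any $q$ with $1\le q<\infty$: choosing $s=2\tau$ and using monotonicity of $\eta$, $\inf_{\tau<s<\infty}\eta(x,s)s^{n/q}\le\eta(x,2\tau)(2\tau)^{n/q}\le 2^{n/q}\eta(x,\tau)\tau^{n/q}$, so the left-hand side of \eqref{eq3.6.VZ} is at most $2^{n/q}\eta(x,t)$. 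By Proposition \ref{3.4.}, $M$ is therefore bounded on ${\mathcal M}_{q,\eta}({\mathbb R}^n)$ for $1<q<\infty$ and maps ${\mathcal M}_{1,\eta}({\mathbb R}^n)$ into $W{\mathcal M}_{1,\eta}({\mathbb R}^n)$.

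Next I would split according to the value of $q$ in \eqref{eq:140327-1111}. If $1<q\le\infty$, put $r\equiv\min\{q,2\}\in(1,\infty)$. Then $\|a_j\|_{{\mathcal M}_{r,\eta}}\le\|a_j\|_{{\mathcal M}_{q,\eta}}$ by the nesting, boundedness of $M$ on ${\mathcal M}_{r,\eta}({\mathbb R}^n)$ gives $\|Ma_j\|_{{\mathcal M}_{r,\eta}}\lesssim\|a_j\|_{{\mathcal M}_{r,\eta}}$, and a second use of the nesting (legitimate because $p\le 1<r$) gives $\|Ma_j\|_{{\mathcal M}_{p,\eta}}\le\|Ma_j\|_{{\mathcal M}_{r,\eta}}$; chaining these three inequalities yields $\|Ma_j\|_{{\mathcal M}_{p,\eta}}\lesssim 1/\eta(Q_j)$. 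If $q=1$, then \eqref{eq:140327-1111} forces $p<1$; the nesting gives $\|a_j\|_{{\mathcal M}_{1,\eta}}\le 1/\eta(Q_j)$, whence $\|Ma_j\|_{W{\mathcal M}_{1,\eta}}\lesssim 1/\eta(Q_j)$, and it remains to upgrade this weak bound to the desired strong bound.

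For that upgrade I would establish the embedding $W{\mathcal M}_{1,\eta}({\mathbb R}^n)\hookrightarrow{\mathcal M}_{p,\eta}({\mathbb R}^n)$ for $0<p<1$. Fix a cube $R$ and write $A\equiv\|g\|_{W{\mathcal M}_{1,\eta}}$; the definition of the weak norm immediately gives $|R\cap\{|g|>t\}|\le\min\{\,|R|,\ A\,\eta(R)\,|R|/t\,\}$ for every $t>0$. Inserting this into the layer-cake formula and splitting the $t$-integral at $t=A\,\eta(R)$,
\[
\int_R|g(x)|^p\,dx
=p\int_0^\infty t^{p-1}\,|R\cap\{|g|>t\}|\,dt
\le\frac{(A\,\eta(R))^p\,|R|}{1-p},
\]
where finiteness of $\int_{A\eta(R)}^\infty t^{p-2}\,dt$ is exactly the point at which $p<1$ is needed. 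Dividing by $|R|$, taking $p$-th roots, dividing by $\eta(R)$, and taking the supremum over $R$ gives $\|g\|_{{\mathcal M}_{p,\eta}}\le(1-p)^{-1/p}A$; applying this with $g=Ma_j$ finishes the proof. I do not expect a genuine obstacle here: the only points requiring a little care are checking that mere monotonicity of $\eta$ suffices to invoke the maximal-operator bounds (handled in the preliminary step), and isolating the endpoints $q=1$ and (formally) $q=\infty$ cleanly, so that the standard Morrey-space argument always runs through a genuine exponent $r\in(1,\infty)$.
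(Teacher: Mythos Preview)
Your argument is correct. The overall strategy---maximal-operator bounds on generalized Morrey spaces combined with the H\"older nesting ${\mathcal M}_{q,\eta}\hookrightarrow{\mathcal M}_{p,\eta}$ for $p\le q$---is exactly what the paper uses, and your verification that mere monotonicity of $\eta$ already yields condition \eqref{eq3.6.VZ} is a nice explicit touch.

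The organization differs in two minor ways. First, the paper splits cases according to $p$ rather than $q$: for $p<1$ it invokes the strong boundedness $M:{\mathcal M}_{1,\eta}\to{\mathcal M}_{p,\eta}$ directly as a black box from \cite[Lemma 3.4]{SST12-2}, while for $p=1$ (forcing $q>1$) it uses the ${\mathcal M}_{q,\eta}$-boundedness of $M$ followed by nesting. Second, where the paper simply cites that black box, you instead factor it as the weak-type bound $M:{\mathcal M}_{1,\eta}\to W{\mathcal M}_{1,\eta}$ (available inside the paper via Proposition~\ref{3.4.}) followed by the Kolmogorov-type embedding $W{\mathcal M}_{1,\eta}\hookrightarrow{\mathcal M}_{p,\eta}$, which you prove by the layer-cake computation. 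This is precisely the standard proof of the cited result, so your version is more self-contained but not genuinely different in content. Your case split by $q$ is arguably cleaner, since the awkward endpoint is really $q=1$ rather than $p=1$.
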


\begin{proof}
When $p<1$.
we use the boundedness
of the Hardy-Littlewood maximal operator
$M:{\mathcal M}_{1,\eta}({\mathbb R}^n) \to {\mathcal M}_{p,\eta}({\mathbb R}^n)$; 
\cite{SST12-2} for more details.
When $p=1$,
this can be replaced by the
${\mathcal M}_{q,\eta}({\mathbb R}^n)$-boundedness of $M$.
Using this boundedness and (\ref{eq:140327-1111})
and
(\ref{eq:thm1-1-a}),
we obtain (\ref{eq:140327-1113}).
\end{proof}

Note that (\ref{eq:140327-1113}) readily yields
\begin{equation}\label{eq:140327-1113a}
\|(Ma_j)^p\|_{{\mathcal M}_{1,\eta^p}} 
=
(\|Ma_j\|_{{\mathcal M}_{p,\eta}})^p 
\lesssim
\frac{1}{\eta(Q_j)^p}.
\end{equation}

We invoke an estimate from \cite{NaSa2012};
\begin{equation}\label{eq:150308-71}
{\mathcal M}a_j(x)
\lesssim
\chi_{3Q_j}(x)Ma_j(x)
+
\frac{\ell(Q_j)^{n+d+1}}{
\ell(Q_j)^{n+d+1}+|x-c(Q_j)|^{n+d+1}}.
\end{equation}
	
For the time being,
we assume that there exists $N \in {\mathbb N}$
such that $\lambda_j=0$ whenever $j \ge N$.

Observe first that
\[
{\mathcal M}f(x)
\le
\sum_{j=1}^\infty |\lambda_j|{\mathcal M}a_j(x)
\le
\left(
\sum_{j=1}^\infty |\lambda_j|^p{\mathcal M}a_j(x)^p
\right)^{\frac{1}{p}},
\]
since ${\mathcal M}$ is sublinear and $0<p \le 1$.
Set
\[
\tau\equiv \frac{n+d+1}{n} \in (1,\infty).
\]
Consequently from (\ref{eq:150308-71}),
we obtain
\begin{align*}
{\mathcal M}f(x)
&\lesssim
\left(
\sum_{j=1}^\infty |\lambda_j|^p\chi_{3Q_j}(x)Ma_j(x)^p
\right)^{\frac{1}{p}}\\
&\quad +
\left(
\sum_{j=1}^\infty
\frac{|\lambda_j|^p\ell(Q_j)^{p(n+d+1)}}{
\ell(Q_j)^{p(n+d+1)}+|x-c(Q_j)|^{p(n+d+1)}}
\right)^{\frac{1}{p}}\\
&\lesssim
\left(
\sum_{j=1}^\infty |\lambda_j|^p\chi_{3Q_j}(x)Ma_j(x)^p
\right)^{\frac{1}{p}}
+
\left(
\sum_{j=1}^\infty |\lambda_j|^p
M\chi_{3Q_j}(x)^{p\tau}
\right)^{\frac{1}{p}}.
\end{align*}
Thus,
by the quasi-triangle inequality,
\begin{align*}
\lefteqn{
\|{\mathcal M}f\|_{{\mathcal M}_{p,\phi}}
}\\
&\lesssim
\left\|\left(
\sum_{j=1}^\infty (|\lambda_j|\chi_{3Q_j}Ma_j)^p
\right)^{\frac{1}{p}}\right\|_{{\mathcal M}_{p,\phi}}+
\left\|\left(
\sum_{j=1}^\infty |\lambda_j|^p
(M\chi_{3Q_j})^{p\tau}
\right)^{\frac{1}{p}}\right\|_{{\mathcal M}_{p,\phi}}\\
&=
\left(
\left\|
\sum_{j=1}^\infty (|\lambda_j|\chi_{3Q_j}Ma_j)^p
\right\|_{{\mathcal M}_{1,\phi^p}}
\right)^{\frac{1}{p}}+
\left(
\left\|\left(
\sum_{j=1}^\infty |\lambda_j|^p
(M\chi_{3Q_j})^{p\tau}
\right)^{\frac{1}{p\tau}}\right\|_{{\mathcal M}_{p\tau,\phi^\tau}}
\right)^{\tau}.
\end{align*}
Note that (\ref{eq:140327-3}) and (\ref{eq:140327-1115}) allows us
to use Theorem \ref{lem:131108-2} and Corollary \ref{cor:140327-111},
respectively.
Thus, we obtain
\begin{align*}
\|{\mathcal M}f\|_{{\mathcal M}_{p,\phi}}
&\lesssim
\left\|\left(
\sum_{j=1}^\infty (|\lambda_j|\chi_{Q_j})^p
\right)^{\frac{1}{p}}\right\|_{{\mathcal M}_{p,\phi}}.
\end{align*}
Thus, we obtain the desired result.

\subsection{Proof of Theorem \ref{thm:2}}

We define the topology on $\mathcal{S}(\mathbb{R}^n)$ with the norm
$\{\rho_N\}_{N\in \mathbb{N}}$ which is given by the following formula:
\[
\rho_{N}(\varphi)\equiv \sum\limits_{|\alpha|\leq N}
\sup\limits_{x \in \mathbb{R}^n} (1+|x|)^N|\partial^{\alpha}\varphi(x)|.
\]
We define
\begin{equation}\label{eq:FN}
\mathcal{F}_{N} \equiv \{\varphi \in \mathcal{S}(\mathbb{R}^n):\rho_{N}(\varphi)\leq 1\}.
\end{equation}

\begin{definition}\label{gm}
The grand maximal operator $\mathcal{M}f$ is defined by
\[
\mathcal{M}f(x)\equiv \sup \{|t^{-n} \varphi (t^{-1} \cdot ) *f(x)|: t>0, \varphi \in \mathcal{F}_N \}
\]
for all $f\in \mathcal{S}'(\mathbb{R}^n)$ and $x\in \mathbb{R}^n$.
\end{definition}

The following proposition can be proved
similar to \cite[Section 3]{NaSa2012}.

We invoke the following lemma.
By $C^{\infty}_{{\rm comp}}({\mathbb R}^n)$
we denote the set of all compactly supported smooth functions
in ${\mathbb R}^n$.
We refer to \cite{Stein1993} for the proof.
\begin{lemma}\label{lem:CZd}
Let $f \in {\mathcal S}'({\mathbb R}^n)$,
$d \in \{0,1,2,\cdots\}$ and $j \in {\mathbb Z}$.
Then there exist
collections of cubes $\{Q_{j,k}\}_{k \in K_j}$
and functions
$\{\eta_{j,k}\}_{k \in K_j}\subset C^{\infty}_{{\rm comp}}({\mathbb R}^n)$,
which are all indexed by a set $K_j$ for every $j$,
and a decomposition
\[
f=g_j+b_j, \quad b_j=\sum_{k\in K_j}b_{j,k},
\]
such that{\rm:}
\begin{enumerate}
\item[\rm(0)]
$g_j,b_j, b_{j,k} \in {\mathcal S}'({\mathbb R}^n)$.
\item[\rm(i)]
Define
${\mathcal O}_j\equiv \{y \in {\mathbb R}^n:{\mathcal M} f(y)>2^j\}$
and consider its Whitney decomposition.
Then the cubes $\{Q_{j,k}\}_{k \in K_j}$
have the bounded intersection property, and
\begin{equation}\label{eq:120922-1}
{\mathcal O}_j
=
\bigcup_{k \in K_j} Q_{j,k}.
\end{equation}
\item[\rm(ii)]
Consider the partition of unity
$\{\eta_{j,k}\}_{k \in K_j}$
with respect to
$\{Q_{j,k}\}_{k \in K_j}$.
Then each function $\eta_{j,k}$ is supported in $Q_{j,k}$ and
$$
 \sum_{k \in K_j}\eta_{j,k}
=\chi_{\{y\in{\mathbb R}^n\,:\,{\mathcal M} f(y)>2^j\}},
\quad 0\le\eta_{j,k}\le1.
$$
\item[\rm(iii)]
The distribution $g_j$ satisfies the inequality:
\begin{equation}\label{eq:120919-13}
 {\mathcal M}g_j(x)
 \lesssim
 {\mathcal M} f(x)\chi_{{\mathcal O}_j{}^c}(x)+
 2^j\sum_{k \in K_j}
\frac{{\ell_{j,k}}^{n+d+1}}{(\ell_{j,k}+|x-x_{j,k}|)^{n+d+1}}
\end{equation}
for all $x \in {\mathbb R}^n$.
\item[\rm(iv)]
Each distribution $b_{j,k}$ is given by $b_{j,k}=(f-c_{j,k})\eta_{j,k}$
with a certain polynomial $c_{j,k}\in{\mathcal P}_d({\mathbb R}^n)$ satisfying
\[
\langle f-c_{j,k},\eta_{j,k} \cdot q \rangle=0
\mbox{ for all }
q\in{\mathcal P}_d({\mathbb R}^n),
\]
and
\begin{equation}\label{eq:120919-14}
 {\mathcal M}b_{j,k}(x)
 \lesssim
 {\mathcal M} f(x) \chi_{Q_{j,k}}(x) +
 2^j\cdot\frac{{\ell_{j,k}}^{n+d+1}}{|x-x_{j,k}|^{n+d+1}}
\chi_{{\mathbb R}^n\setminus Q_{j,k}}(x)
\end{equation}
for all $x \in {\mathbb R}^n$.
\end{enumerate}
In the above,
$x_{j,k}$ and $\ell_{j,k}$ denote
the center and the side-length of $Q_{j,k}$, respectively,
and the implicit constants
are dependent only on $n$.
If we assume $f \in L^1_{\rm loc}({\mathbb R}^n)$ in addition,
then we have
\begin{equation}\label{eq:150302-21}
\|g_j\|_{L^\infty} \lesssim 2^{-j}.
\end{equation}

\end{lemma}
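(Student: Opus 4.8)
The plan is to prove \eqref{eq:150302-21} by exploiting the extra hypothesis $f\in L^1_{\rm loc}(\mathbb{R}^n)$, which lets us treat $g_j=f-b_j$ pointwise, and to estimate $g_j$ separately on $\mathcal{O}_j^c$ and on $\mathcal{O}_j$. On the complement $\mathcal{O}_j^c$ every bad piece $b_{j,k}=(f-c_{j,k})\eta_{j,k}$ vanishes, since $\supp\eta_{j,k}\subset Q_{j,k}\subset\mathcal{O}_j$ by (i) and (ii); hence $g_j=f$ there. On $\mathcal{O}_j$ the relation $\sum_{k\in K_j}\eta_{j,k}=1$ from (ii) collapses $g_j=f-\sum_k(f-c_{j,k})\eta_{j,k}$ into the locally finite sum $g_j=\sum_{k\in K_j}c_{j,k}\eta_{j,k}$ of the correcting polynomials. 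The target bound $2^{-j}$ is the level at which the decomposition is carried out, as recorded in \eqref{eq:150302-21}.

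First I would dispose of $\mathcal{O}_j^c$. Fixing a single $\psi\in\mathcal{F}_N$ with nonzero integral, the averages $t^{-n}\psi(t^{-1}\cdot)*f(x)$ converge to a nonzero multiple of $f(x)$ as $t\to0$ at every Lebesgue point of $f$, and each of them is bounded by a constant times $\mathcal{M}f(x)$. Since $f\in L^1_{\rm loc}(\mathbb{R}^n)$, almost every point is a Lebesgue point, so $|f(x)|\lesssim\mathcal{M}f(x)$ for a.e.\ $x$. By the definition of $\mathcal{O}_j$, the value $\mathcal{M}f(x)$ does not exceed the level at which the decomposition is performed once $x\in\mathcal{O}_j^c$, and therefore $|g_j(x)|=|f(x)|\lesssim 2^{-j}$ for a.e.\ $x\in\mathcal{O}_j^c$.

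Next I would control the polynomials $c_{j,k}$ on $\mathcal{O}_j$. The cancellation property in (iv), namely $\langle f-c_{j,k},\eta_{j,k}\,q\rangle=0$ for all $q\in\mathcal{P}_d(\mathbb{R}^n)$, identifies $c_{j,k}$ as the $\eta_{j,k}$-weighted projection of $f$ onto $\mathcal{P}_d(\mathbb{R}^n)$. Rescaling $Q_{j,k}$ to the unit cube and invoking the equivalence of all norms on the finite-dimensional space $\mathcal{P}_d$ (uniformly after rescaling), the sup-norm of $c_{j,k}$ over $Q_{j,k}$ is dominated by an average of $|f|$ over a fixed dilate of $Q_{j,k}$. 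The Whitney structure from (i) guarantees that this dilate meets $\mathcal{O}_j^c$, so the average is controlled by $\mathcal{M}f$ at a point of $\mathcal{O}_j^c$, hence again by $2^{-j}$. Since $0\le\eta_{j,k}\le1$ and $\{Q_{j,k}\}_{k\in K_j}$ has the bounded intersection property, summing gives $|g_j(x)|=\bigl|\sum_k c_{j,k}(x)\eta_{j,k}(x)\bigr|\lesssim 2^{-j}$ for $x\in\mathcal{O}_j$.

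Combining the two regions yields $\|g_j\|_{L^\infty}\lesssim 2^{-j}$, which is \eqref{eq:150302-21}. I expect the polynomial step to be the main obstacle: one must render the norm equivalence on $\mathcal{P}_d$ quantitative and uniform in $k$ after rescaling to the unit cube, and verify that a controlled dilate of each Whitney cube $Q_{j,k}$ always reaches $\mathcal{O}_j^c$, which is exactly where the geometry of the Whitney decomposition underlying Lemma \ref{lem:CZd} is used.
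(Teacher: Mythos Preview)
The paper does not give its own proof of Lemma~\ref{lem:CZd}; it simply states ``We refer to \cite{Stein1993} for the proof.'' Your sketch of the $L^\infty$ bound on $g_j$ is exactly the classical argument found there: identify $g_j$ with $f$ on $\mathcal{O}_j^c$ and with the locally finite combination $\sum_k c_{j,k}\eta_{j,k}$ on $\mathcal{O}_j$, then control the polynomials via a point of $\mathcal{O}_j^c$ reached by a fixed dilate of the Whitney cube. So there is nothing to contrast --- your route \emph{is} the cited one.

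One remark: both the stated inequality \eqref{eq:150302-21} and your write-up carry the exponent $2^{-j}$, but your own argument actually produces $2^{j}$. On $\mathcal{O}_j^c$ you have $\mathcal{M}f(x)\le 2^j$, not $2^{-j}$, and the Whitney/polynomial step likewise yields $\sup_{Q_{j,k}}|c_{j,k}|\lesssim 2^j$. This is a typo in the paper's statement that you have copied; the correct bound (and the one used downstream, e.g.\ in the construction of the atoms $A_{j,k}$ with $|A_{j,k}|\le C_0 2^j$) is $\|g_j\|_{L^\infty}\lesssim 2^j$.
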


\begin{lemma}
Let $\varphi \in {\mathcal S}({\mathbb R}^n)$
and $\theta \in (0,1)$.
Keep to the same notation as Lemma \ref{lem:CZd}.
Then we have
\begin{equation}\label{eq:120919-11}
|\langle b_{j},\varphi \rangle|
\le C_{\varphi,\theta}
\left(
\sum_{l=1}^\infty
\left(
2^{-\frac{n\theta}{p}}
\left\|
 {\mathcal M} f \cdot \chi_{{\mathcal O}_j}
\right\|_{L^{p}([-2^l,2^l]^n)}
\right)^{\frac{1}{\kappa}}\right)^{\kappa}
\end{equation}
and
\begin{equation}\label{eq:120919-12}
|\langle g_{j},\varphi \rangle|
\le C_{\varphi,\theta}
\left(
\sum_{l=1}^\infty
\left(
2^{-\frac{n\theta}{p}}
\left\|
2^j \chi_{{\mathcal O}_j}
\right\|_{L^{p}([-2^l,2^l]^n)}
\right)^{\frac{1}{\kappa}}\right)^{\kappa}
+C_{\varphi,\theta}
\| {\mathcal M} f \cdot \chi_{{\mathcal O}_j^c}\|_{L^p([-1,1]^n)},
\end{equation}
where the constants $C_{\varphi,\theta}$ in $(\ref{eq:120919-11})$
and $(\ref{eq:120919-12})$ depend on $\varphi$ and $\theta$
but not on $j$ or $k$.
\end{lemma}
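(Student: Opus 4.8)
The plan is to convert each pairing $\langle b_j,\varphi\rangle$, $\langle g_j,\varphi\rangle$ into an $L^p([-1,1]^n)$ norm of a grand maximal function, feed in the pointwise bounds $(\ref{eq:120919-13})$ and $(\ref{eq:120919-14})$ from Lemma \ref{lem:CZd}, and then reduce to Lemma \ref{lem:140327-1}. First I would record the elementary reduction: for fixed $\varphi\in{\mathcal S}({\mathbb R}^n)$ there is $C_\varphi>0$ with $|\langle h,\varphi\rangle|\le C_\varphi\,{\mathcal M}h(y)$ for every $h\in{\mathcal S}'({\mathbb R}^n)$ and every $y\in[-1,1]^n$, because $\langle h,\varphi\rangle=(\varphi(y-\cdot)*h)(y)$ and the seminorms $\rho_N(\varphi(y-\cdot))$ stay bounded by a multiple of $\rho_N(\varphi)$ uniformly for $y\in[-1,1]^n$, so that $C_\varphi^{-1}\varphi(y-\cdot)\in{\mathcal F}_N$; taking $t=1$ in the definition of ${\mathcal M}$ gives the claim. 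Raising to the $p$-th power and integrating over $[-1,1]^n$ yields $|\langle h,\varphi\rangle|\lesssim_\varphi\|{\mathcal M}h\|_{L^p([-1,1]^n)}$, which I will apply with $h=b_j$ and $h=g_j$ (both in ${\mathcal S}'({\mathbb R}^n)$ by Lemma \ref{lem:CZd}).

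For $(\ref{eq:120919-11})$, since $b_j=\sum_{k\in K_j}b_{j,k}$ in ${\mathcal S}'({\mathbb R}^n)$ and ${\mathcal M}$ is sublinear, ${\mathcal M}b_j\le\sum_k{\mathcal M}b_{j,k}$. I would then insert $(\ref{eq:120919-14})$, use the bounded overlap of $\{Q_{j,k}\}_k$ (so $\sum_k\chi_{Q_{j,k}}\lesssim\chi_{{\mathcal O}_j}$) and the elementary pointwise bound $\ell_{j,k}^{n+d+1}/(\ell_{j,k}+|x-x_{j,k}|)^{n+d+1}\lesssim(M\chi_{Q_{j,k}}(x))^{\kappa}$ (valid for $\kappa$ with $n\kappa\le n+d+1$), obtaining
\[
{\mathcal M}b_j(x)\lesssim{\mathcal M}f(x)\chi_{{\mathcal O}_j}(x)+2^j\sum_{k\in K_j}(M\chi_{Q_{j,k}}(x))^{\kappa}.
\]
Taking $L^p([-1,1]^n)$ norms, splitting into the two summands via $\|u+v\|_{L^p}^p\le\|u\|_{L^p}^p+\|v\|_{L^p}^p$, and bounding the second summand by Lemma \ref{lem:140327-1} with $E_k=Q_{j,k}$ — keeping the sum $\sum_k(M\chi_{Q_{j,k}})^{\kappa}$ intact inside the norm — I would then use $2^j\chi_{Q_{j,k}}\le2^j\chi_{{\mathcal O}_j}\le{\mathcal M}f\cdot\chi_{{\mathcal O}_j}$ on ${\mathcal O}_j=\{{\mathcal M}f>2^j\}$ together with $2^j(\sum_l a_l^{1/\kappa})^{\kappa}=(\sum_l(2^ja_l)^{1/\kappa})^{\kappa}$ to collapse both summands into the single expression on the right of $(\ref{eq:120919-11})$; the first summand is absorbed after retaining only the $l=1$ term of that series, which dominates $\|{\mathcal M}f\chi_{{\mathcal O}_j}\|_{L^p([-1,1]^n)}$ up to the fixed factor $2^{n\theta/p}$ since $\kappa\ge1$.

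The proof of $(\ref{eq:120919-12})$ is entirely parallel: the reduction of the first paragraph gives $|\langle g_j,\varphi\rangle|\lesssim_\varphi\|{\mathcal M}g_j\|_{L^p([-1,1]^n)}$, and $(\ref{eq:120919-13})$ with the same tail bound gives ${\mathcal M}g_j(x)\lesssim{\mathcal M}f(x)\chi_{{\mathcal O}_j^c}(x)+2^j\sum_{k\in K_j}(M\chi_{Q_{j,k}}(x))^{\kappa}$. The first term produces the second summand on the right of $(\ref{eq:120919-12})$ verbatim (it is already supported in $[-1,1]^n$), and the second term is handled exactly as for $b_j$, using $2^j\sum_k\chi_{Q_{j,k}}\lesssim2^j\chi_{{\mathcal O}_j}$ and Lemma \ref{lem:140327-1}, producing the first summand.

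I expect the only delicate point to be the order of operations in the second paragraph: since $p\le1$, Lemma \ref{lem:140327-1} must be applied to $\|\sum_k(M\chi_{Q_{j,k}})^{\kappa}\|_{L^p([-1,1]^n)}$ as it stands, and distributing the $p$-th power over $k$ prematurely — replacing the summand $(M\chi_{Q_{j,k}})^{\kappa}$ by $(M\chi_{Q_{j,k}})^{p\kappa}$ — would destroy the match with the right-hand sides of $(\ref{eq:120919-11})$ and $(\ref{eq:120919-12})$. The remaining ingredients (that $b_j,g_j\in{\mathcal S}'({\mathbb R}^n)$ with $b_j=\sum_kb_{j,k}$ convergent in ${\mathcal S}'({\mathbb R}^n)$, the bounded overlap of the Whitney cubes, and the availability of $\kappa$ with $p\kappa>1$ and $n\kappa\le n+d+1$) are all in place from Lemma \ref{lem:CZd} and the standing hypotheses.
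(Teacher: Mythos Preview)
Your proposal is correct and follows essentially the same route as the paper: reduce $|\langle h,\varphi\rangle|$ to $\|{\mathcal M}h\|_{L^p([-1,1]^n)}$ via the grand maximal operator, insert the pointwise bounds $(\ref{eq:120919-13})$ and $(\ref{eq:120919-14})$, convert the tail terms to $(M\chi_{Q_{j,k}})^{\kappa}$ with $\kappa=(n+d+1)/n$, and then invoke Lemma~\ref{lem:140327-1}. Your write-up is in fact slightly more explicit than the paper's about how the two resulting summands are merged into the single series on the right of $(\ref{eq:120919-11})$ (via $2^j\chi_{{\mathcal O}_j}\le{\mathcal M}f\chi_{{\mathcal O}_j}$ and absorption into the $l=1$ term), which the paper leaves implicit.
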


\begin{proof}
For some large constant $M \equiv M_\varphi$,
we have $\psi_x \equiv M^{-1}\varphi(x-\cdot) \in {\mathcal F}_N$
for all $x \in [-1,1]^n$,
so that
\[
|\langle b_{j},\varphi \rangle|
=
|b_j*\psi_x(z)|_{z=x}
\le M
\inf_{x \in [-1,1]^n}{\mathcal M}b_j(x).
\]
Thus, we have
\[
|\langle b_{j},\varphi \rangle|
\lesssim
\inf_{x \in [-1,1]^n}{\mathcal M}b_j(x)
\lesssim
\inf_{x \in [-1,1]^n}
\sum_{k \in K_j}{\mathcal M}b_{j,k}(x).
\]
Observe also that
\[
M\chi_Q(x) \gtrsim \frac{|Q|}{|Q|+|x-x_Q|^n}
\ge \frac{|Q|}{|x-x_Q|^n}\chi_{{\mathbb R}^n \setminus Q}(x)
\quad(x \in {\mathbb R}^n),
\]
if $Q$ is a cube centered at $x_Q$.
It follows from (\ref{eq:120919-14})
that
\begin{align}
\sum_{k \in K_j}{\mathcal M}b_{j,k}(x)
&\lesssim
\sum_{k \in K_j}\left(
 {\mathcal M} f(x) \chi_{Q_{j,k}}(x) +
 2^j\cdot\frac{{\ell_{j,k}}^{n+d+1}}{|x-x_{j,k}|^{n+d+1}}
\chi_{{\mathbb R}^n\setminus Q_{j,k}}(x)
\right)\nonumber\\
\label{eq:140327-2}
&\lesssim
 {\mathcal M} f(x) \chi_{{\mathcal O}_j}(x) +
 2^j
\sum_{k \in K_j}M\chi_{Q_{j,k}}(x)^{\frac{n+d+1}{n}}.
\end{align}
We abbreviate $\kappa\equiv \dfrac{n+d+1}{n}$.
From (\ref{eq:140327-2}),
we deduce
\begin{align*}
\|{\mathcal M}b_j\|_{L^p([-1,1]^n)}
&\lesssim
\left\|
 {\mathcal M} f \cdot \chi_{{\mathcal O}_j} +
 2^j
\sum_{k \in K_j}(M\chi_{Q_{j,k}})^{\kappa}
\right\|_{L^p([-1,1]^n)}\\
&\lesssim
\left\|
 {\mathcal M} f \cdot \chi_{{\mathcal O}_j}
\right\|_{L^p([-1,1]^n)} +
\left\| 2^j
\sum_{k \in K_j}(M\chi_{Q_{j,k}})^{\kappa}
\right\|_{L^p([-1,1]^n)}.
\end{align*}
By using (\ref{lem:140327-1}),
we obtain
\begin{align*}
\lefteqn{
\|{\mathcal M}b_j\|_{L^p([-1,1]^n)}
}\\
&
\lesssim
\left\|
 {\mathcal M} f \cdot \chi_{{\mathcal O}_j}
\right\|_{L^p([-1,1]^n)}
+
\left(
\sum_{l=1}^\infty
\left(
2^{-\frac{n\theta}{p}}
\left\|
2^j \chi_{{\mathcal O}_j}
\right\|_{L^{p}([-2^l,2^l]^n)}
\right)^{\frac{1}{\kappa}}\right)^{\kappa}.
\end{align*}
So, the estimate for the first term is valid.

In the same way we can prove
(\ref{eq:120919-12}).
Indeed, by using the Fefferman-Stein inequality
for $A_1$-weighted Lebesgue spaces \cite{AndersenJohn},
we obtain
\begin{eqnarray*}
&&
\|{\mathcal M}g_j\|_{L^p([-1,1]^n)}\\
&&\lesssim\left\|
 {\mathcal M} f \cdot \chi_{{\mathcal O}_j{}^c}
\right\|_{L^p([-1,1]^n)}
+
\left\| \sum_{k \in K_j}
\frac{2^j\cdot{\ell_{j,k}}^{n+d+1}}{(\ell_{j,k}+|x-x_{j,k}|)^{n+d+1}}
\right\|_{L^p([-1,1]^n)}\\
&&\lesssim\left\|
 {\mathcal M} f \cdot \chi_{{\mathcal O}_j{}^c}
\right\|_{L^p([-1,1]^n)}+
\left\| \sum_{k \in K_j}2^j(M\chi_{Q_{j,k}})^{\frac{n+d+1}{n}}
\right\|_{L^p([-1,1]^n)}\\
&&\lesssim\left\|
 {\mathcal M} f \cdot \chi_{{\mathcal O}_j{}^c}
\right\|_{L^p([-1,1]^n)}
+
\left(
\sum_{l=1}^\infty
\left(
2^{-\frac{n\theta}{p}}
\left\|
2^j \chi_{{\mathcal O}_j}
\right\|_{L^{p}([-2^l,2^l]^n)}
\right)^{\frac{1}{\kappa}}\right)^{\kappa}.
\end{eqnarray*}
Thus, (\ref{eq:120919-12}) is proved.
\end{proof}

The key observation is the following.
\begin{lemma}\label{lem:131127-1}
Assume $(\ref{eq:140327-3})$.
In the notation of Lemma {\rm\ref{lem:CZd}},
in the topology of ${\mathcal S}'({\mathbb R}^n)$,
we have
$g_j \to 0$ as $j \to -\infty$
and
$b_j \to 0$ as $j \to \infty$.
In particular,
\[
f=\sum_{j=-\infty}^\infty (g_{j+1}-g_j)
\]
in the topology of ${\mathcal S}'({\mathbb R}^n)$.
\end{lemma}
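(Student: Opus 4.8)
The plan is to establish the two convergence claims separately, using the estimates \eqref{eq:120919-11} and \eqref{eq:120919-12} together with the growth condition \eqref{eq:140327-3}, and then to deduce the telescoping representation. Throughout I fix a test function $\varphi \in {\mathcal S}({\mathbb R}^n)$ and a parameter $\theta \in (0,1)$; the goal is to show $\langle g_j,\varphi \rangle \to 0$ as $j \to -\infty$ and $\langle b_j,\varphi \rangle \to 0$ as $j \to \infty$. Since test functions of the form $\varphi(\cdot - z)$ for $|z| \le 1$ are, up to a fixed constant, again in ${\mathcal F}_N$, it suffices to control the quantities on the right-hand sides of \eqref{eq:120919-11} and \eqref{eq:120919-12}, which is where the Morrey norm of ${\mathcal M}f$ enters.

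First I would handle $b_j \to 0$. By \eqref{eq:120919-11}, it is enough to show that
\[
\left(
\sum_{l=1}^\infty
\left(
2^{-\frac{n\theta}{p}}
\left\|
 {\mathcal M} f \cdot \chi_{{\mathcal O}_j}
\right\|_{L^{p}([-2^l,2^l]^n)}
\right)^{\frac{1}{\kappa}}\right)^{\kappa} \to 0
\quad (j \to \infty).
\]
The key point is that ${\mathcal O}_j = \{{\mathcal M}f > 2^j\}$ shrinks as $j$ increases and $\bigcap_j {\mathcal O}_j = \emptyset$ (a.e.), because ${\mathcal M}f \in {\mathcal M}_{p,\phi}({\mathbb R}^n) \subset L^p_{\rm loc}({\mathbb R}^n)$ is finite a.e. On each fixed cube $[-2^l,2^l]^n$ one has $\|{\mathcal M}f \cdot \chi_{{\mathcal O}_j}\|_{L^p([-2^l,2^l]^n)} \to 0$ by dominated convergence (dominated by ${\mathcal M}f \cdot \chi_{[-2^l,2^l]^n} \in L^p$). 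To pass this through the $\ell^{1/\kappa}$-type sum over $l$, I would first get a summable-in-$l$ majorant that is independent of $j$: using the Morrey bound $\|{\mathcal M}f\|_{L^p([-2^l,2^l]^n)} \lesssim 2^{ln/p}\phi({\rm o},2^l)\|{\mathcal M}f\|_{{\mathcal M}_{p,\phi}}$, one sees each summand is bounded by a constant times $(2^{-n\theta/p} 2^{ln/p}\phi({\rm o},2^l))^{1/\kappa}$; since $\phi \in {\mathcal G}_1$ controls the growth of $\phi({\rm o},2^l)$ and since $\kappa = (n+d+1)/n > 1$, choosing $\theta$ close enough to $1$ makes this a convergent geometric-type series. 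Then the Weierstrass $M$-test / dominated convergence for series lets me interchange limit and sum, and each term tends to $0$. The condition \eqref{eq:140327-3} is what guarantees the relevant sum in $l$ (after incorporating the factor coming from the Whitney cubes) is finite; this is the structural role it plays.

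Next, for $g_j \to 0$ as $j \to -\infty$, I would argue from \eqref{eq:120919-12}. The second term $\|{\mathcal M}f \cdot \chi_{{\mathcal O}_j^c}\|_{L^p([-1,1]^n)}$ tends to $0$ because ${\mathcal O}_j^c = \{{\mathcal M}f \le 2^j\}$ shrinks to a null set as $j \to -\infty$ (again using ${\mathcal M}f < \infty$ a.e., in fact ${\mathcal M}f > 0$ a.e. unless $f = 0$), so dominated convergence applies on $[-1,1]^n$. For the first term, the factor $2^j \to 0$, so I only need the sum
\[
\left(
\sum_{l=1}^\infty
\left(
2^{-\frac{n\theta}{p}}
\left\|
 \chi_{{\mathcal O}_j}
\right\|_{L^{p}([-2^l,2^l]^n)}
\right)^{\frac{1}{\kappa}}\right)^{\kappa}
\]
to stay bounded uniformly in $j$, which follows since $\|\chi_{{\mathcal O}_j}\|_{L^p([-2^l,2^l]^n)} \le (2^{l+1})^{n/p}$ and, as above, the resulting $l$-series converges for $\theta$ near $1$. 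Multiplying by $2^j$ kills the whole term as $j \to -\infty$.

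Finally, once both limits are established, the telescoping identity follows formally: from $f = g_j + b_j$ for every $j$ we get $g_{j+1} - g_j = b_j - b_{j+1}$, hence for any $J_1 < J_2$,
\[
\sum_{j=J_1}^{J_2}(g_{j+1}-g_j) = g_{J_2+1} - g_{J_1} = (f - b_{J_2+1}) - (f - b_{J_1}) = b_{J_1} - b_{J_2+1}.
\]
Letting $J_1 \to -\infty$ and $J_2 \to \infty$ and using $g_j \to 0$, $b_j \to 0$ in ${\mathcal S}'({\mathbb R}^n)$ gives $\sum_{j=-\infty}^\infty (g_{j+1}-g_j) = f$ in ${\mathcal S}'({\mathbb R}^n)$, as claimed. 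The main obstacle I anticipate is the uniform-in-$j$ control of the $l$-series appearing in \eqref{eq:120919-11}--\eqref{eq:120919-12}: one must choose $\theta$ and exploit $\kappa > 1$ together with the ${\mathcal G}_1$-growth of $\phi$ (and \eqref{eq:140327-3}) carefully so that the series converges before invoking dominated convergence to push the limit inside; the pointwise decay of each term is comparatively routine.
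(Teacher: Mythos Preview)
Your treatment of $b_j \to 0$ and of the telescoping conclusion follows the paper's argument essentially verbatim. The gap is in your handling of the first term on the right of \eqref{eq:120919-12} when showing $g_j \to 0$.

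You factor out $2^j$ and then try to bound the remaining $l$-sum using the trivial estimate $\|\chi_{{\mathcal O}_j}\|_{L^p([-2^l,2^l]^n)} \le (2^{l+1})^{n/p}$. With the weight $2^{-n\theta l/p}$ (the exponent must carry the index $l$; the displayed formulas in \eqref{eq:120919-11}--\eqref{eq:120919-12} have a typo here), this yields summands comparable to $2^{(1-\theta)nl/(p\kappa)}$, and the resulting series \emph{diverges} for every $\theta<1$. So the sentence ``the resulting $l$-series converges for $\theta$ near $1$'' is false as written, and you cannot simply multiply a bounded-in-$j$ quantity by $2^j$.

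The correct move, which the paper carries out (somewhat tersely), is \emph{not} to pull $2^j$ out but to use the pointwise bound $2^j\chi_{{\mathcal O}_j} \le {\mathcal M}f$. This gives the $j$-uniform majorant
\[
\|2^j\chi_{{\mathcal O}_j}\|_{L^p([-2^l,2^l]^n)} \le \|{\mathcal M}f\|_{L^p([-2^l,2^l]^n)} \lesssim 2^{ln/p}\phi({\rm o},2^l)\,\|f\|_{H{\mathcal M}_{p,\phi}}.
\]
The extra factor $\phi({\rm o},2^l)$ decays like $2^{-\epsilon l}$ for some $\epsilon>0$: from \eqref{eq:140327-3} one has $F(r)\equiv\int_r^\infty \phi(s)\,ds/s \le C\phi(r)=-CrF'(r)$, hence $F(r)\lesssim r^{-1/C}$ and then $\phi(2r)\le F(r)/\log 2 \lesssim r^{-1/C}$. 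Choosing $\theta$ with $(1-\theta)n/p<\epsilon$ makes $\sum_l (2^{(1-\theta)ln/p}\phi({\rm o},2^l))^{1/\kappa}$ convergent. With this $j$-independent summable majorant in place, dominated convergence in $l$ applies, and each term goes to $0$ as $j\to-\infty$ because $2^j\chi_{{\mathcal O}_j}\to 0$ pointwise and is dominated by ${\mathcal M}f\in L^p_{\rm loc}$. Your argument for the second term $\|{\mathcal M}f\,\chi_{{\mathcal O}_j^c}\|_{L^p([-1,1]^n)}$ is fine; the even shorter observation ${\mathcal M}f \le 2^j$ on ${\mathcal O}_j^c$ also suffices there.
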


\begin{proof}
Let us show that $b_j \to 0$ as $j \to \infty$
in ${\mathcal S}'({\mathbb R}^n)$.
Once this is proved,
then we have
$f=\lim_{j \to \infty}g_j$
in ${\mathcal S}'({\mathbb R}^n)$.
Let us choose a test function $\varphi \in {\mathcal S}({\mathbb R}^n)$.
Then we have
\[
|\langle b_j,\varphi \rangle|
\lesssim\inf_{x \in [-1,1]^n}
{\mathcal M}b_j(x)
\lesssim
\|{\mathcal M}b_j\|_{L^p([-1,1]^n)},
\]
where the implicit constant does depend on $\varphi$.

Assume (\ref{eq:140327-3})
and choose $\theta>0$ so that $\tau<\theta<1$.
Note that
\[
\|f\|_{H{\mathcal M}_{p,\phi}}
\ge
\left\|
 {\mathcal M} f \cdot \chi_{{\mathcal O}_j}
\right\|_{L^{p}([-2^l,2^l]^n)} \to 0
\quad (j \to \infty)
\]
and that
\begin{align*}
\left(
\sum_{l=1}^\infty
\left(
2^{-\frac{n\theta}{p}}
\left\|
 {\mathcal M} f \cdot \chi_{{\mathcal O}_j}
\right\|_{L^{p}([-2^l,2^l]^n)}
\right)^{\frac{1}{\kappa}}\right)^{\kappa}
&\le
\left(
\sum_{l=1}^\infty
\left(
\varphi(2^l)2^{\frac{n(1-\theta)}{p}}\|f\|_{H{\mathcal M}_{p,\phi}}
\right)^{\frac{1}{\kappa}}\right)^{\kappa}\\
&=C_0\|f\|_{H{\mathcal M}_{p,\phi}}.
\end{align*}

Hence it follows from (\ref{eq:120919-11}) that
$\langle b_j,\varphi \rangle \to 0$
as $j \to \infty$.
Likewise by using (\ref{eq:120919-12}),
we obtain
\[
|\langle g_j,\varphi \rangle|
\lesssim
\left(
\sum_{l=1}^\infty
\left(
2^{-\frac{n\theta}{p}}
\left\|2^j \cdot \chi_{{\mathcal O}_j}+
 {\mathcal M} f \cdot \chi_{({\mathcal O}_j)^c}
\right\|_{L^{p}([-2^l,2^l]^n)}
\right)^{\frac{1}{\kappa}}\right)^{\kappa}.
\]
Hence, $g_j \to 0$ as $j \to -\infty$ by the Lebesgue convergence theorem.
Consequently,
it follows that
$$f=\lim_{j \to \infty}g_j=\lim_{j,k \to \infty}\sum_{l=-k}^j(g_{l+1}-g_l)$$
in ${\mathcal S}'({\mathbb R}^n)$.
\end{proof}

We prove Theorem \ref{thm:2} when $f \in L^1_{\rm loc}({\mathbb R}^n)$.
\begin{proof}[Proof of Theorem \ref{thm:2} when $f \in L^1_{\rm loc}({\mathbb R}^n)$]
For each $j\in{\mathbb Z}$,
consider the level set
\begin{equation}\label{eq:O-j}
{\mathcal O}_j\equiv \{x \in {\mathbb R}^n :{\mathcal M}f(x)>2^j\}.
\end{equation}
Then it follows immediately from the definition that
\begin{equation}\label{eq:O-j+1}
{\mathcal O}_{j+1}\subset{\mathcal O}_j.
\end{equation}
If we invoke Lemma~\ref{lem:CZd},
then $f$ can be decomposed;
$$
 f=g_j+b_j, \quad b_j=\sum_kb_{j,k}, \quad b_{j,k}=(f-c_{j,k})\eta_{j,k},
$$
where each $b_{j,k}$ is supported in a cube $Q_{j,k}$
as is described in Lemma \ref{lem:CZd}.

We know that
\begin{equation}
 f=\sum_{j=-\infty}^\infty (g_{j+1}-g_j),
\end{equation}
with the sum converging in the sense of distributions
from Lemma \ref{lem:131127-1}.
Here,
going through the same argument as the one in \cite[pp. 108--109]{Stein1993},
we have an expression;
\begin{equation}\label{fgeA}
 f=\sum_{j,k}A_{j,k},
\quad g_{j+1}-g_j=\sum_k A_{j,k} \quad (j \in {\mathbb Z})
\end{equation}
in the sense of distributions,
where each $A_{j,k}$,
supported in $Q_{j,k}$,
satisfies the pointwise estimate
$|A_{j,k}(x)|\le C_02^j$
for some universal constant $C_0$
and the moment condition
$\displaystyle \int_{{\mathbb R}^n} A_{j,k}(x)q(x)\,dx=0$
for every $q\in{\mathcal P}_d({\mathbb R}^n)$.
With these observations in mind,
let us set
$$
 a_{j,k}\equiv \frac{A_{j,k}}{C_02^j},
 \quad
 \kappa_{j,k}\equiv C_02^j.
$$
Then we automatically obtain
that each $a_{j,k}$ satisfies
\[
|a_{j,k}| \le \chi_{Q_{j,k}}, \quad
\int_{{\mathbb R}^n}x^\alpha a_{j,k}(x)\,dx=0
\quad (|\alpha| \le L)
\]
and that $\displaystyle f=\sum_{j,k}\kappa_{j,k}a_{j,k}$
in the topology of $H{\mathcal M}_{p,\phi}({\mathbb R}^n)$,
once we prove the estimate of coefficients.
Rearrange $\{a_{j,k}\}$ and so on
to obtain $\{a_j\}$ and so on.

To establish (\ref{eq:thm2-1})
we need to estimate
\begin{align*}
\alpha
\equiv
\left\|\left(
\sum_{j=-\infty}^\infty |\lambda_j \chi_{Q_j}|^v
\right)^{1/v}\right\|_{{\mathcal M}_{p,\phi}}.
\end{align*}
Since
$\{(\kappa_{j,k};Q_{j,k})\}_{j,k}=
\{(\lambda_j;Q_j)\}_{j}$
as a set,
we have
\begin{align*}
\alpha=
\left\|\left(
\sum_{j=-\infty}^\infty
\sum_{k \in K_j} |\kappa_{j,k} \chi_{Q_{j,k}}|^v
\right)^{1/v}\right\|_{{\mathcal M}_{p,\phi}}.
\end{align*}
If we insert the definition
of $\kappa_j$,
then we have
\[
\alpha
=C_0
\left\|\left(
\sum_{j=-\infty}^\infty
\sum_{k \in K_j} |2^j \chi_{Q_{j,k}}|^v
\right)^{1/v}\right\|_{{\mathcal M}_{p,\phi}}
=C_0
\left\|\left(
\sum_{j=-\infty}^\infty
2^{jv}\sum_{k \in K_j} \chi_{Q_{j,k}}
\right)^{1/v}\right\|_{{\mathcal M}_{p,\phi}}.
\]
Observe that (\ref{eq:120922-1})
together with the bounded overlapping property
yields
\[
\chi_{{\mathcal O}_j}(x)
\le
\sum_{k \in K_j}
\chi_{Q_{j,k}}(x)
\lesssim
\chi_{{\mathcal O}_j}(x)
\quad (x \in {\mathbb R}^n).
\]
Thus, we have
\begin{align*}
\alpha
&\lesssim
\left\|\left(\sum_{j=-\infty}^\infty
 \left(2^j\chi_{{\mathcal O}_j}\right)^{v}
 \right)^{1/v}\right\|_{{\mathcal M}_{p,\phi}}.
\end{align*}
Recall that ${\mathcal O}_j \supset {\mathcal O}_{j+1}$
for each $j \in {\mathbb Z}$.
Consequently we have
\[
\sum_{j=-\infty}^\infty
 \left(2^j\chi_{{\mathcal O}_j}(x)\right)^{v}
\sim
 \left(
\sum_{j=-\infty}^\infty 2^j\chi_{{\mathcal O}_j}(x)\right)^{v}
\sim
 \left(
\sum_{j=-\infty}^\infty
2^j\chi_{{\mathcal O}_j \setminus {\mathcal O}_{j+1}}(x)
\right)^{v}
\quad (x \in {\mathbb R}^n).
\]
Thus,
we obtain
\begin{align*}
\alpha
\lesssim
 \left\|
\sum_{j=-\infty}^\infty
2^j\chi_{{\mathcal O}_j \setminus {\mathcal O}_{j+1}}
\right\|_{{\mathcal M}_{p,\phi}}.
\end{align*}

It follows from the definition of ${\mathcal O}_j$
that
we have $2^j<{\mathcal M}f(x)$ for all $x \in {\mathcal O}_j$.
Hence, we have
\begin{align*}
\alpha
\lesssim
 \left\|
\sum_{j=-\infty}^\infty
\chi_{{\mathcal O}_j \setminus {\mathcal O}_{j+1}}
{\mathcal M}f
\right\|_{{\mathcal M}_{p,\phi}}
=\|{\mathcal M}f\|_{{\mathcal M}_{p,\phi}}=\|f\|_{H{\mathcal M}_{p,\phi}}.
\end{align*}
This is the desired result.
\end{proof}

\begin{proof}[Proof of Theorem \ref{thm:2} for general cases]
According to (\ref{eq:120919-13}),
$g_j$ is a locally integrable function and
it satisfies
$\|g_j\|_{H{\mathcal M}_{p,\phi}}
\lesssim
\|f\|_{H{\mathcal M}_{p,\phi}}$.
Therefore, applying the above paragraph,
we see that each $g_j$ has a decomposition;
there exist
a collection $\{Q_{l,j}\}_{l=1}^\infty$ of cubes,
$\{a_{l,j}\}_{l=1}^\infty \subset L^\infty({\mathbb R}^n)$,
and
$\{\lambda_{l,j}\}_{l=1}^\infty \subset [0,\infty)$
such that
\begin{equation}\label{eq:150302-1}
g_j=\sum_{l=1}^\infty \lambda_{l,j}a_{l,j}
\end{equation}
unconditionally in ${\mathcal S}'({\mathbb R}^n)$,
that
$|a_{l,j}| \le \chi_{Q_{l,j}}$,
that
\[
\int_{{\mathbb R}^n}a_{Q,j}(x)x^\alpha\,dx=0
\]
for all $|\alpha| \le L$
and that
\begin{equation}\label{eq:150302-2}
\left\|\left(\sum_{l=1}^\infty
(\lambda_{j,l}\chi_{Q_{j,l}})^{v}
\right)^{1/v}\right\|_{{\mathcal M}_{p,\phi}}
\le C_v\|f\|_{H{\mathcal M}_{p,\phi}}.
\end{equation}
We may assume that each $Q_{j,l}$
is realized as $3Q$ for some dyadic cube $Q$.
Since $v \le 1$, by using $a^v+b^v \ge (a+b)^v$ for $a,b \ge 0$
and taking into account the case when $Q_{j,l}=Q_{j',l'}$
for some $(j.l) \ne (j',l')$,
we have a decomposition
there exist
a collection 
$\{Q_{Q,j}\}_{Q \in {\mathcal D}}$ of cubes,
$\{a_{Q,j}\}_{Q \in {\mathcal D}} \subset L^\infty({\mathbb R}^n)$,
and
$\{\lambda_{Q,j}\}_{Q \in {\mathcal D}} \subset [0,\infty)$
such that
\begin{equation}\label{eq:150308-11}
g_j=\sum_{Q \in {\mathcal D}} \lambda_{Q,j}a_{Q,j}
\end{equation}
in ${\mathcal S}'({\mathbb R}^n)$, that
\begin{equation}\label{eq:150302-3}
|a_{Q,j}| \le \chi_{3Q},
\end{equation}
that
\[
\int_{{\mathbb R}^n}a_{Q,j}(x)x^\alpha\,dx=0
\]
for all $|\alpha| \le L$
and that
\begin{equation}\label{eq:150302-4}
\left\|\left(\sum_{Q \in {\mathcal D}}
(\lambda_{Q,l}\chi_{Q})^{v}
\right)^{1/v}\right\|_{{\mathcal M}_{p,\phi}}
\le C_v\|f\|_{H{\mathcal M}_{p,\phi}}.
\end{equation}

Fix $Q \in {\mathcal D}$.
Since
$\{a_{Q,j}\}_{j=1}^\infty$
is a bounded sequence in $L^\infty({\mathbb R}^n)$
from (\ref{eq:150302-3}),
and
$\{\lambda_{Q,j}\}_{j=1}^\infty \subset [0,\infty)$
is a bounded sequence in ${\mathbb R}$
from (\ref{eq:150302-4}),
we can choose subsequences
$\{a_{Q,j_k}\}_{k=1}^\infty$
and
$\{\lambda_{Q,j_k}\}_{k=1}^\infty \subset [0,\infty)$
so that
$\{a_{Q,j_k}\}_{k=1}^\infty$
and
$\{\lambda_{Q,j_k}\}_{k=1}^\infty \subset [0,\infty)$
are convergent to
$a_Q$ and $\lambda_Q$ respectively,
where the convergence of
$\{a_{Q,j_k}\}_{k=1}^\infty$
takes place in the weak-* topology of $L^\infty({\mathbb R}^n)$.

Let us set
\begin{equation}\label{eq:150302-141}
g\equiv \sum_{Q \in {\mathcal D}}\lambda_Q a_Q.
\end{equation}
Then according to Theorem \ref{thm:131108-2},
we have
$g \in H{\mathcal M}_{p,\phi}({\mathbb R}^n)$.
By the Fatou lemma,
we can conclude the proof once we show that
\begin{equation}\label{eq:150302-14}
f=g.
\end{equation}
To this end, we take a test function $\varphi \in {\mathcal S}({\mathbb R}^n)$.
If we insert (\ref{eq:150302-1}) to $g_J$ and use (\ref{eq:120919-11}), we obtain
\[
\langle f,\varphi \rangle
=
\lim_{J \to \infty}
\langle g_J,\varphi \rangle
=
\lim_{J \to \infty}
\sum_{Q \in {\mathcal D}}
\lambda_{Q,J}\langle a_{Q,J},\varphi \rangle.
\]
If we can change the order of
$\displaystyle
\lim_{J \to \infty}
$
and
$\displaystyle
\sum_{Q \in {\mathcal D}}
$
in the most right-hand side of the above formula,
we have
\[
\langle f,\varphi \rangle
=
\lim_{J \to \infty}
\sum_{Q \in {\mathcal D}}
\lambda_{Q,J}\langle a_{Q,J},\varphi \rangle
=
\sum_{Q \in {\mathcal D}}
\lim_{J \to \infty}
\lambda_{Q,J}\langle a_{Q,J},\varphi \rangle
=
\sum_{Q \in {\mathcal D}}
\lambda_{Q}\langle a_{Q},\varphi \rangle
=
\langle g,\varphi \rangle,
\]
showing $f=g$.
Thus, we are left with the task of justifying
the change of the order of
$\displaystyle
\lim_{J \to \infty}
$
and
$\displaystyle
\sum_{Q \in {\mathcal D}}.
$
Let $\varphi^\dagger \in C^\infty_{\rm c}({\mathbb R}^n)$
satisfy $\chi_{B(1)} \le \varphi^\dagger \le \chi_{B(2)}$.
Since $\varphi \in {\mathcal S}({\mathbb R}^n)$,
by decomposing
$\varphi=\varphi\varphi^\dagger(R^{-1}\cdot)+
\varphi(1-\varphi^\dagger(R^{-1}\cdot))$,
and using the fact
that
$H{\mathcal M}_{p,\phi}({\mathbb R}^n)$
(defined via the grand maximal operator)
is continuously embedded in
${\mathcal S}'({\mathbb R}^n)$
as well as Theorem \ref{thm:131108-2},
we see that the contribution
of the function $\varphi(1-\varphi^\dagger(R^{-1}\cdot))$
can be made as small as we wish.
In fact,
\[
\sum_{Q \in {\mathcal D}}
|\lambda_{Q,J}\langle a_{Q,J},\varphi(1-\varphi^\dagger(R^{-1}\cdot)) \rangle|
=
O(R^{-1}),
\]
where the implicit constant do not depend on $J$.
This implies that we can and do assume that $\varphi$ is supported
in a compact set $K$.
Suppose that $K$ is contained in $Q(2^N)$ for some $N>0$.
Let us set
\begin{align*}
{\rm I}
&\equiv
\sup_{J}
\sum_{Q \in {\mathcal D}, Q \cap K\ne \emptyset, \ell(Q) \le 2^{-A}}
|\lambda_{Q,J}\langle a_{Q,J},\varphi \rangle|\\
{\rm II}
&\equiv
\sup_{J}
\sum_{Q \in {\mathcal D}, Q \cap K\ne \emptyset, 2^{-A}<\ell(Q) \le 2^{A}}
|\lambda_{Q,J}\langle a_{Q,J}-a_Q,\varphi \rangle|\\
{\rm III}
&\equiv
\sup_{J}
\sum_{Q \in {\mathcal D}, Q \cap K\ne \emptyset, 2^{A}<\ell(Q)}
|\lambda_{Q,J}\langle a_{Q,J},\varphi \rangle|,
\end{align*}
where $A>N$.
Then we have
\begin{align}\label{eq:150302-13}
\sup_{J}
\sum_{Q \in {\mathcal D}}
|\lambda_{Q,J}\langle a_{Q,J},\varphi \rangle|
\le
2{\rm I}+{\rm II}+2{\rm III}.
\end{align}
For $l \in {\mathbb Z}$,
denote by ${\mathcal D}_l$
the set of all dyadic cubes
$Q$ such that $|Q|=2^{-ln}$.
As for ${\rm I}$,
we use
$
|\langle a_{Q,J},\varphi \rangle|
\lesssim
\ell(Q)^{n+L+1}$
and if $l \ge N$,
\begin{equation}\label{eq:150302-11}
\sum_{Q \in {\mathcal D}_l}
\frac{|\lambda_{Q,J}|}{\phi({\rm o},Q(2^N))}
\le
\frac{2^{ln/p}}{\phi({\rm o},Q(2^N))}
\left\|
\sum_{Q \in {\mathcal D}_l}
\lambda_{Q,J}\chi_Q
\right\|_{L^p(Q(2^N))}
\lesssim
2^{ln/p}\|f\|_{H{\mathcal M}_{p,\phi}}
\end{equation}
and hence
$$
{\rm I}\lesssim
\sum_{Q \in {\mathcal D}, Q \cap K\ne \emptyset, \ell(Q) \le 2^{-A}}
\phi({\rm o},\ell(Q)))\ell(Q)^{n+L+1}
=
O(2^{-A(n+L+1-n/p)}).
$$
As for ${\rm III}$,
we use
\begin{equation}\label{eq:150302-12}
\phi({\rm o},2^A) \to 0 \quad (A \to \infty)
\end{equation}
and $0<p \le 1$ to have
\[
{\rm III}
\lesssim
\sum_{Q \in {\mathcal D}, Q \cap K\ne \emptyset, \ell(Q)>2^{A}}
|\lambda_{Q,J}|
\le
\phi(2^A)
\left\|\sum_{Q \in{\mathcal D}}
|\lambda_{Q,J}|\chi_Q\right\|_{{\mathcal M}_{p,\phi}}
\lesssim
\phi(2^A)
\|f\|_{H{\mathcal M}_{p,\phi}}.
\]
In view of (\ref{eq:150302-11}) and (\ref{eq:150302-12}),
we see that ${\rm I}$ and ${\rm III}$ contribute
little to the sum (\ref{eq:150302-13}).
With this in mind
we use the weak-* convergence to ${\rm II}$
to see (\ref{eq:150302-14}).
\end{proof}

Finally, we state a corollary to conclude this section.
\begin{corollary}
If $\phi \in {\mathcal G}_p$ satisfies $(\ref{eq:140327-3})$,
then
$H{\mathcal M}_{1,\phi}({\mathbb R}^n)$
is embedded into
${\mathcal M}_{1,\phi}({\mathbb R}^n)$.
\end{corollary}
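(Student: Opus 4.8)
The plan is to read the embedding off from the atomic decomposition of Theorem \ref{thm:2}, specialized to $p=1$ and $L=0$. Given $f \in H{\mathcal M}_{1,\phi}({\mathbb R}^n)$, observe that the hypotheses of the corollary — $\phi \in {\mathcal G}_1$ together with $(\ref{eq:140327-3})$ — are precisely those needed to invoke Theorem \ref{thm:2} (the auxiliary condition on $\eta$ there is harmless: it holds, e.g., with $\eta(x,s)=s$, since $\phi$ is decreasing). Theorem \ref{thm:2} then furnishes $\{\lambda_j\}_{j=1}^\infty \subset [0,\infty)$, $\{Q_j\}_{j=1}^\infty \subset {\mathcal Q}({\mathbb R}^n)$ and $\{a_j\}_{j=1}^\infty \subset L^\infty({\mathbb R}^n)$ with $|a_j| \le \chi_{Q_j}$, with $f = \sum_{j=1}^\infty \lambda_j a_j$ in ${\mathcal S}'({\mathbb R}^n)$, and — on taking $v=1$ in $(\ref{eq:thm2-1})$ — with $\left\| \sum_{j=1}^\infty \lambda_j \chi_{Q_j} \right\|_{{\mathcal M}_{1,\phi}} \le C \|f\|_{H{\mathcal M}_{1,\phi}}$.

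Next I would set $G \equiv \sum_{j=1}^\infty \lambda_j \chi_{Q_j}$, so that $G \in {\mathcal M}_{1,\phi}({\mathbb R}^n) \subset L^1_{\rm loc}({\mathbb R}^n)$ and hence $G < \infty$ almost everywhere. Because $|a_j| \le \chi_{Q_j}$, the series $\sum_j \lambda_j a_j$ converges absolutely almost everywhere, and (dominated convergence on each cube) its partial sums $S_N = \sum_{j \le N} \lambda_j a_j$ converge in $L^1_{\rm loc}({\mathbb R}^n)$ to a function $g$ with $|g| \le G$. In particular $g \in {\mathcal M}_{1,\phi}({\mathbb R}^n)$ and $\|g\|_{{\mathcal M}_{1,\phi}} \le \|G\|_{{\mathcal M}_{1,\phi}} \le C\|f\|_{H{\mathcal M}_{1,\phi}}$.

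It remains to identify $g$ with $f$ as tempered distributions — this is the one step I expect to require care, since Theorem \ref{thm:2} only delivers the atomic series in ${\mathcal S}'({\mathbb R}^n)$ and one must reconcile this with the pointwise/$L^1_{\rm loc}$ representation just built. For $\varphi \in {\mathcal S}({\mathbb R}^n)$ each $S_N$ is a compactly supported bounded function, so $\langle S_N, \varphi \rangle = \int S_N \varphi$ and $\left| \langle S_N, \varphi \rangle - \int g \varphi \right| \le \int \bigl(\sum_{j > N} \lambda_j \chi_{Q_j}\bigr) |\varphi|$. The key point is that $G|\varphi| \in L^1({\mathbb R}^n)$; this integrability is exactly the embedding ${\mathcal M}_{1,\phi}({\mathbb R}^n) \hookrightarrow {\mathcal S}'({\mathbb R}^n)$ of Corollary \ref{cor:150308-1}, which one proves by decomposing ${\mathbb R}^n$ into dyadic annuli and using that $\phi$ is decreasing. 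Since moreover $\sum_{j > N} \lambda_j \chi_{Q_j} \downarrow 0$ almost everywhere, dominated convergence yields $\langle S_N, \varphi \rangle \to \langle g, \varphi \rangle$, whereas $\langle S_N, \varphi \rangle \to \langle f, \varphi \rangle$ by the ${\mathcal S}'$-convergence of the decomposition; hence $f = g$. Consequently $f$ is represented by an element of ${\mathcal M}_{1,\phi}({\mathbb R}^n)$ with $\|f\|_{{\mathcal M}_{1,\phi}} = \|g\|_{{\mathcal M}_{1,\phi}} \le C\|f\|_{H{\mathcal M}_{1,\phi}}$, which is the claimed embedding.

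Apart from that identification I anticipate no real difficulty: once Theorem \ref{thm:2} is available the argument amounts to the pointwise bound $|f| \le \sum_j \lambda_j \chi_{Q_j}$ plus routine bookkeeping in ${\mathcal S}'({\mathbb R}^n)$ and $L^1_{\rm loc}({\mathbb R}^n)$, all of which is underwritten by Corollary \ref{cor:150308-1}.
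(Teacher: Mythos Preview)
Your proposal is correct and follows essentially the same approach as the paper: invoke Theorem \ref{thm:2} with $p=v=1$, bound $\sum_j \lambda_j|a_j|$ pointwise by $\sum_j \lambda_j\chi_{Q_j}\in{\mathcal M}_{1,\phi}$, and then identify the pointwise sum with $f$ by testing against Schwartz functions via the ${\mathcal M}_{1,\phi}\hookrightarrow{\mathcal S}'$ embedding of Corollary \ref{cor:150308-1}. Your treatment of the identification step is in fact somewhat more explicit than the paper's, which carries out the dyadic-annuli estimate $\int|\kappa|\sum_j\lambda_j|a_j|<\infty$ directly but leaves the passage from ${\mathcal S}'$-convergence to a.e.\ representation implicit.
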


\begin{proof}
Under the notation of Theorem \ref{thm:2},
we have
\[
\sum_{j=1}^\infty \lambda_j|a_j|
\le
\sum_{j=1}^\infty \lambda_j\chi_{Q_j}.
\]
(\ref{eq:thm2-1}) with $v=1$ guarantees that the right-hand side
belongs to
${\mathcal M}_{1,\phi}({\mathbb R}^n)$.
Hence
$\displaystyle
f(x)=\sum_{j=1}^\infty \lambda_j a_j(x)
$
converges for almost all $x \in {\mathbb R}^n$.
Observe also
\begin{align*}
\lefteqn{
\int_{{\mathbb R}^n}|\kappa(x)|\left(
\sum_{j=1}^\infty \lambda_j |a_j(x)|\right)\,dx
}\\
&\lesssim
\sup_{x \in {\mathbb R}^n}(1+|x|)^{2n/p+1}|\kappa(x)|
\int_{{\mathbb R}^n}
(1+|x|)^{-2n-1}\left(
\sum_{j=1}^\infty \lambda_j |a_j(x)|\right)\,dx\\
&\lesssim
\sup_{x \in {\mathbb R}^n}(1+|x|)^{2n/p+1}|\kappa(x)|
\sum_{j=1}^\infty
(1+j)^{-2n-1}
\int_{|x| \le j}
\sum_{j=1}^\infty \lambda_j |a_j(x)|\,dx\\
&\lesssim
\sup_{x \in {\mathbb R}^n}(1+|x|)^{2n/p+1}|\kappa(x)|
\sum_{j=1}^\infty
\phi(j)(1+j)^{-2n/p-1}
\left\|\sum_{j=1}^\infty
\lambda_j\chi_{Q_j}\right\|_{{\mathcal M}_{1,\phi}}\\
&\lesssim
\sup_{x \in {\mathbb R}^n}(1+|x|)^{2n/p+1}|\kappa(x)|
\left\|\sum_{j=1}^\infty
\lambda_j\chi_{Q_j}\right\|_{{\mathcal M}_{1,\phi}}.
\end{align*}
Thus, $f$ is represented by an $L^1_{\rm loc}({\mathbb R}^n)$-functions
and satisfies
$\displaystyle
f(x)=\sum_{j=1}^\infty \lambda_j a_j(x)
$
for almost all $x \in {\mathbb R}^n$.
\end{proof}

\subsection{Applications to the boundedness of the singular integral operators}

Going through the same argument
as \cite[Theorem 5.5]{NaSa2012} and \cite[Theorem 5.5]{NaSa-pre2013},
we can prove the following theorem;
\begin{theorem}\label{t5.4}
Let $\phi$ satisfy
$(\ref{eq:140327-3})$.
Let $k \in \mathcal{S}({\mathbb R}^n)$.
Write
\[
A_m
\equiv
\sup_{x \in {\mathbb R}^n}|x|^{n+m}|\nabla^m k(x)|
\quad (m \in {\mathbb N} \cup \{0\}).
\]
Define a convolution operator $T$ by
\[
T f(x) \equiv k*f(x) \quad (f \in {\mathcal S}'({\mathbb R}^n)).
\]
Then, $T$, restricted to $H{\mathcal M}_{p,\phi}({\mathbb R}^n)$,
is an
$H{\mathcal M}_{p,\phi}({\mathbb R}^n)$-bounded operator
and the norm depends only on $\|\mathcal{F}k\|_{L^\infty}$
and a finite number of collections $A_1,A_2,\ldots,A_N$
with $N$ depending only on $\phi$.
\end{theorem}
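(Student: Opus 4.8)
The plan is to reduce to atoms by Theorem \ref{thm:2}, to show that $T$ maps an atom to a uniform multiple of a molecule, and then to reassemble exactly by the bookkeeping already carried out in the proof of Theorem \ref{thm:131108-2}. First I would fix $L\in{\mathbb N}\cup\{0\}$ large enough that $L\ge d_p$ and that $p\kappa>1$, where $\kappa\equiv(n+L+1)/n$, and apply Theorem \ref{thm:2}: for $f\in H{\mathcal M}_{p,\phi}({\mathbb R}^n)$ there are $\{Q_j\}_j\subset{\mathcal Q}({\mathbb R}^n)$, $\{a_j\}_j\subset L^\infty({\mathbb R}^n)$ and $\{\lambda_j\}_j\subset[0,\infty)$ with $|a_j|\le\chi_{Q_j}$, $\int_{{\mathbb R}^n}x^\alpha a_j(x)\,dx=0$ for $|\alpha|\le L$, $f=\sum_j\lambda_j a_j$ in ${\mathcal S}'({\mathbb R}^n)$, and $\bigl\|\bigl(\sum_j(\lambda_j\chi_{Q_j})^v\bigr)^{1/v}\bigr\|_{{\mathcal M}_{p,\phi}}\le C_v\|f\|_{H{\mathcal M}_{p,\phi}}$ for every $v>0$. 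Since convolution with the Schwartz function $k$ is continuous on ${\mathcal S}'({\mathbb R}^n)$, we obtain $Tf=\sum_j\lambda_j\,Ta_j$ in ${\mathcal S}'({\mathbb R}^n)$.

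The crux is the molecular estimate
\begin{equation*}
{\mathcal M}(Ta_j)(x)\ \lesssim\ C_k\Bigl[\chi_{3Q_j}(x)\,Ma_j(x)+\bigl(M\chi_{3Q_j}(x)\bigr)^{\kappa}\Bigr]\qquad(x\in{\mathbb R}^n),
\end{equation*}
where $C_k$ is controlled by $\|{\mathcal F}k\|_{L^\infty}$ together with a finite number of the $A_m$, with the number depending only on $L$ and $n$. The contribution of $x\in 3Q_j$ is handled by $\|(\psi_t*k)*a_j\|_{L^\infty}\lesssim\|\psi_t*k\|_{L^1}\|a_j\|_{L^\infty}$, uniformly in $t>0$ and $\psi\in{\mathcal F}_N$. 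For $x\notin 3Q_j$ one writes ${\mathcal M}(Ta_j)(x)=\sup_{t,\psi}|(\psi_t*k)*a_j(x)|$ and exploits the cancellation of $a_j$ by subtracting from the kernel $(\psi_t*k)(x-\cdot)$ its degree-$L$ Taylor polynomial at $c(Q_j)$; the remainder is $\lesssim\ell(Q_j)^{n+L+1}\sup_z|\nabla^{L+1}(\psi_t*k)(x-z)|$ with $z$ near $Q_j$, and, placing the $L+1$ derivatives on $k$ when $t$ is small and on $\psi_t$ when $t$ is large, the Schwartz decay $|\nabla^m k(w)|\lesssim A_m(1+|w|)^{-n-m}$ turns this into a bound of order $|x-c(Q_j)|^{-(n+L+1)}\lesssim(M\chi_{3Q_j}(x))^{\kappa}$. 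This is precisely the kernel estimate used in \cite[Theorem 5.5]{NaSa2012} and \cite[Theorem 5.5]{NaSa-pre2013}; the only additional point is to track that every quantity occurring is controlled by $\|{\mathcal F}k\|_{L^\infty}$ and $A_1,\dots,A_N$. I expect the delicate range $t\sim|x-c(Q_j)|$, where one must pit the moments of $a_j$ against the smoothness of the averaged kernel $\psi_t*k$ rather than of $k$ itself, to be the main technical obstacle.

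Granting the molecular estimate, since ${\mathcal M}$ is sublinear and $0<p\le1$,
\begin{equation*}
{\mathcal M}(Tf)(x)\le\Bigl(\sum_j\lambda_j^p\,{\mathcal M}(Ta_j)(x)^p\Bigr)^{1/p}\lesssim C_k\Bigl(\sum_j(\lambda_j\chi_{3Q_j}(x)Ma_j(x))^p\Bigr)^{1/p}+C_k\Bigl(\sum_j\lambda_j^p\,(M\chi_{3Q_j}(x))^{p\kappa}\Bigr)^{1/p}.
\end{equation*}
I would then estimate the ${\mathcal M}_{p,\phi}$-norm of the two terms along the lines of the proof of Theorem \ref{thm:131108-2}. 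For the first term, $\chi_{3Q_j}Ma_j$ is supported in $3Q_j$ and, by (\ref{eq:140327-1113}), $\|\chi_{3Q_j}Ma_j\|_{{\mathcal M}_{q,\eta}}\lesssim\eta(Q_j)^{-1}$; Theorem \ref{lem:131108-2}, applicable by (\ref{eq:140327-3}) at the level of the exponent $1$ and the weight $\phi^p$, bounds this term by $\bigl\|\bigl(\sum_j(\lambda_j\chi_{Q_j})^p\bigr)^{1/p}\bigr\|_{{\mathcal M}_{p,\phi}}$. For the second term, raising the $\kappa$-th power to the inside exactly as in the proof of Theorem \ref{thm:131108-2} reduces matters to the Fefferman--Stein type vector-valued maximal inequality in a Morrey space of exponent $p\kappa>1$, which is Corollary \ref{cor:140327-111} (equivalently Theorem \ref{3.4.VectV}); condition (\ref{eq:140327-3}) for $\phi$ furnishes the corresponding integrability for the dilated weight, and the outcome is again $\lesssim\bigl\|\bigl(\sum_j(\lambda_j\chi_{Q_j})^p\bigr)^{1/p}\bigr\|_{{\mathcal M}_{p,\phi}}$. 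Combining the two bounds with the coefficient estimate of Theorem \ref{thm:2} for $v=p$ yields $\|Tf\|_{H{\mathcal M}_{p,\phi}}=\|{\mathcal M}(Tf)\|_{{\mathcal M}_{p,\phi}}\lesssim C_k\,\|f\|_{H{\mathcal M}_{p,\phi}}$, with $C_k$ depending only on $\|{\mathcal F}k\|_{L^\infty}$, finitely many of the $A_m$, and $\phi$ (through $N$ and the implicit constant in (\ref{eq:140327-3})), which is the assertion.
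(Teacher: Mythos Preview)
Your strategy---decompose $f$ by Theorem~\ref{thm:2}, obtain a molecular bound for ${\mathcal M}(Ta_j)$, and reassemble via Theorem~\ref{lem:131108-2} and Corollary~\ref{cor:140327-111} exactly as in the proof of Theorem~\ref{thm:131108-2}---is precisely the argument the paper imports from \cite[Theorem~5.5]{NaSa2012} and \cite[Theorem~5.5]{NaSa-pre2013}. The far-part decay via Taylor expansion and the moments of $a_j$, together with the reassembly, are correctly outlined.

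There is, however, a genuine gap in the near-part constant tracking. Your bound on $3Q_j$ rests on $\|(\psi_t*k)*a_j\|_{L^\infty}\le\|\psi_t*k\|_{L^1}\|a_j\|_{L^\infty}\le\|\psi\|_{L^1}\|k\|_{L^1}$, and $\|k\|_{L^1}$ is \emph{not} controlled by $\|{\mathcal F}k\|_{L^\infty}$ together with finitely many $A_m$: the quantities $A_m=\sup_x|x|^{n+m}|\nabla^m k(x)|$ give no information on $k$ near the origin, and $\|{\mathcal F}k\|_{L^\infty}$ does not dominate $\|k\|_{L^1}$. In fact a \emph{pointwise} bound ${\mathcal M}(Ta_j)(x)\lesssim C_k$ on $3Q_j$, with $C_k$ depending only on the stated quantities, is false in general. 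The remedy, and what the cited references actually do, is to replace the pointwise local estimate by an $L^2$ one: since ${\mathcal M}g\lesssim Mg$ for $g\in L^1_{\rm loc}$, one has
\[
\|\chi_{3Q_j}\,{\mathcal M}(Ta_j)\|_{L^2}\lesssim\|M(Ta_j)\|_{L^2}\lesssim\|Ta_j\|_{L^2}\le\|{\mathcal F}k\|_{L^\infty}\,|Q_j|^{1/2},
\]
so that $(\chi_{3Q_j}{\mathcal M}(Ta_j))^p$ is supported in $3Q_j$ and carries the size condition required by Theorem~\ref{lem:131108-2} for a suitable auxiliary $\eta$. With this modification the argument goes through with constants depending only on $\|{\mathcal F}k\|_{L^\infty}$ and $A_1,\dots,A_N$, as asserted.
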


Once Theorem \ref{t5.4} is proved,
we can obtain the Littlewood-Paley decomposition
in the same way
as \cite[Theorem 5.7]{NaSa2012} and \cite[Theorem 5.10]{NaSa-pre2013}.
\begin{theorem}\label{thm:LP}
Let $0<p \le 1$.
Let $\phi \in {\mathcal G}_p$ satisfy
$(\ref{eq:140327-3})$.
Let $\varphi \in {\mathcal S}({\mathbb R}^n)$ be a function
which is supported on $B(0,4) \setminus B(0,1/4)$
and satisfies
\[
\sum_{j=-\infty}^\infty|\varphi_j(\xi)|^2>0
\]
for $\xi \in {\mathbb R}^n \setminus \{0\}$.
Then the following norm is an equivalent norm
of $H{\mathcal M}_{p,\phi}({\mathbb R}^n)${\rm:}
\begin{equation}
\|f\|_{\dot{\cal E}^0_{p,\phi,2}}
\equiv
\left\|
\left(
\sum_{j=-\infty}^\infty |\varphi_j(D)f|^2
\right)^{1/2}\right\|_{{\mathcal M}_{p,\phi}},
\quad f \in {\mathcal S}'({\mathbb R}^n).
\end{equation}
\end{theorem}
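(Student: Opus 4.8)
The plan is to prove the equivalence as a two-sided estimate, following the $\varphi$-transform scheme behind \cite[Theorem 5.7]{NaSa2012} and \cite[Theorem 5.10]{NaSa-pre2013}, with Theorem \ref{t5.4} as the scalar prototype and the vector-valued maximal inequalities of Section \ref{s2} handling the $l_2$-sums. As a preliminary, since $\varphi$ is supported away from the origin and $\sum_j|\varphi_j(\xi)|^2>0$ off the origin, a standard partition-of-unity construction yields $\psi\in{\mathcal S}({\mathbb R}^n)$, supported in an annulus of the same type, with $\sum_{j=-\infty}^\infty\overline{\varphi_j(\xi)}\,\psi_j(\xi)=1$ for $\xi\neq0$. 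Because $H{\mathcal M}_{p,\phi}({\mathbb R}^n)$ is continuously embedded in ${\mathcal S}'({\mathbb R}^n)$ and each $\psi_j(D)\varphi_j(D)f$ is a band-limited Schwartz-class convolution, the Calder\'on reproducing formula $f=\sum_j\psi_j(D)\varphi_j(D)f$ holds in ${\mathcal S}'({\mathbb R}^n)$ --- not merely modulo polynomials --- for $f\in H{\mathcal M}_{p,\phi}({\mathbb R}^n)$, the vanishing of the low-frequency tail being forced by $\|{\mathcal M}f\|_{{\mathcal M}_{p,\phi}}<\infty$.

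For the estimate $\|f\|_{\dot{\mathcal E}^0_{p,\phi,2}}\lesssim\|f\|_{H{\mathcal M}_{p,\phi}}$ I would argue as in the proof of Theorem \ref{thm:131108-2}. Fix $L\ge d_p$ and use Theorem \ref{thm:2} to write $f=\sum_j\lambda_j a_j$ with $|a_j|\le\chi_{Q_j}$, $\int x^\alpha a_j(x)\,dx=0$ for $|\alpha|\le L$, and $\|(\sum_j(\lambda_j\chi_{Q_j})^v)^{1/v}\|_{{\mathcal M}_{p,\phi}}\lesssim\|f\|_{H{\mathcal M}_{p,\phi}}$ for each $v>0$. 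For a single atom $a$ on a cube $Q$, the cancellation of moments together with the fast decay of ${\mathcal F}^{-1}\varphi$, after summing the resulting geometric series in $j$, produces the molecular tail bound $(\sum_j|\varphi_j(D)a(x)|^2)^{1/2}\lesssim M\chi_{3Q}(x)^{(n+L+1)/n}$ on ${\mathbb R}^n\setminus3Q$, while on $3Q$ one controls the ${\mathcal M}_{q,\eta}$-norm of $\chi_{3Q}(\sum_j|\varphi_j(D)a|^2)^{1/2}$ by $1/\eta(Q)$ using the $L^{q'}$-boundedness of the square function for $q'$ sufficiently large (this local piece is only in $L^{q'}_{\rm loc}$, not bounded, unlike the corresponding $Ma_j$ in Theorem \ref{thm:131108-2}, so one must pass to a high exponent to dominate small subcubes, exploiting that $\eta(x,s)=s^{-n/v}$ lies in ${\mathcal G}_v$). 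With these two pieces, $(\sum_j|\varphi_j(D)f|^2)^{1/2}$ admits literally the same two-term majorant that ${\mathcal M}f$ did in the proof of Theorem \ref{thm:131108-2} --- one term summable by Theorem \ref{lem:131108-2} (after the usual $p$-th power reduction to ${\mathcal M}_{1,\phi^p}$), one of the form $(\sum_j\lambda_j^p(M\chi_{3Q_j})^{p\tau})^{1/p}$ with $\tau=(n+L+1)/n$ summable by Corollary \ref{cor:140327-111} on the powered space ${\mathcal M}_{p\tau,\phi^\tau}$ --- and we conclude exactly as there; note that $(\ref{eq:140327-3})$ forces a reverse-doubling of $\phi$ at a fixed scale, which is inherited by $\phi^\tau$ and $\phi^p$, so the hypotheses of Theorem \ref{lem:131108-2} and Corollary \ref{cor:140327-111} are met.

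For the reverse estimate, write $g_j\equiv\varphi_j(D)f$, so that $f=\sum_j\psi_j(D)g_j$ with ${\mathcal F}(\psi_j(D)g_j)$ supported in $\{2^{j-3}\le|\xi|\le2^{j+3}\}$. Testing against $t^{-n}\theta(t^{-1}\cdot)$ for $\theta\in{\mathcal F}_N$, using the spectral localization and a Peetre-type estimate as in $(\ref{ineq07})$, and summing over $j$ by the Cauchy--Schwarz inequality against the fast-decaying kernel factors $\min(2^jt,(2^jt)^{-1})^{N}$, yields, for any fixed $r\in(0,p)$, the pointwise bound
\begin{equation*}
{\mathcal M}f(x)\lesssim\left(\sum_{j=-\infty}^\infty\big(M(|g_j|^r)(x)\big)^{2/r}\right)^{1/2}\qquad(x\in{\mathbb R}^n).
\end{equation*}
Taking the ${\mathcal M}_{p,\phi}$-norm, raising to the power $r$, and reading the right-hand side as $\|\{M(|g_j|^r)\}_j\|_{{\mathcal M}_{p/r,\phi^r}(l_{2/r})}^{1/r}$, I would invoke Theorem \ref{3.4.VectV} (or Corollary \ref{cor:140327-111}) on ${\mathcal M}_{p/r,\phi^r}$ with $l_{2/r}$, which applies because $p/r>1$, $2/r>1$, $\phi^r\in{\mathcal G}_{p/r}$, and $\phi^r$ again inherits $(\ref{eq:140327-3})$ from the reverse-doubling remark above. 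This bounds $\|{\mathcal M}f\|_{{\mathcal M}_{p,\phi}}$ by $\|\{|g_j|^r\}_j\|_{{\mathcal M}_{p/r,\phi^r}(l_{2/r})}^{1/r}=\|\{g_j\}_j\|_{{\mathcal M}_{p,\phi}(l_2)}=\|f\|_{\dot{\mathcal E}^0_{p,\phi,2}}$, which completes the proof.

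The main obstacle I anticipate is the reverse direction: establishing the Peetre-type pointwise domination of ${\mathcal M}f$ by the square-function data and checking its compatibility with the generalized Morrey scale after the passage to $\phi^r$, together with ensuring that the reproducing formula converges in ${\mathcal S}'({\mathbb R}^n)$ rather than modulo polynomials while using only membership in $H{\mathcal M}_{p,\phi}({\mathbb R}^n)$; in the forward direction the delicate point, peculiar to $p\le1$, is that the local square function $\chi_{3Q}(\sum_j|\varphi_j(D)a|^2)^{1/2}$ is unbounded, so the routine but lengthy bookkeeping of the auxiliary exponents $u,v,q,q',r,\tau$ must be arranged so that the higher integrability of the square function combines with the ${\mathcal G}_p$-structure of $\phi$ to make every subcube of $3Q$ contribute correctly.
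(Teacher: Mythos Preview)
Your proposal is correct, and the reverse inequality is argued exactly as the paper intends (Calder\'on reproducing formula, Peetre domination, then the vector-valued maximal bound of Theorem~\ref{3.4.VectV}/Corollary~\ref{jena03} on ${\mathcal M}_{p/r,\phi^r}(l_{2/r})$). The paper gives no proof beyond the sentence pointing to \cite[Theorem~5.7]{NaSa2012} and \cite[Theorem~5.10]{NaSa-pre2013}, so on that half there is nothing to compare.

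For the forward inequality there is a genuine, if minor, difference. The paper's phrasing ``Once Theorem~\ref{t5.4} is proved\ldots'' signals that one should pass through the \emph{scalar} convolution bound: by Khintchine's inequality, $\big(\sum_j|\varphi_j(D)f|^2\big)^{p/2}\sim\mathbb{E}_\varepsilon\big|\sum_j\varepsilon_j\varphi_j(D)f\big|^p$ pointwise, and each randomized multiplier $T_\varepsilon=\sum_j\varepsilon_j\varphi_j(D)$ satisfies the hypotheses of Theorem~\ref{t5.4} with constants independent of $\varepsilon$; since $T_\varepsilon f$ is smooth one has $|T_\varepsilon f|\le{\mathcal M}(T_\varepsilon f)$, and interchanging $\sup_Q$ with $\mathbb{E}_\varepsilon$ gives $\|f\|_{\dot{\mathcal E}^0_{p,\phi,2}}\lesssim\|f\|_{H{\mathcal M}_{p,\phi}}$ directly. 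You instead run the atomic decomposition of Theorem~\ref{thm:2} against the square function and repeat the two-term majorant argument of Section~3.1 with $Ma_j$ replaced by $\big(\sum_k|\varphi_k(D)a_j|^2\big)^{1/2}$. That is equally valid, but it buys you the ``delicate local piece'' you flag at the end: the square function of an $L^\infty$-atom is only in $L^{q'}$ for $q'<\infty$, so you must tune the auxiliary $\eta=s^{-n/v}$ so that $v\le q'$ while still meeting the integral condition (\ref{eq:131126-311}) in the $p$-th-powered form. The Khintchine route sidesteps this bookkeeping entirely, because Theorem~\ref{t5.4} has already absorbed the atomic analysis (including the local term via (\ref{eq:140327-1113a})). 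Either way the hypotheses used are exactly $\phi\in{\mathcal G}_p$ and (\ref{eq:140327-3}), so both arguments match the statement.
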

Once we obtain Theorem \ref{thm:LP},
we can prove the wavelet decomposition
and the atomic decomposition
as in \cite{s08-2,SaTa2007}.

Finally, we point out a mistake in our earlier paper \cite{IST14}.
\begin{remark}
The function $A_{j,k}$ in \cite[p. 162]{IST14} is not in $L^\infty({\mathbb R}^n)$
unless $f \in L^1_{\rm loc}({\mathbb R}^n)$.
Thus, the proof of \cite[Theorem 1.3]{IST14} is valid
only of $f \in H{\mathcal M}^p_q({\mathbb R}^n) \cap L^1({\mathbb R}^n)$,
where $H{\mathcal M}^p_q({\mathbb R}^n)=H{\mathcal M}_{q,\phi}({\mathbb R}^n)$
with $\phi(t)=t^{-n/p}$.
The gap will be closed by the technique described above.
\end{remark}

\section{Applications to the Olsen inequality}
\label{s5}

This is a bilinear estimate of $I_\alpha$,
which is nowadays called the Olsen inequality
\cite{Olsen95}.
Recall that
we define the fractional integral operator $I_{\alpha}$
with $0<\alpha<n$
by;
\begin{align*}
I_{\alpha}f(x)=\int_{\mathbb{R}^n} \frac{f(y)}{|x-y|^{n-\alpha}} \ dy
\end{align*}
for all suitable functions $f$ on $\R^n$.
Olsen's inequality is the inequality
of the form
\[
\|g \cdot I_\alpha f\|_Z
\lesssim
\|f\|_X
\|g\|_Y,
\]
where $X,Y,Z$ are suitable quasi-Banash spaces.
There is a vast amount of literatures
on Olsen inequalities;
see
\cite{EGU,SST12-2,SaSuTa11-1,SaSuTa11-2,SSG10,Sugano11,SuTa03,Tanaka10,UNW12-1}
for theoretical aspects
and
\cite{GRST14,GST12,GST13}
for applications to PDEs.

Here we will prove the following theorem.
\begin{theorem}
Let $0<p \le 1$ and $0<\alpha<n$ and define $q$ by:
\[
\frac{1}{p}-\frac{1}{q}=\frac{\alpha}{n-\lambda}.
\]
Then
\[
\|I_\alpha f\|_{H{\mathcal M}_{q,\lambda}}
\lesssim
\|f\|_{H{\mathcal M}_{p,\lambda}}
\]
for all $f \in H{\mathcal M}_{p,\lambda}$. In particular, if $q > 1$,
then
\[
\|I_\alpha f\|_{{\mathcal M}_{q,\lambda}}
\lesssim
\|f\|_{H{\mathcal M}_{p,\lambda}}
\]
for all $f \in H{\mathcal M}_{p,\lambda}$.
\end{theorem}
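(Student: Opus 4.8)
The plan is to reduce everything to the atomic decomposition of the source space and the classical mapping properties of $I_\alpha$ on Morrey spaces. Given $f\in H{\mathcal M}_{p,\lambda}({\mathbb R}^n)$, invoke Theorem~\ref{thm:2} with a large moment order $L=L(n,\alpha,q)$ fixed once and for all below: write $f=\sum_j\lambda_j a_j$ in ${\mathcal S}'({\mathbb R}^n)$ with $|a_j|\le\chi_{Q_j}$, $\int x^\gamma a_j(x)\,dx=0$ for $|\gamma|\le L$, and $\left\|(\sum_j(\lambda_j\chi_{Q_j})^{v})^{1/v}\right\|_{{\mathcal M}_{p,\lambda}}\le C_v\|f\|_{H{\mathcal M}_{p,\lambda}}$ for every $v>0$ (we will use $v=1$ and $v=q$). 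Writing $r_j=\ell(Q_j)$, $x_j=c(Q_j)$, a Taylor expansion of $y\mapsto|x-y|^{-(n-\alpha)}$ of order $L$ about $x_j$ together with the cancellation of $a_j$ gives the molecular bound $|I_\alpha a_j(x)|\lesssim r_j^{\,n+L+1}|x-x_j|^{-(n+L+1-\alpha)}$ for $x\notin 2Q_j$ and $|I_\alpha a_j(x)|\lesssim r_j^{\alpha}$ for $x\in 2Q_j$, so that
\[
\sum_j\lambda_j|I_\alpha a_j(x)|\lesssim\sum_j\lambda_j r_j^{\alpha}\bigl(M\chi_{Q_j}(x)\bigr)^{(n-\alpha)/n}\approx\sum_j\lambda_j I_\alpha\chi_{Q_j}(x)=I_\alpha\Bigl(\sum_j\lambda_j\chi_{Q_j}\Bigr)(x).
\]
The exponent relation $\tfrac1p-\tfrac1q=\tfrac{\alpha}{n-\lambda}$ is exactly what makes $r_j^{\alpha}\|\chi_{Q_j}\|_{{\mathcal M}_{p,\lambda}}\approx r_j^{\alpha}r_j^{(n-\lambda)/p}=r_j^{(n-\lambda)/q}\approx\|\chi_{Q_j}\|_{{\mathcal M}_{q,\lambda}}$, i.e.\ the correct scaling for $I_\alpha\colon{\mathcal M}_{p,\lambda}\to{\mathcal M}_{q,\lambda}$.

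For the case $q>1$, which already covers the second (``in particular'') assertion, this essentially finishes: by the displayed domination and the Adams boundedness $I_\alpha\colon{\mathcal M}_{p,\lambda}\to{\mathcal M}_{q,\lambda}$ one gets $\sum_j\lambda_j|I_\alpha a_j|\in{\mathcal M}_{q,\lambda}({\mathbb R}^n)$ with norm $\lesssim\left\|\sum_j\lambda_j\chi_{Q_j}\right\|_{{\mathcal M}_{p,\lambda}}\lesssim\|f\|_{H{\mathcal M}_{p,\lambda}}$ (Theorem~\ref{thm:2} with $v=1$); dominated convergence then identifies $I_\alpha f=\sum_j\lambda_j I_\alpha a_j$ in ${\mathcal S}'({\mathbb R}^n)$ and in ${\mathcal M}_{q,\lambda}({\mathbb R}^n)$, and since $H{\mathcal M}_{q,\lambda}={\mathcal M}_{q,\lambda}$ with equivalent norms for $q>1$ (see Section~\ref{s2}), the first displayed inequality of the theorem follows as well.

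When $q\le1$ the target is a genuine Hardy--Morrey space and one must produce atoms. A Fubini/regularisation computation (integrate $x^\gamma|x-y|^{-(n-\alpha)}$ over $\{|x|\le R\}$, let $R\to\infty$, use the moments of $a_j$) shows $\int x^\gamma I_\alpha a_j(x)\,dx=0$ for $|\gamma|\le L-\lceil\alpha\rceil$, which exceeds $d_q=n/q-n$ once $L$ is large; thus $r_j^{-\alpha}I_\alpha a_j$ is a molecule for $H{\mathcal M}_{q,\lambda}$ and the usual molecule-to-atom splitting over the annuli $2^{k+1}Q_j\setminus2^kQ_j$ gives $r_j^{-\alpha}I_\alpha a_j=\sum_k c_0 2^{-k\delta}b_{j,k}$ with $|b_{j,k}|\le\chi_{2^kQ_j}$, $\int x^\gamma b_{j,k}=0$ for $|\gamma|\le d_q$, and $\delta$ as large as desired (again by enlarging $L$; in particular $\delta\ge n-\alpha$). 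The $b_{j,k}$ satisfy the atom hypotheses of Theorem~\ref{thm:131108-2} with $\phi$ the function of ${\mathcal M}_{q,\lambda}$ and $\eta(t)=t^{(\lambda-n)/Q_0}$ for a fixed finite $Q_0>\max(1,q)$ (one checks $\eta\in{\mathcal G}_{Q_0}$, $\|b_{j,k}\|_{{\mathcal M}_{Q_0,\eta}}\lesssim\eta(2^kQ_j)^{-1}$, and $\int_r^\infty\phi(s)(\eta(s)s)^{-1}\,ds\lesssim\phi(r)/\eta(r)$, the last because the exponent of $\phi/\eta$ equals $(\lambda-n)(\tfrac1q-\tfrac1{Q_0})<0$). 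Hence Theorem~\ref{thm:131108-2} (applied with its Morrey parameter equal to $q$) gives $I_\alpha f\in H{\mathcal M}_{q,\lambda}$ with $\|I_\alpha f\|_{H{\mathcal M}_{q,\lambda}}\lesssim\left\|(\sum_{j,k}(\lambda_j r_j^{\alpha}2^{-k\delta}\chi_{2^kQ_j})^{q})^{1/q}\right\|_{{\mathcal M}_{q,\lambda}}$; summing the geometric $k$-series (on $\{|x-x_j|\sim2^mr_j\}$ it is $\approx2^{-m\delta}\approx(M\chi_{Q_j}(x))^{\delta/n}$) and using $\delta\ge n-\alpha$, the right-hand side is $\lesssim\left\|(\sum_j(\lambda_j I_\alpha\chi_{Q_j})^{q})^{1/q}\right\|_{{\mathcal M}_{q,\lambda}}$.

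What remains --- and is the hard part --- is the vector-valued Olsen/Hardy--Littlewood--Sobolev inequality
\[
\left\|\left(\sum_j(\lambda_j I_\alpha\chi_{Q_j})^{q}\right)^{1/q}\right\|_{{\mathcal M}_{q,\lambda}}\lesssim\left\|\left(\sum_j(\lambda_j\chi_{Q_j})^{q}\right)^{1/q}\right\|_{{\mathcal M}_{p,\lambda}},
\]
after which Theorem~\ref{thm:2} (with $v=q$) closes the proof. Since $q\le1$, the power trick only rewrites this as $\|\sum_j(\lambda_j I_\alpha\chi_{Q_j})^{q}\|_{{\mathcal M}_{1,\lambda}}\lesssim\|\sum_j\lambda_j^{q}\chi_{Q_j}\|_{{\mathcal M}_{p/q,\lambda}}$, which is \emph{not} a scalar $I_\alpha$ bound, because $(I_\alpha\chi_Q)^{q}$ is comparable to no $I_\beta\chi_Q$ when $q<1$; this genuinely $\ell^{q}$-valued estimate has to be proved by hand, following the Olsen-inequality literature cited in this section, and it is precisely the place where the flexibility built into Theorem~\ref{thm:131108-2} is needed. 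I expect this estimate, together with the routine but lengthy verification of the molecule properties and of the admissibility of $Q_0$, to be the bulk of the work; the ${\mathcal S}'({\mathbb R}^n)$-convergence points are handled by the diagonal argument already used to prove Theorem~\ref{thm:2}.
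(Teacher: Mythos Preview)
Your approach differs substantially from the paper's: the paper does not use atoms here at all, but simply invokes the Littlewood--Paley characterization of $H\mathcal{M}_{p,\phi}$ (Theorem~\ref{thm:LP}) and refers to the argument of \cite{SaSuTa09-2}, where the action of $I_\alpha$ on the frequency-localized pieces $\varphi_j(D)f$ is essentially multiplication by $\approx 2^{-j\alpha}$ and the square-function norms transform accordingly.

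Your atomic route is in principle a reasonable alternative, but the argument for $q>1$ contains a genuine gap. You appeal to ``the Adams boundedness $I_\alpha\colon\mathcal{M}_{p,\lambda}\to\mathcal{M}_{q,\lambda}$'' applied to $g=\sum_j\lambda_j\chi_{Q_j}\in\mathcal{M}_{p,\lambda}$. But here $0<p\le 1$, and Adams' theorem requires the source exponent to exceed $1$; the failure of $I_\alpha$ on $\mathcal{M}_{p,\lambda}$ for $p\le 1$ is exactly why one introduces $H\mathcal{M}_{p,\lambda}$ in the first place. Your pointwise domination $\sum_j\lambda_j|I_\alpha a_j|\lesssim I_\alpha\bigl(\sum_j\lambda_j\chi_{Q_j}\bigr)$ is correct but throws away all the extra decay $|x-x_j|^{-(n+L+1-\alpha)}$ produced by the moment conditions on $a_j$, leaving an object that is only in $\mathcal{M}_{p,\lambda}$ with $p\le 1$ and need not even have $I_\alpha g$ locally integrable at the right order. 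To salvage the atomic approach in this range you would have to keep the full molecular decay and run the annular splitting you already outline for $q\le 1$, then close with a vector-valued maximal inequality on $\mathcal{M}_{q,\lambda}$ (Corollary~\ref{cor:140327-111}) rather than with Adams.

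For $q\le 1$ you yourself identify the vector-valued Hardy--Littlewood--Sobolev estimate as ``the hard part'' and leave it unproved; since that inequality is the substance of the theorem in this range, the proposal is incomplete there as well. The paper's Littlewood--Paley route sidesteps both of these difficulties simultaneously.
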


\begin{proof}
Argue as we did in \cite{SaSuTa09-2}
by using Theorem \ref{thm:LP}.
\end{proof}

\begin{theorem}
Let $0<p \le 1$ and $0<\alpha<n$ and define $q$ by:
\[
\frac{1}{p}-\frac{1}{q}=\frac{\alpha}{n-\lambda}.
\]
Assume that $q \ge 1$.
Let $g \in {\mathcal M}_{1,n-\alpha}({\mathbb R}^n)$.
Then then there exists a constant $C>0$ such that
\[
\|g \cdot I_\alpha f\|_{H{\mathcal M}_{p,\lambda}}
\lesssim
\|g\|_{{\mathcal M}_{1,n-\alpha}}
\cdot
\|f\|_{H{\mathcal M}_{p,\lambda}}
\]
for all $f \in H{\mathcal M}_{p,\lambda}$.
\end{theorem}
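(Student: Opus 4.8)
The plan is to combine the atomic decomposition of $H{\mathcal M}_{p,\lambda}({\mathbb R}^n)$ with the behaviour of $I_\alpha$ on atoms, in the spirit of \cite{SaSuTa09-2} and of the corrected version of \cite{IST14}. First I would fix a large integer $L$ (to be chosen at the end in terms of $n,p,\alpha,\lambda$) and apply Theorem \ref{thm:2} to $f$, writing $f=\sum_j\lambda_j a_j$ in ${\mathcal S}'({\mathbb R}^n)$ with $|a_j|\le\chi_{Q_j}$, with $\int_{{\mathbb R}^n}x^\beta a_j(x)\,dx=0$ for all $|\beta|\le L$, and with the coefficient bound
\[
\left\|\left(\sum_j(\lambda_j\chi_{Q_j})^v\right)^{1/v}\right\|_{{\mathcal M}_{p,\lambda}}\le C_v\|f\|_{H{\mathcal M}_{p,\lambda}}
\]
valid for every $v>0$. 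Since $\phi(t)=t^{\lambda/p-n/p}$ lies in ${\mathcal G}_1$ precisely when $n(1-p)\le\lambda\le n$, this is the implicit range, and it is exactly the range in which a function that merely decays rapidly away from a cube, with no exact moment conditions, can still belong to $H{\mathcal M}_{p,\lambda}({\mathbb R}^n)$ with the expected norm; this is what makes the statement plausible. Then $g\cdot I_\alpha f=\sum_j\lambda_j\,(g\cdot I_\alpha a_j)$, and I would reduce the theorem to showing that this series converges in ${\mathcal S}'({\mathbb R}^n)$ (which follows from the embedding $H{\mathcal M}_{p,\lambda}({\mathbb R}^n)\hookrightarrow{\mathcal S}'({\mathbb R}^n)$ together with the estimates below) and that each $g\cdot I_\alpha a_j$ is, up to the constant $C\|g\|_{{\mathcal M}_{1,n-\alpha}}$, a molecule of $H{\mathcal M}_{p,\lambda}({\mathbb R}^n)$ adapted to $Q_j$; the conclusion then follows from a molecular version of the synthesis Theorem \ref{thm:131108-2} applied to the doubly indexed family, combined with the coefficient estimate.

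The molecule estimate itself is a short computation. Fix $j$, put $Q=Q_j$, $\ell=\ell(Q)$, $c=c(Q)$, and set $A_0=2Q$ and $A_k=2^{k+1}Q\setminus2^kQ$ for $k\ge1$. From $|a_j|\le\chi_Q$ one gets $|I_\alpha a_j(x)|\lesssim\ell^\alpha$ on $2Q$, while the $L$ vanishing moments of $a_j$, through a Taylor expansion of the kernel around $c$, give the decay $|I_\alpha a_j(x)|\lesssim\ell^{\,n+L+1}|x-c|^{-(n+L+1-\alpha)}$ for $x\notin2Q$. Coupling these with the defining bound $\int_{2^{k+1}Q}|g|\le(2^{k+1}\ell)^{n-\alpha}\|g\|_{{\mathcal M}_{1,n-\alpha}}$ yields
\[
\int_{A_k}|g\cdot I_\alpha a_j|\lesssim2^{-k(L+1)}\,\ell^{\,n}\,\|g\|_{{\mathcal M}_{1,n-\alpha}}\qquad(k\ge0).
\]
Thus $(\ell^{\,n}\|g\|_{{\mathcal M}_{1,n-\alpha}})^{-1}\,g\cdot I_\alpha a_j$ is concentrated on $Q$ with amplitude comparable to that of $\chi_Q$ and decays superpolynomially over the dilates $2^kQ$ as soon as $L$ is large. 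After renormalising by $\eta(Q)$, with $\eta\in{\mathcal G}_1$ chosen so that the couple $(\phi,\eta)$ satisfies $(\ref{eq:140327-11156})$ (the example after Theorem \ref{thm:131108-2} shows such $\eta$ exists for $\phi(t)=t^{\lambda/p-n/p}$), this is exactly a molecule of the type the molecular synthesis theorem is designed to absorb. Summing over $j$ and inserting the coefficient bound gives $\|g\cdot I_\alpha f\|_{H{\mathcal M}_{p,\lambda}}\lesssim\|g\|_{{\mathcal M}_{1,n-\alpha}}\|f\|_{H{\mathcal M}_{p,\lambda}}$.

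The main obstacle is the molecular synthesis theorem for $H{\mathcal M}_{p,\lambda}({\mathbb R}^n)$ itself: the building blocks $g\cdot I_\alpha a_j$ carry no exact vanishing moments, since both multiplication by $g$ and the action of $I_\alpha$ destroy them, so one cannot feed them into Theorem \ref{thm:131108-2} directly. The remedy is to run a Taibleson--Weiss-type decomposition of each molecule into genuine atoms supported on the dilates $2^kQ_j$ and then check that the resulting coefficients still obey the Morrey coefficient bound: the superpolynomial gain $2^{-k(L+1)}$ must be shown to beat the geometric growth of $\|\chi_{2^kQ_j}\|_{{\mathcal M}_{p,\lambda}}$ (controlled via $(\ref{eq:141006-1})$ and the $A_1$-weighted inequality of Lemma \ref{lem:140327-1}), and this is precisely where the restriction $\phi\in{\mathcal G}_1$ and the freedom to take $v$ small in Theorem \ref{thm:2} enter, just as in the correction to \cite{IST14}. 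An alternative route, paralleling the proof of the preceding theorem, would bypass atoms altogether: use the Littlewood--Paley characterisation of $H{\mathcal M}_{p,\lambda}({\mathbb R}^n)$ from Theorem \ref{thm:LP}, a Hedberg-type pointwise bound for $I_\alpha$, and the Morrey bound on $g$ to reproduce the argument of \cite{SaSuTa09-2}; there the difficulty is to propagate the square-function estimate through the nonlinear Hedberg inequality, which is handled by the vector-valued maximal inequalities of Section \ref{s2}.
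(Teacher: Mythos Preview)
Your proposal is correct and follows essentially the same approach as the paper, which simply defers to \cite[Theorem~1.7]{IST14}: decompose $f$ atomically via Theorem~\ref{thm:2}, control each $g\cdot I_\alpha a_j$ as a molecule adapted to $Q_j$ using the moment conditions of $a_j$ and the Morrey bound on $g$, and reassemble via the synthesis Theorem~\ref{thm:131108-2}. Your identification of the moment-condition obstacle and its resolution---either by a Taibleson--Weiss splitting of the molecules into atoms on dilates, or by observing that in the range $\phi\in\mathcal{G}_1$ (equivalently $\lambda\ge n(1-p)$) no vanishing moments are needed for membership in $H\mathcal{M}_{p,\lambda}$---is exactly the correction to \cite{IST14} that the present paper supplies.
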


\begin{proof}
We argue as we did in \cite[Theorem 1.7]{IST14}.
\end{proof}

\section{Acknowledgement}

The research of V. Guliyev was partially supported 
by the grant of Science Development Foundation under the President of the Republic of Azerbaijan,
Grant EIF-2013-9(15)-46/10/1 and by the grant of Presidium Azerbaijan National Academy of Science 2015.
A. Akbulut was partially supported by the grant of Ahi Evran University Scientific Research Projects
(PYO.FEN.4003.13.004) and (PYO.FEN.4003/2.13.006).
This paper is written during the stay of Y. Sawano in Ahi Evran University and Beijing Normal University.
Y. Sawano is thankful to Ahi Evran University and Beijing Normal University for this support of the stay there.
The authors are indepted to Mr. S. Nakamura
for his hint to improve (\ref{eq:140327-1115}).


\begin{thebibliography}{99}
\bibitem{AkbGulMus1}
A. Akbulut, V.S. Guliyev and R. Mustafayev,
"On the Boundedness of the maximal operator and singular
integral operators in generalized Morrey spaces,"
\textit{Math. Bohem.} {\bf 137} (1) (2012), 27--43.

\bibitem{AndersenJohn}
K. F. Andersen and R. T. John,
"Weighted inequalities for vecter-valued maximal functions and singular
integrals,"
\textit{Studia Math}. {\bf 69} (1980), 19--31.

\bibitem{BurGogGulMus1}
V. Burenkov, A. Gogatishvili, V.S. Guliyev and R. Mustafayev,
"Boundedness of the fractional maximal operator in local Morrey-type spaces,"
\textit{Complex Var. Elliptic Equ.} {\bf 55} (8-10) (2010), 739--758.

\bibitem{CF}
F.~Chiarenza and M.~Frasca,
"Morrey spaces and Hardy-Littlewood maximal function,"
\textit{Rend. Mat.} {\bf 7} (1987), 273--279.

\bibitem{EGU}
Eridani, H. Gunawan and M.I. Utoyo,
``A characterization for fractional integral operators
on generalized Morrey spaces'',
\textit{Anal. Theory Appl.} {\bf 28} (2012),
No. 3, 263--267.

\bibitem{EGNS}
Eridani, H. Gunawan, E. Nakai and Y. Sawano,
Characterizations for the generalized fractional
integral operators on Morrey spaces,
\textit{Math. Ineq. and Appl.} {\bf 17}, No. 2 (2014), 761--777.


\bibitem{FeffSt}
C. Fefferman and E. Stein,
"Some maximal inequalities",
\textit{Amer J. Math.} {\bf 93} (1971), 105--115.

\bibitem{GHS-pre}
H. Gunawan, D. Hakim and Y. Sawano,
"Nonsmooth atomic decompositions for generalized Orlicz-Morrey spaces,"
to appear in {\textit Math. Nachr.}.

\bibitem{GRST14}
S.~Gala, A.M. Ragusa, Y.~Sawano, and H.~Tanaka,
Uniqueness criterion of weak solutions for the dissipative quasi-geostrophic equations in Orlicz-Morrey spaces,
\textit{Appl. Anal.} {\bf 93} (2014), no. 2, 356--368.

\bibitem{GST12}
S.~Gala, Y.~Sawano, and H.~Tanaka,
A new Beale-Kato-Majda criteria
for the 3D magneto-micropolar fluid equations in the Orlicz-Morrey space,
\textit{Math. Methods Appl. Sci.} {\bf 35} (2012), no. 11, 1321--1334.

\bibitem{GST13}
S.~Gala, Y.~Sawano and H.~Tanaka,
On the uniqueness of weak solutions of the 3D MHD equations
in the Orlicz-Morrey space,
\textit{Appl. Anal.} {\bf 92} (2013), no. 4, 776--783.

\bibitem{GulDoc}
 V.S. Guliyev,
"Integral operators on function spaces
on the homogeneous groups and
 on domains in $\Rn$". Doctor's degree dissertation,
\textit{Mat. Inst. Steklov, Moscow,} 1994, 329 pp. (in Russian)

\bibitem{GulBook}
V.S. Guliyev, "Function spaces, Integral Operators and Two Weighted
 Inequalities on Homogeneous Groups. Some Applications,"
 Cashioglu, Baku, 1999, 332 pp. (in Russian)


\bibitem{GulJIA}
V.S. Guliyev,
"Boundedness of the maximal, potential and singular operators in the generalized Morrey spaces,"
\textit{J. Inequal. Appl.} 2009, Art. ID 503948, 20 pp.

\bibitem{GulJMS2013}
V.S. Guliyev,
"Generalized local Morrey spaces
and fractional integral operators with rough kernel,"
\textit{J. Math. Sci.} (N. Y.) {\bf 193} No. 2, (2013) 211--227.

\bibitem{GulAJM2013}
V.S. Guliyev,
"Local generalized Morrey spaces and singular integrals with rough kernel,"
\textit{Azerbaijan Journal of Mathematics,} {\bf 3} (2) (2013), 79--94.

\bibitem{IST14}
T. Iida, Y. Sawano, and H. Tanaka,
"Atomic decomposition for Morrey spaces,"
{\it Z. Anal. Anwendungen} {\bf 33} (2014) 2, 149--170.


\bibitem{JW}
H.~Jia and H.~Wang,
"Decomposition of Hardy-Morrey spaces,"
\textit{J. Math. Anal. Appl.} {\bf 354} (2009), no. 1, 99--110.

\bibitem{Mi} 
T. Mizuhara,
\emph{Boundedness of some classical operators on generalized Morrey spaces}, Harmonic Anal., Proc. Conf., Sendai/Jap. 1990, ICM-90 Satell. Conf. Proc.,  183--189, 1991.

\bibitem{Nakai94}
E.~Nakai,
"Hardy--Littlewood maximal operator,
singular integral operators, and the Riesz potential
on generalized Morrey spaces,"
\textit{Math. Nachr.} {\bf 166}, 1994, 95--103.


\bibitem{Nakai00}
E.~Nakai,
"A characterization of pointwise multipliers
on the Morrey spaces,"
\textit{Sci. Math.} {\bf 3} (2000), 445--454.

\bibitem{NaSa2012}
E.~Nakai and Y.~Sawano,
"Hardy spaces with variable exponents
and generalized Campanato spaces,"
\textit{J.~Funct.~Anal.}, (2012) {\bf 262}, 3665--3748.

\bibitem{NaSa-pre2013}
E.~Nakai and Y.~Sawano,
"Orlicz-Hardy spaces and their duals,"
\textit{Sci. China Math.} {\bf 57} (2014), no. 5, 903--962.

\bibitem{Olsen95}
P.~Olsen,
Fractional integration,
Morrey spaces and Schr\"{o}dinger equation,
\textit{Comm. Partial Differential Equations,} {\bf 20} (1995),
2005--2055.


\bibitem{PlPo37}
M.~Plancherel and M.G.~P\'{o}lya, 
Fonctions enti\`{e}res et int\'{e}grales de fourier multiples, 
(French){\it Comment. Math. Helv. }{\bf 10} (1937), no. 1, 110--163. 

\bibitem{s08-2} Y. Sawano,
"Wavelet characterization of Besov-Morrey and
Triebel-Lizorkin-Morrey spaces,"
\textit{Funct. Approx. Comment. Math.} {\bf 38} (2008), part 1, 93--107.

\bibitem{Sa08-1}
Y.~Sawano,
"Generalized Morrey Spaces for non-doubling measures,"
\textit{Nonlinear Differ. Equ. Appl.,} {\bf 15} (2008), 413--425.

\bibitem{Sa13}
Y.~Sawano,
"Atomic decompositions
of Hardy spaces with variable exponents
and its application to bounded linear operators,"
\textit{Integral Equations Operator Theory } {\bf 77} (2013), no. 1, 123--148.

\bibitem{SaTa1}
Y.~Sawano and H.~Tanaka,
"Morrey spaces for non-doubling measures,"
\textit{Acta Math. Sinica,} {\bf 21} (2005), no. 6, 1535--1544.

\bibitem{SaTa2007}
Y.~Sawano and H.~Tanaka,
"Decompositions of Besov-Morrey spaces and Triebel-Lizorkin-Morrey spaces,"
\textit{Math. Z.} {\bf 257} (2007), no. 4, 871--905.


\bibitem{SaSuTa09-2}
Y.~Sawano, S.~Sugano and H.~Tanaka,
"Identification of the image of Morrey spaces
by the fractional integral operators,"
Proc.~A.~\textit{Razmadze Math.~Institute}, {\bf 149}, (2009), 87--93.


\bibitem{SaSuTa11-1}
Y.~Sawano, S.~Sugano and H.~Tanaka,
"Generalized fractional integral operators
and fractional maximal operators in the framework of Morrey spaces,"
\textit{Trans. Amer. Math. Soc.,} {\bf 363} (2011), no. 12, 6481--6503.

\bibitem{SaSuTa11-2}
Y.~Sawano, S.~Sugano and H.~Tanaka,
"Olsen's inequality and its applications to Schr\"{o}dinger equations,"
\textit{Suurikaiseki K$\hat{o}$ky$\hat{u}$roku Bessatsu} {\bf B26},
(2011) 51--80.

\bibitem{SST12-2}
Y.~Sawano, S.~Sugano and H.~Tanaka,
"Orlicz-Morrey spaces and fractional operators,"
\textit{Potential Anal.,} {\bf 36}, No. 4 (2012), 517--556.

\bibitem{SW}
Y. Sawano and H. Wadade,
"On the Gagliardo-Nirenberg type inequality
in the critical Sobolev-Morrey space,"
\textit{J.~Fourier Anal.~Appl.}, {\bf 19} (2013), no. 1, 20--47.

\bibitem{SSG10}
I.~Sihwaningrum, H.P.~Suryawan and H.~Gunawan,
"Fractional integral operators and Olsen inequalities
on non-homogeneous spaces,"
\textit{Aust. J. Math. Anal. Appl.} {\bf 7} (2010),
no. 1, Art. 14, 6 pp.

\bibitem{Sugano11}
S.~Sugano,
"Some inequalities for generalized fractional integral operators
on generalized Morrey spaces,"
\textit{Math. Inequal. Appl.}, {\bf 14} (2011), no.4, 849--865.

\bibitem{SuTa03}
S.~Sugano and H.~Tanaka,
"Boundedness of fractional integral operators on generalized Morrey spaces,"
\textit{Sci. Math. Jpn.}, {\bf 58} (2003), no. 3, 531--540
(Sci. Math. Jpn. Online {\bf 8} (2003), 233--242).

\bibitem{Stein}
E.M. Stein,
Singular Integrals and Differentiability Properties
of Functions, Princeton Univ. Press, 1970.

\bibitem{Stein1993}
E.M.~Stein,
"Harmonic Analysis, real-variable methods, orthogonality,
and oscillatory integrals,"
\textit{Princeton University Press}, Princeton, NJ, 1993.

\bibitem{Tanaka10}
H.~Tanaka,
"Morrey spaces and fractional operators,"
\textit{J. Aust. Math. Soc.} {\bf 88} (2010), no. 2, 247--259.

\bibitem{TaXu}
L.~Tang and J.~Xu,
"Some properties of Morrey type Besov-Triebel spaces,"
\textit{Math. Nachr } {\bf 278} (2005), no 7-8, 904--917.

\bibitem{Trieb1} H. Triebel,
"Theory of Function Spaces," \textit{Birkhauser,} Basel, 1983.


\bibitem{UNW12-1}
M.I.~Utoyo, T.~Nusantara and B.S.~Widodo,
"Fractional integral operator and Olsen inequality
in the non-homogeneous classic Morrey space,"
\textit{Int. J. Math. Anal.} (Ruse) {\bf 6} (2012), no. 29--32, 1501--1511.


\bibitem{ysy-textbook} W. Yuan, W. Sickel and D. Yang,
"Morrey and Campanato Meet Besov, Lizorkin and Triebel,"
\textit{Lecture Notes in Mathematics,} 2005,
Springer-Verlag, Berlin, 2010, xi+281 pp.


\bibitem{Zo}C. Zorko,
"Morrey space,"
\textit{Proc. Amer. Math. Soc.} {\bf 98} (1986), no. 4, 586--592.


\end{thebibliography}
\end{document}